\tikzstyle{decision} = [diamond, draw, fill=white,
\tikzstyle{block} = [rectangle, draw, fill=white,
\tikzstyle{line} = [draw, -latex']
\tikzstyle{cloud} = [draw, ellipse,fill=white, node distance=3cm,
\newtheorem{thm}{Theorem}[section]
 \newtheorem{cor}[thm]{Corollary}
 \newtheorem{lem}[thm]{Lemma}
 \newtheorem{prop}[thm]{Proposition}
 \theoremstyle{definition}
 \newtheorem{defn}[thm]{Definition}
 \theoremstyle{remark}
 \theoremstyle{notation}
 \numberwithin{equation}{section}
\newcommand{\eps}{\epsilon}
\newcommand{\beq}{\begin{equation}}
\newcommand{\eeq}{\end{equation}}
\newcommand{\imn}{\mathbf{i}}
\newcommand{\gdim}{\mathbf{n}}
  \newcommand{\s}{{S}}
 \newcommand{\A}{\mathcal{A}}
\newcommand{\f}{\mathcal{F}}
\newcommand{\I}{\mathcal{I}}
  \newcommand{\h}{\mathfrak{h}}
 \newcommand{\bneg}{\mathfrak{b}}
\newcommand{\nneg}{\mathfrak{n}}
 \newcommand{\g}{\mathfrak{g}}
 \newcommand{\Ad}{\textrm{Ad}}
 \newcommand{\ad}{\textrm{ad}}
 \newcommand{\Z}{\mathbb{Z}}
 \newcommand{\Lie}{\mathfrak{L}}
 \newcommand{\q}{z}
\newcommand{\gauge}{\mathcal{N}}
 \newcommand{\lop}[1]{\mathfrak{L}(#1)}
\newcommand{\bil}[2]{{\langle #1 | #2\rangle}}
\newcommand{\npos}{\mathfrak{n}^+}
\newcommand{\wdt}[1]{\widetilde{#1}}
\newcommand{\wht}[1]{\widehat{#1}}
\begin{document}

\title[Frobenius manifolds and  $W$-algebras]
 {Frobenius manifolds from subregular classical $W$-algebras }

\author[Yassir Dinar]{Yassir Ibrahim Dinar }

\address{Faculty of Mathematical Sciences, University of Khartoum,  Sudan. Email: dinar@ictp.it.}
\email{dinar@ictp.it}
\subjclass[2000]{Primary 37K10; Secondary 35D45}

\keywords{ Integrable systems, Frobenius
manifolds, Classical $W$-algebras, Drinfeld-Sokolov reduction, Dirac reduction, Slodowy slice,  Simple hypersurface singularity }

\begin{abstract}
We obtain  algebraic  Frobenius manifolds from classical $W$-algebras associated to subregular  nilpotent elements in simple Lie algebras of type $D_r$ where $r$ is even and $E_r$. The resulting  Frobenius manifolds  are  certain  hypersurfaces in the total spaces of semiuniversal deformation of simple hypersurface singularities  of the same types.
\end{abstract}
\maketitle
\tableofcontents

\section{Introduction}

A \textbf{Frobenius manifold} $M$ is a manifold with the structure of Frobenius algebra on the tangent space $T_zM$ at any point $z\in M$ with a  flat invariant bilinear form $(.,.)$ and an identity $e$ plus some compatibility conditions \cite{DuRev}. We say $M$ is semisimple or massive if $T_zM$ is semisimple for generic $z$.  Locally, in the flat coordinates $(t^1,...,t^r)$,  the structure of Frobenius manifold  is encoded  in a potential $F(t^1,...,t^r)$ satisfying  a system of partial differential equations  known in topological field theory as the  Witten-Dijkgraaf-Verlinde-Verlinde (WDVV) equations. We consider Frobenius  manifolds  where the quasihomogeneity condition takes the from
\begin{equation}
\sum_{i=1}^r d_i t_i \partial_{t^i} \mathbb{F}(t) = \left(3-d \right) \mathbb{F}(t)
\end{equation}
where  \textbf{$e=\partial_{t^{r-1}}$ and $d_{r-1}=1$}. This condition defines {\bf the degrees} $d_i$ and {\bf the charge} $d$ of   $M$. If $\mathbb{F}(t)$ is an algebraic function we call $M$ an
\textbf{algebraic Frobenius manifold}. \textbf{Dubrovin conjecture} on classification of algebraic Frobenius manifolds is stated as follows:
semisimple  irreducible algebraic Frobenius manifolds
with positive degrees $d_i$ correspond to quasi-Coxeter (primitive)  conjugacy
classes in irreducible Coxeter groups. A quasi-Coxeter conjugacy class in an irreducible Coxeter group is  a Conjugacy class  which has no representative in a proper Coxeter subgroup \cite{CarClassif}.

There are two major  results  support  the conjecture. First, the  conjecture arises from studying the algebraic solutions to associated equations of isomonodromic deformation of algebraic Frobenius manifolds \cite{DuRev},\cite{DMD}. It  leads to quasi-Coxeter conjugacy classes in Coxeter groups by considering the  classification of  finite orbits of the braid group action on tuple of reflections obtained in \cite{STF}.  Therefore, it remains the problem of constructing  all these algebraic  Frobenius manifolds. Second, Dubrovin constructed polynomial Frobenius structures on the orbit spaces of  Coxeter groups \cite{DCG} using the work of \cite{saito}. Then  Hertling  \cite{HER} proved that  these are all possible \textbf{polynomial Frobenius manifolds}. The isomonodromic deformation of Polynomial Frobenius manifolds lead to   Coxeter conjugacy classes  \cite{DuRev}.

The classification of polynomial Frobenius manifolds reveals a relation between the order and eigenvalues of the  conjugacy class, and the charge and degrees of the corresponding Frobenius manifold. More precisely, If the order of the conjugacy class is $\kappa+1$ and the eigenvalues are $\exp {2\eta_i \pi \imn\over \kappa+1}$ then the charge of the Frobenius manifold is $\kappa-1\over \kappa+1$ and the degrees are $\eta_i+1\over \kappa+1$.  We depend on this \textbf{weak relation} in considering  a new examples of algebraic Frobenius manifolds.

 In \cite{mypaper} we continue the work of  \cite{PAV} and we began to develop a construction of algebraic Frobenius manifolds using  Drinfeld-Sokolov reduction. This means we restrict ourself to conjugacy classes in Weyl groups. The examples  obtained  correspond, in the notations of \cite{CarClassif},  to the conjugacy classes $D_4(a_1)$ and $F_4(a_2)$. In \cite{mypaper1} we  succeeded to uniform the  construction of  all polynomial  Frobenius manifolds.   In this work we uniform the   construction of  algebraic Frobenius manifolds which  correspond to  the conjugacy classes $D_r(a_1)$ where $r$ is even and $E_r(a_1)$.

 In order to formulate the main results of this work, let us recall the relation between subregular nilpotent elements and deformation of simple hypersurface singularities \cite{sldwy2}. Let $\g$ be a simple Lie algebra of  type  $D_r$ where $r$ is even or $E_r$. Fix a \textbf{subregular nilpotent element} $e$  in  $\g$.  By definition a nilpotent element is called subregular if its centralizer in $\g$ is of  dimension $r+2$. We fix, by using the Jacobson-Morozov theorem,   a semisimple element $h$ and a nilpotent element $f$ such that $\A=\{e,h,f\}$ is an $sl_2$-triple, i.e
 \begin{equation}
 [h,e]=2 e;~~~[h,f]=-2f;~~~[e,f]=h.
 \end{equation}
  The action of $\A$ decompose $\g$ to $r+2$ irreducible $\A$-submodules
\begin{equation}
\g=\bigoplus_{i=1}^{r+2} V^i
\end{equation}
Let $\dim V^i=2\eta_i+1$. We call the set
\begin{equation}
Wt(e)=\{\eta_i: i=1,...,r+2\}
\end{equation}
\textbf{the weights of $e$}. Let $\eta_0+1$ be the Coxeter number of $\g$. The set $Wt(e)$, under our choice of a total  order, together with $\eta_0$ are given in the following table.
\begin{center}
\begin{tabular}{|c|c|c|c|c|}\hline
$\g$ & & \multicolumn{3}{|c|}{$Wt(e)$} \\
\hline & $\eta_0$& $\eta_1,\ldots,\eta_{r-1}$& $\eta_r$& $\eta_{r+1},\eta_{r+2}$ \\
\hline
$D_r$ & 2r-3& $1,3,\ldots,r-1;r-1,r+1,\ldots, 2r-5$ & $\textbf{r-3}$& 1 ,r-2\\
$E_6$&11 & $1,4,5,7,8$ & $\textbf{2}$&$3,5$\\
$E_7$ &17& $1,5,7,9,11,13$ & $\textbf{3}$& $5,8$\\
$E_8$ &29& $1,7,11,13,17,19,23$ & $\textbf{5}$& $9,14$\\
\hline  & \multicolumn{2}{|c|}{$Et(\g)$}& & \\\hline
\end{tabular}
\end{center}
We observe that the set $Et(\g)$ of the exponents of $\g$ is
\begin{equation}
Et(\g)=\{\eta_i: i=0,...,r-1\}.
\end{equation}

 Let  $G$ be the adjoint group of $\g$. By  Chevalley theorem, the algebra $S(\g^*)^{G}$ of invariant polynomials under the adjoint  action of $G$ is generated by  $r$ homogenous polynomials of degrees $\eta_i+1,~i=0,...,r-1$. We fix a homogenous generator $\chi^0,...,\chi^{r-1}$   of the algebra $S(\g^*)^{G}$ with degree $\chi^i$ equals $\eta_i+1$. Then let us consider the \textbf{adjoint quotient} map   \beq \chi=(\chi^0,\ldots,\chi^{r-1}):\g\to \mathbb{C}^r\eeq
 and define the \textbf{Slodowy slice}  \beq Q=e+\g^f;~~~\g^f:=\ker \ad~f.\eeq
 Then Brieskorn proved  that the restriction of  $\chi$ to $Q$ is semiuniversal  deformation of the simple hypersurface singularity $\mathcal{N}\cap Q$ \cite{bri} which is of the same type as $\g$.

Let us introduce the  following coordinates on  $Q$
\[\sum_{i=1}^{r+2} z^i X_{-\eta_i}^i+e\in {Q}\]
where $X_{-\eta_i}^i$ is a normalized  minimal weight vector of $V^i$.  We assign the degree $ 2 \eta_i+2$ to $z^i$. Then a simple modification of the work in \cite{sldwy1} we get the following
\begin{prop}
The map $\chi|_{Q}$ has rank $r-1$ at $e$. We can normalize  the  modules $V^i$ and choose  a homogenous generators  $\chi^0,...,\chi^{r-1}$   for  $S(\g^*)^{G}$  such that the restriction $t^i$ of $\chi^i$ to $Q$  with $i>0$ take the form
 \beq\label{inv gene}
t^i=z^i+ ~ {\textrm{ non linear terms}}.
 \eeq
In particular, setting \beq
t^{r+i}=z^{r+i},~i=0,1,2\eeq  we get a quasihomogenous coordinates $ (t^1,...,t^{r+2})$
 on $Q$ with degree $t^i$ equals degree $z^i$.
\end{prop}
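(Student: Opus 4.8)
The plan is to exploit the $\mathbf{sl_2}$-grading together with the $G$-invariance of $S(\g^*)^G$ to control the leading terms of the restricted invariants. First I would set up the weight bookkeeping carefully. The semisimple element $h$ induces a $\Z$-grading on $\g$, and since $e \in V^i$ has weight $2\eta_i$ as an $\A$-module element of highest weight, each $V^i$ is spanned by weight vectors of $h$-weights $-2\eta_i, -2\eta_i+2, \dots, 2\eta_i$. The coordinate $z^i$ is dual to the minimal weight vector $X^i_{-\eta_i}$, i.e.\ the lowest-weight vector, which sits at $h$-weight $-2\eta_i$; assigning $z^i$ degree $2\eta_i+2$ is precisely the convention that makes a general element $\sum z^i X^i_{-\eta_i}+e$ of $Q$ transform with a definite weight under the $\Complex^*$-action combining the grading automorphism $\exp(s\cdot\ad h)$ with scaling. (Here the shift by $2$ comes from the $e$ summand, which forces all of $Q$ to live in the relevant affine translate; this is the standard Kostant/Slodowy device.) I would make this $\Complex^*$-action explicit and verify that under it $z^i \mapsto \lambda^{2\eta_i+2} z^i$, so that a polynomial on $Q$ is quasihomogeneous in the stated degrees iff it is genuinely homogeneous for this torus action.

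Next, the rank statement. The differential of $\chi|_Q$ at $e$ is $d_e\chi$ restricted to $T_e Q = \g^f$. Now $d_e\chi$ as a map $\g \to \Complex^r$ has kernel exactly the tangent space to the $G$-orbit of $e$, which is $[\g,e]=\operatorname{im}(\ad e)$; this is Kostant's description of the differential of the adjoint quotient, valid because $e$ lies in the regular locus only generically — but for subregular $e$ the corank of $d_e\chi$ is the codimension of the orbit-closure inside the fiber, which is $1$ for a subregular (hence "subregular" = codimension-one-in-the-nilpotent-cone) element. More concretely, $\dim\operatorname{im}(d_e\chi) = r - \dim(\ker d_e\chi \cap \text{something})$; I would instead argue directly: $\g = \operatorname{im}(\ad e) \oplus \g^f$ is the standard $\mathfrak{sl}_2$ Lefschetz-type decomposition, so $d_e\chi|_{\g^f}$ has the same image as $d_e\chi$ on all of $\g$, and that image has dimension equal to $r$ minus the corank of $d_e\chi$ on $\g$. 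Since $e$ is subregular its centralizer has dimension $r+2$, so the orbit has codimension $r+2$ in $\g$, i.e.\ dimension $\dim\g - (r+2)$, and a Jacobian-rank computation (e.g.\ via the Kostant section comparison, or directly using that the generic fiber of $\chi$ has dimension $\dim\g - r$ while the fiber through $e$ contains the $(\dim\g-r-1)$-dimensional orbit of $e$ together with the regular orbit in its closure) gives corank exactly $1$. Hence $\chi|_Q$ has rank $r-1$ at $e$. This is the step I expect to be the main obstacle to write cleanly, because it requires being honest about which of the $r$ differentials $d_e\chi^0,\dots,d_e\chi^{r-1}$ degenerate; the cleanest route is to invoke the cited computation in \cite{sldwy1} as the paper already signals ("a simple modification of the work in \cite{sldwy1}").

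Finally, the normalization of the $V^i$ and the choice of generators. After the rank computation we know $d_e\chi^0,\dots,d_e\chi^{r-1}$ span an $(r-1)$-dimensional space; reordering so that $d_e\chi^0 = 0$ on $\g^f$ (this is the exponent-$\eta_0$ generator, of top degree, matching the boldface entry $\eta_r$ in the table being the "missing" one — I would check the degree bookkeeping against the table to confirm which index drops out), the remaining $d_e t^i$, $i=1,\dots,r-1$, are linearly independent linear functionals on $\g^f = \bigoplus V^i \cap \g^f = \bigoplus \Complex X^i_{-\eta_i}$. By a degree argument: $t^i = \chi^i|_Q$ is quasihomogeneous of degree $\eta_i+1$ in the $z$'s weighted by $2\eta_j+2$; its linear part is a combination of those $z^j$ with $2\eta_j+2 = \eta_i+1$, which (comparing with the table, where the $\eta_j$ for $j\le r-1$ are distinct and the coincidences with $\eta_{r},\eta_{r+1},\eta_{r+2}$ are tracked) forces the linear part of $t^i$ to be a multiple of $z^i$ plus possibly a multiple of one of $z^r, z^{r+1}, z^{r+2}$. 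A linear change of coordinates among the minimal weight vectors $X^j_{-\eta_j}$ of equal degree — i.e.\ a renormalization of the modules $V^i$ — together with the freedom to take $\Complex$-linear combinations of the $\chi^i$ of equal degree among themselves, can be used to clear these and arrange $t^i = z^i + (\text{higher order})$ with the higher-order terms automatically of the same total degree $2\eta_i+2$ by quasihomogeneity. Setting $t^{r+i}=z^{r+i}$ for $i=0,1,2$ then completes a coordinate system on $Q$ in which all coordinates are quasihomogeneous with $\deg t^i = \deg z^i$, proving the proposition.
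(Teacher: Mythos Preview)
Your overall strategy is sound and close to the paper's, but there is one concrete slip and one genuine methodological difference worth flagging.

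First the slip: you write that $t^i=\chi^i|_Q$ is quasihomogeneous of degree $\eta_i+1$ and then match linear parts via $2\eta_j+2=\eta_i+1$. That equation is never satisfied. The degree of $\chi^i$ as an element of $S(\g^*)^G$ is $\eta_i+1$, but under the $\mathbb{C}^*$-action on $Q$ that you yourself set up (the one making $z^j$ have weight $2\eta_j+2$) the restriction $t^i$ becomes quasihomogeneous of degree $2\eta_i+2$; this is exactly the content of the proposition immediately preceding the one you are proving. So the linear part of $t^i$ is a combination of those $z^j$ with $\eta_j=\eta_i$, and the rest of your argument goes through with that correction. Your identification of $\chi^0$ as the degenerating invariant is correct for the right reason: $\eta_0$ exceeds every element of $Wt(e)$, so no $z^j$ can appear linearly in $t^0$.

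Second, the route you take diverges from the paper's in a real way. You argue uniformly: rank $r-1$ (deferred to Slodowy), then degree-matching on linear parts, then use the freedom to renormalize the $V^i$ among equal-weight summands together with the freedom to take linear combinations of equal-degree generators $\chi^i$ to force $t^i=z^i+(\text{nonlinear})$. The paper instead quotes a sharper statement from Slodowy (section 8.3 of \cite{sldwy1}): one can complete $t^1,\ldots,t^{r-1}$ to a coordinate system on $Q$ by adjoining three of the $z^j$, of degrees $2\eta_r+2,2\eta_{r+1}+2,2\eta_{r+2}+2$. The paper then keeps the normalization of the $V^i$ \emph{fixed} (the one from the earlier proposition with the orthogonality relations) and checks type by type that the three adjoined coordinates are exactly $z^r,z^{r+1},z^{r+2}$: trivially for $E_8$ since all weights in $Wt(e)$ are distinct; for $D_r$ because $\chi^1|_Q$ is a scalar multiple of $z^1$; and for $E_6,E_7$ by a direct check that the relevant invariant depends on $z^3$, respectively $z^2$. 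Your argument buys uniformity at the cost of possibly perturbing the module normalization, which later parts of the paper (the opposite Cartan subalgebra, the identification $u_i=-X^i_{\eta_i}$) rely on; the paper's case-by-case check avoids touching that normalization.
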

We will call the coordinates $(t^1,\ldots, t^{r+2})$ on $Q$ obtained in this proposition  \textbf{Slodowy coordinates}. In this coordinates  the restriction of the quotient map to $Q$ take the form
\beq
\chi|_{Q}:(t^1,....,t^{r+2})\mapsto(t^0,t^1,...,t^{r-1})
\eeq
where $t^0$ is the restriction to ${Q}$ of the invariant polynomial $\chi^0$.

Let us consider the Lie-Poisson bracket $P$ on $\g$. The Dirac reduction of $P$ to $Q$ give a nontrivial Poisson bracket $\{.,.\}^{Q}$. It is know in the literature as the adjoint transverse Poisson structure to the nilpotent orbit of $e$. In \cite{DamSab} they prove the following
\begin{thm}
 The  matrix $F^{ij}(t)=\{t^i,t^j\}^{Q}$  is constant multiple of the matrix
\beq\begin{pmatrix}
  0 & 0 &  \\
  0 &  \Omega \\
\end{pmatrix}
\eeq
where $\Omega$ is a $3 \times 3$ matrix of the form
\beq\begin{pmatrix}

  0& {\partial t^0 \over \partial t^{r+2}} & -{\partial t^0\over \partial t^{r+1}} \\
 -{\partial  t^0\over \partial t^{r+2}} & 0 & {\partial t^0\over \partial t^{r}} \\
 {\partial  t^0\over \partial t^{r+1}} &- {\partial  t^0\over \partial t^{r}} & 0 \\
\end{pmatrix}.
\eeq
\end{thm}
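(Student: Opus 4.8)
The plan is to realize the Dirac-reduced bracket $\{.,.\}^{Q}$ concretely by computing Dirac's constraint formula, and then to exploit the invariance properties of the coordinates $t^i$ under the residual symmetry to pin down the matrix $F^{ij}$. First I would recall that the Lie--Poisson bracket on $\g$ reads $\{g_1,g_2\}(x)=\langle x,[dg_1,dg_2]\rangle$, and that the Slodowy slice $Q=e+\g^f$ is cut out inside $\g$ by the constraints dual to a complement of $\g^f$, namely $[\g,f]$; by $\mathfrak{sl}_2$-theory $\g=\g^f\oplus[\g,f]$, so the constraint functions are linear coordinates along $[\g,f]$. The Dirac bracket of the restrictions is then $\{t^i,t^j\}^{Q}=\{t^i,t^j\}-\{t^i,\phi_a\}(C^{-1})^{ab}\{\phi_b,t^j\}$ where $C_{ab}=\{\phi_a,\phi_b\}$ is the (invertible on $Q$, by the slice property) matrix of Poisson brackets of the constraints. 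This is the same setup used in \cite{DamSab}; I would quote their reduction or redo it in the present coordinates.

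Next, the key structural input is \emph{weighted-degree bookkeeping combined with invariance}. Since $\chi^0,\dots,\chi^{r-1}$ are $G$-invariant, their differentials annihilate the coadjoint action, so any Poisson bracket $\{t^i,t^j\}$ with $0\le i,j\le r-1$ vanishes identically even before reduction, and the Dirac correction of two such also vanishes because $\{t^i,\phi_a\}$ involves only the "transverse" directions; hence the top-left $(r-1)\times(r-1)$ block $F^{ij}$, $i,j\in\{1,\dots,r-1\}$, is zero. The next step is to see that the mixed block $F^{i\,r+k}$ (with $1\le i\le r-1$, $k\in\{0,1,2\}$) and all entries involving $t^0$ collapse onto derivatives of the single function $t^0$: because $\chi^0$ has the top degree $\eta_0+1$ (the Coxeter number), it is the unique invariant whose restriction to $Q$ genuinely depends on the last three Slodowy coordinates $t^{r},t^{r+1},t^{r+2}$, while $t^1,\dots,t^{r-1}$ are functions of $t^1,\dots,t^{r-1}$ alone up to the normalization in Proposition~\ref{inv gene} — that is exactly why $\chi|_Q$ has rank $r-1$. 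Running the Dirac formula with this triangular dependence forces every nonzero entry of the reduced matrix to be a derivative $\partial t^0/\partial t^{r+\ell}$, and antisymmetry plus the three-dimensionality of the "fiber" directions $\{t^{r},t^{r+1},t^{r+2}\}$ leaves precisely the $3\times3$ skew matrix $\Omega$ displayed, up to an overall constant coming from the normalization of the minimal-weight vectors $X^i_{-\eta_i}$ and of the invariants.

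Concretely, the computation I would carry out is: choose a basis of $[\g,f]$ adapted to the $\mathfrak{sl}_2\times$(weight) grading; write $C_{ab}$ and observe it is block-lower-triangular with invertible diagonal because $[e,\cdot]:[\g,f]\to[\g,f]$ shifts weights; invert it order by order in the grading; then substitute into the Dirac formula and read off, using $d\chi^0$ evaluated at a point $e+\sum z^iX^i_{-\eta_i}$, that the surviving brackets are the three partials of $t^0$ with respect to $t^r,t^{r+1},t^{r+2}$ in the cyclic pattern of $\Omega$. The main obstacle will be the \textbf{bookkeeping of the grading and the inversion of $C_{ab}$}: one must verify that no "unexpected" terms survive — in particular that the Dirac correction does not generate dependence on $\chi^0$ through the wrong variables, and that the constant prefactor is genuinely independent of the point — and this is where a careful use of the weight table for $Wt(e)$ (the fact that $\eta_r$ is strictly the smallest of the "extra" weights and $\eta_{r+1},\eta_{r+2}$ are the only repeats with the exponents) is essential. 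Once the grading is under control, matching against the known transverse Poisson structure of \cite{DamSab} finishes the proof; alternatively one can invoke their theorem directly and only perform the change of coordinates from their presentation to the Slodowy coordinates, which is the route I would actually take to keep the argument short.
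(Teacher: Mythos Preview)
The paper does not give its own proof of this statement: both in the introduction and later as Theorem~\ref{TPS} it is stated with attribution to \cite{DamSab} (``In \cite{DamSab} they prove the following\ldots'' and ``The following theorem was proved in \cite{DamSab} using Slodowy coordinates''). So your closing suggestion --- to invoke \cite{DamSab} directly and only check that the change to Slodowy coordinates matches --- is exactly what the paper does.

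Your longer sketch is in the right spirit but has a couple of wobbles worth flagging. First, the Casimir argument is stronger than you use it: since $\chi^1,\dots,\chi^{r-1}$ are Casimirs of the Lie--Poisson bracket, $\{\chi^i,\phi_a\}=0$ identically, so the Dirac correction for any bracket involving $t^i$ with $1\le i\le r-1$ vanishes outright. This kills not only the top-left $(r-1)\times(r-1)$ block but also the mixed block $F^{i,r+k}$ immediately --- it does not ``collapse onto derivatives of $t^0$'' as you wrote; it is simply zero. Your phrasing ``$\{t^i,\phi_a\}$ involves only the transverse directions'' obscures this. Second, the substantive content of the theorem is the identification of the $3\times3$ block $\Omega$ with the Jacobian of $t^0$ in the fibre variables, and here your sketch (``read off that the surviving brackets are the three partials of $t^0$'') is a promissory note rather than an argument; this is precisely the computation carried out in \cite{DamSab}, and the grading bookkeeping you allude to is real work. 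Since the paper itself defers to that reference, your proposal is adequate for present purposes, but if you intend a self-contained write-up you should tighten the Casimir step and actually perform the $3\times3$ computation.
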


Let $N\subset Q$ be the hypersurface  of dimension $r$ defined as follows 
\beq
N=\Big\{t\in Q:{ \partial t^0 \over \partial t^{r+2}} = {\partial t^0\over \partial t^{r+1}}=0\Big\}
\eeq

It will follow  that ${\partial t^0 \over \partial t^{r+2}}$ depends linearly on $t^{r+2}$ and  $ {\partial t^0\over \partial t^{r+1}}$ is a polynomial in $t^{r+1}$ of degree $r-2$ (resp. $2$ ) if  $\g$ is a Lie algebra of type $D_r$ (resp. $E_r$). In particular,  $(t^1,....,t^r)$ are well defined coordinates on $N$. Let $\kappa = \max Wt(e)$. Then we proved the following
\begin{thm}
  The space $N$ has a natural  structure of algebraic Frobenius manifold   with charge  $\kappa-1\over \kappa+1$ and degrees $\eta_i+1\over \kappa+1$,~ $i=1,...,r$.
\end{thm}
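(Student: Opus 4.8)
The goal is to endow $N$ with a Frobenius manifold structure, and the natural strategy is to exhibit $N$ as a Dirac reduction of the Slodowy slice $Q$ and to use the theory of Frobenius structures coming from compatible Poisson pencils (the bihamiltonian approach of Dubrovin--Zhang, adapted to classical $W$-algebras as in the author's earlier work \cite{mypaper}). Concretely, the classical $W$-algebra attached to $e$ carries two compatible Poisson brackets: the restriction of the Lie--Poisson bracket $P$ (whose Dirac reduction to $Q$ is the bracket $\{.,.\}^Q$ of the quoted theorem) and a second bracket $P_\lambda$ obtained by shifting $e\mapsto e+\lambda e'$ along a suitable element $e'$ of the Heisenberg subalgebra; together they form a bihamiltonian structure on the loop space of $Q$. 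The first step is therefore to write down this pencil explicitly in the Slodowy coordinates $(t^1,\dots,t^{r+2})$, identify the dispersionless (genus zero) limit, and perform a further Dirac reduction from $Q$ to the hypersurface $N = \{\partial t^0/\partial t^{r+2} = \partial t^0/\partial t^{r+1} = 0\}$. The remarks following the statement — that $\partial t^0/\partial t^{r+2}$ is linear in $t^{r+2}$ and $\partial t^0/\partial t^{r+1}$ is polynomial of degree $r-2$ (type $D$) or $2$ (type $E$) in $t^{r+1}$ — are exactly what guarantees that this reduction is algebraic and that $(t^1,\dots,t^r)$ remain good coordinates on $N$.

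Once the reduced pencil $\{.,.\}^Q_\lambda$ on $N$ is in hand, the second step is to check that it is a flat pencil of metrics of the type that Dubrovin's construction converts into a Frobenius manifold: one must verify that the leading (hydrodynamic) terms define two flat contravariant metrics $g_1, g_2$ on $N$ with $g_2 = g_1 + \lambda(\text{something})$, that the pencil is \emph{exact} (there is an Euler-type vector field $E$ with $\mathrm{Lie}_E g_2 = g_1$), and that the associated structure constants $c^k_{ij}$ obtained from $\Gamma$-symbols of $g_1$ relative to $g_2$ are symmetric and satisfy associativity. Here the homogeneity built into the Slodowy coordinates (degree $t^i = 2\eta_i+2$) does the bookkeeping: after rescaling, the flat coordinates of $g_2$ should be $(t^1,\dots,t^r)$ themselves, the unit is $e = \partial_{t^{r-1}}$ because $\eta_{r-1}$ is the second largest exponent and $d_{r-1}=1$ forces that normalization, and the Euler vector field reads $E = \sum_i \frac{\eta_i+1}{\kappa+1} t^i \partial_{t^i}$ up to the shift coming from $t^0$. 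Matching degrees then yields charge $d = \frac{\kappa-1}{\kappa+1}$ and degrees $d_i = \frac{\eta_i+1}{\kappa+1}$, which is the ``weak relation'' for the conjugacy class of order $\kappa+1$ invoked in the introduction.

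The third step is to show the resulting WDVV potential $\mathbb F(t)$ is algebraic rather than merely analytic. This follows because everything in sight — the invariant $t^0 = \chi^0|_Q$, the entries of $\Omega$, the defining equations of $N$ — is polynomial in the $t^i$, so the metric $g_2$ and the structure constants are rational or algebraic functions on $N$; integrating $c_{ijk}$ twice to get $F$ introduces at worst algebraic functions (the potential being defined by a system whose coefficients are algebraic, with the quasihomogeneity fixing integration constants). I would also check irreducibility and semisimplicity: semisimplicity holds because the Slodowy-slice Poisson structure is generically nondegenerate in the relevant directions, and irreducibility follows from the simplicity of $\g$ together with the indecomposability of the weight data in the table.

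**Main obstacle.** The delicate point is the flatness and associativity of the \emph{reduced} pencil on $N$: Dirac reduction does not in general preserve the property of being a flat pencil of metrics, and one must check by hand (using the explicit shape of $\Omega$ and the linearity/low-degree polynomiality of the entries $\partial t^0/\partial t^{r+i}$) that the two obstructions — the Nijenhuis-type torsion controlling flatness of the pencil and the associativity of $c^k_{ij}$ — actually vanish on $N$. Equivalently, one needs the further Dirac reduction from $Q$ to $N$ to again yield a bihamiltonian structure of hydrodynamic type whose dispersionless limit is Dubrovin-exact; this is where the specific choice of $N$ (killing precisely the two derivatives that appear in $\Omega$, so that the reduced $3\times 3$ block degenerates in a controlled way to a $1\times 1$ Poisson structure on the $t^0$-direction) is essential, and verifying it will require the case-by-case information from the weights table for $D_r$ ($r$ even) and $E_6, E_7, E_8$.
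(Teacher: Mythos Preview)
Your overall architecture---Dirac-reduce the $W$-algebra from $Q$ to $N$, extract a flat pencil of metrics, and invoke Dubrovin's theorem to get a Frobenius structure---matches the paper. But two key mechanisms in your plan differ from what the paper actually does, and one of them affects where the real difficulty lies.

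\textbf{How the pencil arises.} You propose to obtain the pencil from a genuine bihamiltonian structure on $\lop\g$, i.e.\ a second Poisson bracket coming from a shift $e\mapsto e+\lambda e'$ along a Heisenberg element. The paper does \emph{not} do this. It uses a \emph{single} Poisson bracket on $\lop\g$ (equation~\eqref{bih:stru on g}), reduces it via Drinfeld--Sokolov/Dirac to $\widetilde Q$, then Dirac-reduces further to $\widetilde N$. The dispersionless limit of $\{.,.\}^{\widetilde N}$ yields \emph{one} metric $\widehat g_2^{mn}$. The second metric is obtained as $\widehat g_1^{mn}=\partial_{t^{r-1}}\widehat g_2^{mn}$, and the pair forms a flat pencil by Lemma~\ref{flat pencil} because $\widehat g_2^{mn}$ and its Christoffel symbols are \emph{linear} in $t^{r-1}$ (a consequence of the degree count in Proposition~\ref{homg of g2}). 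No second bracket on the loop algebra is ever introduced.

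\textbf{Where the work actually is.} You identify flatness and associativity of the reduced pencil as the main obstacle, and you anticipate a case-by-case check. In the paper's route these are free: flatness of $\widehat g_2$ is automatic from the Dubrovin--Novikov theorem once you know the dispersionless limit is a nondegenerate hydrodynamic bracket, and associativity is packaged inside Dubrovin's Theorem~\ref{dub flat pencil} once the pencil is shown to be \emph{regular quasihomogeneous}. The genuine technical work is instead (i)~nondegeneracy of the minor $\widehat g_2^{mn}$, $m,n\le r$, which is proved uniformly via the opposite Cartan subalgebra (Propositions~\ref{opp Cartan prop}, \ref{Gold}, \ref{nondeg}: one shows $\partial_{t^{r-1}}g^{mn}$ is antidiagonal with entries $\rho^{-1}A_{mn}$), and (ii)~the quasihomogeneity and regularity conditions of Definition~\ref{def reg}, which follow directly from the $W$-algebra identities $\{t^1(x),t^i(y)\}=(\eta_i+1)t^i\delta'+\eta_i t^i_x\delta$ (Proposition~\ref{walg1}) together with the degree bookkeeping. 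No Nijenhuis torsion computation and no type-by-type associativity verification is needed. Your anticipated obstacle is therefore misplaced, and your plan as written leaves the actual hard step---nondegeneracy via $\h'$---unaddressed.
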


By natural we mean that it can be formulated entirely in terms of the representation theory of $\A$. The potential $\mathbb{F}$ of this Frobenius structure depends on the solution of the equation ${\partial t^0\over \partial t^{r+1}}=0$. The set
\beq
Et(e)=\{\eta_i: i=1,...,r\}
\eeq
plays the same role as the set $Et(\g)$ for polynomial Frobenius manifolds and we call it \textbf{the exponents of $e$}.
Here is some brief details about what we did in order to prove the theorem above.

\begin{enumerate}
\item We review  the relation between the nilpotent element and the conjugacy class. Using the  work of \cite{springer} and \cite{DelFeher}, we fix  a number $\rho$ such that $y_1=e+\rho X_{-\eta_\kappa}^\kappa$ is regular semisimple. Then $\h'=\ker y_1$ is Cartan subalgebra and it is known as \textbf{the opposite  Cartan subalgebra}. The element $w:=\exp {2\pi \imn\over \kappa+1}\ad~h$, acts on $\h'$ as a representative of the conjugacy class $D_r(a_1)$ (resp. $E_r(a_1)$) if  $\g$ is of type $D_r$ (resp. $E_r$).
\item We consider the standard local Poisson bracket on the loop algebra $\lop \g$. Then we use the subalgebra $\A$ to perform  Drinfeld-Sokolov reduction (\cite{mypaper},\cite{BalFeh1},\cite{gDSh2}) or equivalently the Dirac  reduction \cite{BalFeh1},\cite{mypaper2} to get a local Poisson structure $\{.,.\}^{\widetilde{Q}}$ on  Slodowy slice
   \[ {\widetilde{Q}}=e+ \lop {\g^f};.\]
   This Poisson bracket  is known in the literature as the \textbf{classical $W$-algebras} associated to the nilpotent orbit of $e$ \cite{BalFeh1}. The leading term of this Poisson bracket  is the adjoint transverse Poisson structure $\{.,.\}^{Q}$.

\item we preform  the Dirac reduction on  $\{.,.\}^{\widetilde{Q}}$ in order to obtain a local Poisson  structure  which admits a dispersionless limit. This is possible  on a loop space ${\widetilde{N}}:=\lop {N}$ of  the  hypersurface $N$ defined above. The new local Poisson bracket $\{.,.\}^{\widetilde{N}}$ is also a classical $W$-algebra. We call it \textbf{subregular classical $W$-algebra}.  Then from $\{.,.\}^{\widetilde{N}}$  we get a local  Poisson bracket of hydrodynamic type $\{.,.\}^{[0]}$.  In the coordinates  $(t^1,...,t^r)$ of $N$ we  have by definition
    \begin{equation}\{t^i(x),t^j(y)\}^{[0]} = g^{ij}(t(x))
  \delta'(x-y) + \Gamma^{ij}_{k}(t(x)) t^k_x \delta(x-y).\end{equation}
where $g^{ij}(t(x))$ and $\Gamma^{ij}_{k}(t(x))$ are polynomials in $t^i(x)$.

\item We transfer to finite dimensional geometry. We  check that  the matrix $g^{ij}(t(x))$ is nondegenerate. It follows that the nondegeneracy condition could be traced back to the existence of opposite Cartan subalgebra $\h'$.  Then from  Dubrovin-Novikov theorem, $g^{ij}(t)$ define a flat contravariant metric  on  $N$. Moreover, form the structure  of the matrix $g^{ij}(t)$,  it follows that the matrix $\partial_{t^{r-1}} g^{ij}$ define another flat  contravariant metric on $N$. Furthermore, the two metrics form   a flat pencil of metrics on $N$.

\item We prove that the flat pencil of metrics on $N$ satisfies the  quasihomogeneity and the regularity conditions. The proof depends on  the definition  of classical $W$-algebra and the structure of the set $Et(e)$.  Then we obtain  a Frobenius structure on $N$  using  a theorem and construction  due to  Dubrovin \cite{DFP}. The resulting  Frobenius manifold satisfy the weak relation.

\end{enumerate}

\section{Preliminaries}

\subsection{Flat pencil of metrics and Frobenius manifolds}
In this section we review the relation between the geometry  of flat pencil of metrics and the theory of Frobenius manifolds outlined  in \cite{DFP}.

Let $M$ be a smooth manifold of dimension $r$. A symmetric bilinear form $(. ,. )$ on $T^*M$ is called \textbf{contravariant
metric} if it is invertible on an open dense subset $M_0 \subset M$. In a local coordinates $(u^1, . . . , u^r)$, if we set
\beq
 g^{ij}(u)=(du^i, du^j);~ i, j = 1, . . . , r,
\eeq
Then  the inverse matrix $ g_{ij}(u)$ of $ g^{ij}(u)$ determines a metric $<. ,. >$ on $TM_0$. We define the \textbf{contravariant Levi-Civita connection} $\Gamma^{ij}_k$ for $(. ,. )$ by
\beq
\Gamma^{ij}_k:=-g^{is} \Gamma_{sk}^j
\eeq
where $\Gamma_{sk}^j$ is the  Levi-Civita connection of $<. ,. >$. We say the metric $(.,.)$ is flat if  $<. ,. >$ is flat.

Let  $g_1^{ij}(u)$ and $g_2^{ij}(u)$ be two contravariant flat metrics on $M$ and denote the corresponding Levi-Civita connections by $\Gamma_{1;k}^{ij}(u)$ and $\Gamma_{2;k}^{ij}(u)$, respectively.   We say $g_1^{ij}(u)$ and $g_2^{ij}(u)$ form a  \textbf{flat pencil of metrics} if
\begin{enumerate}
\item  $g_\lambda^{ij}(u):=g_2^{ij}(u)+\lambda g_1^{ij}(u)$ defines a flat metric on $T^*M$ for a generic $\lambda$ and,
\item  The  Levi-Civita connection of $g_\lambda^{ij}(u)$ is given by \[\Gamma_{\lambda ;k}^{ij}(u)=\Gamma_{2;k}^{ij}(u)+\lambda \Gamma_{1;k}^{ij}(u).\]\end{enumerate}

The flat pencil of metrics in this work is obtained by using the following lemma

\begin{lem} \cite{DCG}\label{flat pencil}
If for a contravariant flat metric $g^{ij}_2$ in some coordinate $(u^1,...,u^r)$ the entries of $g^{ij}_2(u)$ and   its Levi-Civita connection $\Gamma^{ij}_{2;k}$ depend linearly on $u^{r}$ then the metric
\beq
g^{ij}_1=\partial_{u^r} g^{ij}_2
\eeq
 with $g^{ij}_2$ form a flat pencil of metrics. The Levi-Civita connection of the metric $g^{ij}_1$ has the form
 \beq
 \Gamma^{ij}_{1;k}=\partial_{u^r} \Gamma^{ij}_{2;k}.
 \eeq
 \end{lem}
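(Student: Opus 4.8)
The plan is to use the affine linearity in $u^r$ to realise the whole pencil $g_\lambda^{ij}:=g_2^{ij}+\lambda g_1^{ij}$ as a single translate of $g_2^{ij}$. Write a point of $M$ as $u=(\hat u,u^r)$ with $\hat u=(u^1,\dots,u^{r-1})$, and let $\psi_\lambda(u):=(\hat u,u^r+\lambda)$; this is an affine diffeomorphism with identity Jacobian and vanishing second derivatives. Since $g_2^{ij}$ is affine linear in $u^r$ and $g_1^{ij}=\partial_{u^r}g_2^{ij}$, one has $g_\lambda^{ij}(u)=g_2^{ij}(\psi_\lambda(u))$, i.e.\ $g_\lambda^{ij}=\psi_\lambda^{*}g_2^{ij}$; and since $\Gamma^{ij}_{2;k}$ is \emph{also} affine linear in $u^r$,
\beq
\Gamma^{ij}_{2;k}(\psi_\lambda(u))=\Gamma^{ij}_{2;k}(u)+\lambda\,\partial_{u^r}\Gamma^{ij}_{2;k}(u)=\Gamma^{ij}_{2;k}(u)+\lambda\,\Gamma^{ij}_{1;k}(u).
\eeq
Because $\psi_\lambda$ is a diffeomorphism, $g_\lambda^{ij}$ is again a flat contravariant metric (nondegenerate on the dense open set $\psi_\lambda^{-1}(M_0)$), and because $\psi_\lambda$ is a translation its pullback carries the Levi-Civita connection of $g_2$ to that of $g_\lambda$ without the inhomogeneous second-derivative term; hence the Levi-Civita connection of $g_\lambda^{ij}$ is exactly $\Gamma^{ij}_{2;k}+\lambda\,\Gamma^{ij}_{1;k}$. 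This is precisely properties (1) and (2) in the definition of a flat pencil, provided we also check that $\Gamma^{ij}_{1;k}$ is the Levi-Civita connection of $g_1^{ij}$ and that $g_1^{ij}$ is flat.

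For the first of these I would invoke the standard characterisation of the contravariant Levi-Civita connection (recalled in \cite{DFP}): for a contravariant metric $g^{ij}$ the coefficients $\Gamma^{ij}_k$ of its Levi-Civita connection are the \emph{unique} functions satisfying
\beq
\partial_k g^{ij}=\Gamma^{ij}_k+\Gamma^{ji}_k,\qquad g^{is}\Gamma^{jk}_s=g^{js}\Gamma^{ik}_s.
\eeq
Differentiating the first identity for $(g_2,\Gamma_2)$ in $u^r$ and using $g_1^{ij}=\partial_{u^r}g_2^{ij}$ immediately gives $\partial_k g_1^{ij}=\Gamma^{ij}_{1;k}+\Gamma^{ji}_{1;k}$. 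For the second identity, substitute the expansions $g_2^{ij}=a^{ij}(\hat u)+u^r g_1^{ij}(\hat u)$ and $\Gamma^{ij}_{2;k}=A^{ij}_k(\hat u)+u^r\Gamma^{ij}_{1;k}(\hat u)$ into $g_2^{is}\Gamma^{jk}_{2;s}=g_2^{js}\Gamma^{ik}_{2;s}$ and, since this holds identically in $u^r$, compare the coefficients of $(u^r)^2$ to obtain $g_1^{is}\Gamma^{jk}_{1;s}=g_1^{js}\Gamma^{ik}_{1;s}$. By uniqueness $\Gamma^{ij}_{1;k}$ is indeed the Levi-Civita connection of $g_1^{ij}$, and the stated formula $\Gamma^{ij}_{1;k}=\partial_{u^r}\Gamma^{ij}_{2;k}$ then holds by definition. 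Flatness of $g_1^{ij}$ follows by rescaling and passing to the limit: $\lambda^{-1}g_\lambda^{ij}=g_1^{ij}+\lambda^{-1}g_2^{ij}$ is flat for every $\lambda\neq0$ (a constant rescaling does not change the curvature), and as $\lambda\to\infty$ it converges, together with its Levi-Civita connection, to $g_1^{ij}$; since vanishing of the curvature is a closed condition, $g_1^{ij}$ is flat.

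I do not expect a genuine obstacle here: the argument is bookkeeping once the linearity hypothesis is used correctly. The one point that must be handled with care is that the hypothesis is needed for \emph{both} $g_2^{ij}$ and $\Gamma^{ij}_{2;k}$ simultaneously — this is exactly what makes $g_\lambda^{ij}=\psi_\lambda^{*}g_2^{ij}$ hold together with the clean splitting $\Gamma^{ij}_{2;k}(\psi_\lambda(u))=\Gamma^{ij}_{2;k}(u)+\lambda\Gamma^{ij}_{1;k}(u)$, so that the affine shift in $u^r$ has no effect on the second-order data. A secondary point is that one should be sure $g_1^{ij}=\partial_{u^r}g_2^{ij}$ is nondegenerate on a dense open set so that it genuinely qualifies as a contravariant metric; in the application of this lemma in the present paper this nondegeneracy is established separately when the two metrics on $N$ are constructed.
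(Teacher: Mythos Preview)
The paper does not prove this lemma itself; it is quoted from \cite{DCG}. Your argument is correct and is essentially Dubrovin's original proof there: the affine linearity in $u^r$ means the translation $u^r\mapsto u^r+\lambda$ pulls $g_2$ back to $g_\lambda$, so flatness and the additive splitting of the Levi-Civita connection are inherited directly from $g_2$, and the remaining checks (that $\partial_{u^r}\Gamma_{2;k}^{ij}$ is the contravariant Levi-Civita connection of $g_1$, and that $g_1$ itself is flat) go through exactly as you outline. Your closing caveat about the nondegeneracy of $g_1$ is well taken; in the present paper that point is handled separately when the lemma is applied (see Proposition~\ref{nondeg} and Proposition~\ref{walg1}).
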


We are concern with the following  particular class of flat pencil of metrics.
\begin{defn} \label{def reg} A contravariant flat pencil of metrics on a manifold $M$ defined by  the matrices $g_1^{ij}$ and $g_2^{ij}$ is
called \textbf{ quasihomogenous of  degree} $d$ if there exists a
function $\tau$ on $M$ such that the vector fields
\begin{eqnarray}  E&:=& \nabla_2 \tau, ~~E^i
=g_2^{is}\partial_s\tau
\\\nonumber  e&:=&\nabla_1 \tau, ~~e^i
= g_1^{is}\partial_s\tau  \end{eqnarray} satisfy the following
properties
\begin{enumerate}
\item $ [e,E]=e$.

\item $ \Lie_E (~,~)_2 =(d-1) (~,~)_2 $.
\item $ \Lie_e (~,~)_2 =
(~,~)_1 $.
\item
$ \Lie_e(~,~)_1
=0$.
\end{enumerate}
Here, for example $\Lie_E$ denote the Lie derivative  along the vector field $E$ and $(~,~)_1$ denote the metric defined by the matrix  $g^{ij}_1$. In addition, the  quasihomogenous flat pencil of metrics is called \textbf{regular} if  the
(1,1)-tensor
\begin{equation}\label{regcond}
  R_i^j = {d-1\over 2}\delta_i^j + {\nabla_1}_i
E^j
\end{equation}
is  nondegenerate on $M$.
\end{defn}

\subsubsection{\textbf{Frobenius manifolds}}
A Frobenius algebra is a commutative associative algebra with unity $e$ and an invariant nondegenerate bilinear form $(.,.)$.
A \textbf{Frobenius manifold} is a manifold $M$ with
a smooth structure of Frobenius algebra on the tangent space $T_tM$ at
any point $t \in M $ with certain compatibility conditions \cite{DuRev}. Globally, we  require the metric $(.,.)$ to be flat and the unity vector field $e$ is constant with respect to it. In the flat  coordinates $(t^1,...,t^r)$ where $e={\partial\over \partial t^{r-1}}$ the compatibility conditions implies  that there exist a function $\mathbb{F}(t^1,...,t^r)$ such that
\[ \eta_{ij}=(\partial_{t^i},\partial_{t^j})=  \partial_{t^{r-1}}
\partial_{t^i}
\partial_{t^j} \mathbb{F}(t)\]
and the structure constants of the Frobenius algebra is given by
\[ C_{ij}^k=\eta^{kp}  \partial_{t^p}\partial_{t^i}\partial_{t^j} \mathbb{F}(t)\]
where $\eta^{ij}$ denote the inverse of the matrix $\eta_{ij}$.
In this work, we consider Frobenius manifolds where the quasihomogeneity condition takes the form
\begin{equation}
\sum_{i=1}^r d_i t^i \partial_{t^i} \mathbb{F}(t) = \left(3-d \right) \mathbb{F}(t);~~~~d_{r-1}=1.
\end{equation}
 This condition defines {\bf the degrees} $d_i$ and {\bf the charge} $d$ of  the Frobenius structure.
 If $\mathbb{F}(t)$ is an algebraic function we call $M$ an \textbf{algebraic Frobenius manifold}.
The  associativity of Frobenius
algebra implies the potential $\mathbb{F}(t)$ satisfy a system of partial differential equations which appears in topological field theory
and  called WDVV equations:
\begin{equation} \label{frob}
 \partial_{t^i}
\partial_{t^j}
\partial_{t^k} \mathbb{F}(t)~ \eta^{kp} ~\partial_{t^p}
\partial_{t^q}
\partial_{t^n} \mathbb{F}(t) = \partial_{t^n}
\partial_{t^j}
\partial_{t^k} \mathbb{F}(t) ~\eta^{kp}~\partial_{t^p}
\partial_{t^q}
\partial_{t^i} \mathbb{F}(t).
  \end{equation}

The following theorem gives a connection between the geometry of Frobenius manifolds and flat pencil of metrics.
\begin{thm}\cite{DFP}\label{dub flat pencil}
A contravariant quasihomogenous regular  flat pencil of metrics of degree $d$ on a manifold $M$ defines a Frobenius structure on $M$ of  degree $d$.
 \end{thm}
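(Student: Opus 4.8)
The plan is to reconstruct the Frobenius potential directly in the flat coordinates of the first metric $g_1$, matching the four conditions of Definition \ref{def reg} and the regularity \eqref{regcond} to, respectively, the unity axiom, the definition of the flat metric $\eta$, the quasihomogeneity, and the non-resonance needed to integrate the potential. First I would fix flat coordinates $(t^1,\dots,t^r)$ for $g_1$, so that $\eta^{ij}:=g_1^{ij}$ is constant and $\Gamma^{ij}_{1;k}\equiv 0$. Since $e=\nabla_1\tau$ is a $g_1$-gradient, $\nabla_1 e$ is a symmetric tensor, while condition (4), $\Lie_e(\,,\,)_1=0$, is the Killing equation; together they force $\nabla_1 e=0$, so $e$ is $g_1$-parallel and, after a linear change of coordinates, $e=\partial_{t^{r-1}}$ with constant components (the normalization $d_{r-1}=1$). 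Condition (3), $\Lie_e(\,,\,)_2=(\,,\,)_1$, then reads $\partial_{t^{r-1}}g_2^{ij}=\eta^{ij}$, so $g_2^{ij}$ is affine in $t^{r-1}$ with leading coefficient $\eta^{ij}$ --- precisely the hypothesis of Lemma \ref{flat pencil} with $g_1=\partial_{t^{r-1}}g_2$.

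The heart of the argument is to produce a single function $\mathbb F$ whose third derivatives encode both $\eta$ and the multiplication. The flat-pencil hypothesis --- flatness of $g_\lambda=g_2+\lambda g_1$ for generic $\lambda$ together with additivity of the Levi-Civita connections --- is equivalent, by the structural characterization of flat pencils (Dubrovin's flat-pencil lemma), to the existence of a vector field $f^i$ with $g_2^{ij}=\nabla_1^i f^j+\nabla_1^j f^i$ up to a constant and $\Gamma^{ij}_{2;k}=\nabla_1^i\partial_{t^k}f^j$. Using the $t^{r-1}$-linearity and the vanishing curvature of $g_\lambda$, I would integrate $f^i$ to a scalar $\mathbb F$ with $\eta_{ij}=\partial_{t^{r-1}}\partial_{t^i}\partial_{t^j}\mathbb F$ and define $C_{ij}^k=\eta^{kp}\partial_{t^p}\partial_{t^i}\partial_{t^j}\mathbb F$. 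Associativity is then automatic rather than an extra hypothesis: the vanishing of the curvature of $g_\lambda$ for all $\lambda$ is equivalent to the WDVV system \eqref{frob}, so the commutative algebra with structure constants $C_{ij}^k$ is associative with unity $e$.

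I would then extract the conformal data from conditions (1) and (2). The field $E=\nabla_2\tau$ is affine-linear in the $t^i$ (again because $g_2$ is flat in the flat coordinates of $g_1$); condition (1), $[e,E]=e$, shows $e$ carries weight one under $E$, and condition (2), $\Lie_E(\,,\,)_2=(d-1)(\,,\,)_2$, identifies $E$ with the Euler field, fixing the charge $d$ and the degrees $d_i$ from the eigenvalues of $\nabla_1 E$. A direct check that $g_2^{ij}=E^s\,\eta^{ip}\eta^{jq}\partial_{t^p}\partial_{t^q}\partial_{t^s}\mathbb F$ shows $g_2$ is the intersection form of the constructed structure, so the Frobenius manifold one obtains carries exactly the given pencil and has degree $d$.

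The step I expect to be the main obstacle is the reconstruction and integration of the second paragraph, and this is exactly where regularity enters. Recovering $\mathbb F$ from $g_2$ amounts to inverting the action of the Euler field on the second derivatives of $\mathbb F$; the regularity condition --- nondegeneracy of $R_i^j=\tfrac{d-1}{2}\delta_i^j+(\nabla_1)_i E^j$ in \eqref{regcond} --- is the non-resonance guaranteeing this inversion is possible, so that $\mathbb F$ exists and is determined up to inessential quadratic terms. Establishing rigorously that the flat-pencil equations are equivalent to WDVV, and that the successive integrations are globally consistent, is the technical core of the theorem; the remaining verifications are bookkeeping with the conditions of Definition \ref{def reg}.
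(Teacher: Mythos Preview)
The paper does not prove this theorem: it is quoted from \cite{DFP} and used as a black box, with no argument given beyond the statement and the companion formulae \eqref{frob eqs}. So there is no ``paper's own proof'' to compare your proposal against.

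That said, your sketch is a faithful outline of Dubrovin's argument in \cite{DFP}. The logical flow you describe --- pass to flat coordinates for $g_1$; use conditions (3) and (4) to make $e=\partial_{t^{r-1}}$ parallel and $g_2$ affine in $t^{r-1}$; read the flat-pencil equations as the existence of a potential vector field which integrates to $\mathbb F$; identify vanishing of the $\lambda$-curvature with WDVV; and finally use conditions (1), (2) to pin down the Euler field and the charge $d$ --- is exactly the strategy of \cite{DFP}. Your identification of regularity \eqref{regcond} as the non-resonance needed to invert the Euler action on $\partial^2\mathbb F$ is also correct: in the coordinates the paper uses, recovering $\partial_\alpha\partial_\beta\mathbb F$ from $g_2^{ij}$ via \eqref{frob eqs} requires dividing by $(d-1+d_i+d_j)$, and the eigenvalues of $R$ control precisely these denominators.

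One small caution: your claim that ``associativity is automatic'' from flatness of $g_\lambda$ alone skips a step. Flatness of the pencil by itself yields a weaker set of identities; it is only after coupling with the quasihomogeneity data (conditions (1) and (2)) and the regularity that one lands on WDVV in the standard form \eqref{frob}. In \cite{DFP} this is handled by first constructing the structure constants from $\Gamma_{2;k}^{ij}$ and then checking associativity against the curvature equations \emph{together with} the Euler symmetry, rather than from curvature alone. Your final paragraph acknowledges this is the technical core; just be aware that the quasihomogeneity is doing real work there, not merely fixing normalizations.
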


It is well known that from a Frobenius manifold we always have a flat pencil of metrics but it does not necessary satisfy the regularity condition \eqref{regcond} \cite{DFP}. Locally, in the coordinates defining equation  \eqref{frob}, the flat pencil of metrics is
found from the equations \begin{eqnarray}\label{frob eqs} \eta^{ij}&=&g_1^{ij} \\
\nonumber g_2^{ij}&=&(d-1+d_i+d_j)\eta^{i\alpha}\eta^{j\beta}
\partial_\alpha
\partial_\beta \mathbb{F}
\end{eqnarray}
This flat pencil of metric is quasihomogenous  of degree $d$ with $\tau =t^1$. Furthermore, we have
\begin{equation}
E=\sum_i d_i t^i {\partial_{t^i}};~~~e={\partial_{t^{r-1}}}.
\end{equation}

\subsection{Local Poisson brackets}

In this section  we fix notations and we review  the Dirac reduction for local Poisson brackets on loop spaces.

Let $M$ be a manifold. The loop spaces $\lop M$ of $M$ is the space of smooth functions from the circle to $M$. A local Poisson bracket $\{.,.\}$ is  a Poisson bracket on the space of local  functional on $\lop M$. If we choose a local coordinates  $(u^1,...,u^r)$ then $\{.,.\}$  is a finite summation   of the form
\begin{eqnarray} \label{genLocPoissBra}\label{genLocBraGen}\{u^i(x),u^j(y)\}&=&
\sum_{k=-1}^\infty \epsilon^k \{u^i(x),u^j(y)\}^{[k]}\\\nonumber
\{u^i(x),u^j(y)\}^{[k]}&=&\sum_{s=0}^{k+1} A_{k,s}^{i,j}(u(x))
\delta^{(k-s+1)}(x-y),
 \end{eqnarray}
where $\epsilon$ is just a parameter, $A_{k,s}^{i,j}(u(x))$ are homogenous polynomials in $\partial_x^j
u^i(x)$ of degree $s$ when we  assign
$\partial_x^j u^i(x)$ degree $j$, and $\delta(x-y)$ is the Dirac  delta function defined by
\[\int_{S^1} f(y) \delta(x-y) dy=f(x).\]
In particular, the first terms can be written
as follows
\begin{eqnarray}\label{loc poiss}
  \{u^i(x),u^j(y)\}^{[-1]} &=& F^{ij}(u(x))\delta(x-y) \\\nonumber
  \{u^i(x),u^j(y)\}^{[0]} &=& g^{ij}(u(x)) \delta' (x-y)+ \Gamma_k^{ij}(u(x)) u_x^k \delta (x-y).
\end{eqnarray}
   where $g^{ij}(u)$, $F^{ij}(u)$ and $\Gamma_k^{ij}(u)$ are smooth functions on the finite dimensional space $M$.  It follows from the definition that the matrix $F^{ij}(u)$  defines a Poisson structure on
$M$. We say the Poisson bracket admits a \textbf{dispersionless limit} if $F^{ij}(u)=0$ and   $\{u^i(x),u^j(y)\}^{[0]}\neq 0$. In this case  $\{u^i(x),u^j(y)\}^{[0]}$ defines a
 Poisson bracket on $\lop M$ known as \textbf{Poisson bracket of
hydrodynamic type}. We call it \textbf{nondegenerate } if $\det (g^{ij}(u))\neq 0$ in an open dense subset of $M$.

\begin{thm}\cite{DN}\label{DN thm}
In the notations given above. If $\{u^i(x),u^j(y)\}^{[0]}$ defines  a nondegenerate Poisson brackets of
hydrodynamic type then the matrix $g^{ij}(u)$ defines a contravariant flat metric on $M$ and
$\Gamma_{k}^{ij}(u)$ is its  contravariant Levi-Civita connection.
\end{thm}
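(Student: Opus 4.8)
The strategy is to extract the geometric content of the bracket
\[
\{u^i(x),u^j(y)\}^{[0]} = g^{ij}(u(x))\,\delta'(x-y) + \Gamma^{ij}_k(u(x))\,u^k_x\,\delta(x-y)
\]
directly from the two Poisson axioms, skew-symmetry and the Jacobi identity, by expanding each in the derivatives of the delta function and reading off the resulting pointwise identities on $M$. It is convenient to pass to the Hamiltonian operator: for local functionals $F,G$ one writes $\{F,G\} = \int_{S^1} \frac{\delta F}{\delta u^i}\,\mathcal{P}^{ij}\,\frac{\delta G}{\delta u^j}\,dx$ with $\mathcal{P}^{ij} = g^{ij}(u)\partial_x + \Gamma^{ij}_k(u)u^k_x$, so that the axioms become: $\mathcal{P}$ is skew-adjoint, and $\mathcal{P}$ obeys the operator form of the Jacobi identity.

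First I would treat skew-adjointness. Computing the formal adjoint of $\mathcal{P}^{ij}$ and matching it with $-\mathcal{P}^{ji}$ gives at once $g^{ij}=g^{ji}$, so that the symmetric and, by hypothesis, nondegenerate matrix $(g^{ij})$ is the inverse of a metric $g_{ij}$ on a dense open subset $M_0\subset M$; and it gives the identity $\partial_k g^{ij} = \Gamma^{ij}_k + \Gamma^{ji}_k$. Using nondegeneracy I then define an affine connection by $\Gamma^j_{lk} := -g_{li}\Gamma^{ij}_k$, equivalently $\Gamma^{ij}_k = -g^{is}\Gamma^j_{sk}$; this is precisely the contravariant-connection ansatz of the Preliminaries, and in these terms the previous identity becomes the statement that $\Gamma^j_{lk}$ is compatible with $g$, provided one also knows it is symmetric in its lower indices.

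Next comes the Jacobi identity, where the real work sits. Substituting the bracket into $\{\{u^i(x),u^j(y)\},u^k(z)\}$ plus cyclic permutations and setting the result to zero, one groups the coefficients of the independent distributions, the top one being of type $\delta'(x-y)\delta'(x-z)$, descending through the $\delta\cdot u_x$, $\delta\cdot u_{xx}$ and $\delta\cdot u_x u_x$ terms, and obtains a short hierarchy of pointwise equations. Cancelling the factors of $g$ (again using nondegeneracy) one finds: the leading equation forces $\Gamma^j_{lk}=\Gamma^j_{kl}$, which together with the compatibility identity identifies $\Gamma^j_{lk}$ with the Christoffel symbols of the Levi-Civita connection of $g_{ij}$, and hence $\Gamma^{ij}_k$ with its contravariant Levi-Civita connection; and the remaining equations, once this is substituted, collapse to the vanishing of the Riemann curvature tensor of $g$. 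Therefore $g^{ij}$ is a contravariant flat metric and $\Gamma^{ij}_k$ is its contravariant Levi-Civita connection, which is the assertion of the theorem.

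The main obstacle is entirely computational: the careful distributional bookkeeping in the Jacobi identity, repeatedly using $f(u(y))\,\delta'(x-y) = f(u(x))\,\delta'(x-y) + \partial_x\big(f(u(x))\big)\,\delta(x-y)$ and its analogues, and sorting all resulting terms by the order of $\delta$-derivative and by differential degree in $u$, so as to isolate cleanly the symmetry condition at the top level and the curvature condition below it. The only conceptual point to watch is the systematic use of the nondegeneracy hypothesis: it is what turns the contravariant data $(g^{ij},\Gamma^{ij}_k)$ into an honest metric and affine connection on $M_0$, and what licenses cancelling the $g$-factors when reading off the identities; without it neither the metric nor the connection, and hence the conclusion, would make sense.
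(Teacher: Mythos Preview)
The paper does not prove this theorem; it is quoted from \cite{DN} (Dubrovin and Novikov) without proof, as indicated by the citation attached to the theorem environment. So there is no ``paper's own proof'' to compare against.

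That said, your outline is the standard argument and is essentially correct: skew-symmetry of the bracket yields $g^{ij}=g^{ji}$ together with $\partial_k g^{ij}=\Gamma^{ij}_k+\Gamma^{ji}_k$, and the Jacobi identity, after the distributional bookkeeping you describe, produces first the relation $g^{is}\Gamma^{jk}_s=g^{js}\Gamma^{ik}_s$ (equivalently, vanishing torsion for the connection $\Gamma^j_{sk}:=-g_{si}\Gamma^{ij}_k$) and then the vanishing of the curvature. One small point of presentation: the symmetry $\Gamma^j_{sk}=\Gamma^j_{ks}$ does not come from the leading $\delta'\!\cdot\!\delta'$ term of Jacobi alone but rather from combining the Jacobi relations with the skew-symmetry relation; be careful to attribute each identity to the correct coefficient when you write it out in full. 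Otherwise your plan matches the original Dubrovin--Novikov derivation.
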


\subsubsection{\textbf{Dirac reduction}}
Assume we have a local Poisson bracket on the loop space $\lop M$ of a manifold $M$. Let $N\subset M$ be a submanifold of dimension $m$. Then under some assumptions the Poisson bracket can be reduced to $N$ using Dirac reduction. For this end
 we assume $N$ is defined by the equations $u^\alpha=0$ for $\alpha=m+1,...,r$. We introduce three types of indexes; capital letters $I,J,K,...=1,..,r$,
small letters $i,j,k,...=1,....,m$ which parameterize the
submanifold $N$ and Greek letters
$\alpha,\beta,\delta,...=m+1,...,r$.
\begin{prop}\cite{mypaper}\label{dirac fromula}In the notations of equations \eqref{loc poiss}.  Assume  the minor matrix
$F^{\alpha \beta}$ is nondegenerate. Then Dirac reduction is well defined on $\lop {N}$ and gives a local Poisson bracket.
If we write the leading terms of the reduced Poisson bracket in the form
\begin{eqnarray}
  \{u^i(x),u^j(y)\}^{[-1]}_N &=& \widetilde{F}^{ij}(u)\delta(x-y), \\
  \{u^i(x),u^j(y)\}^{[0]}_N &=& \widetilde{g}^{ij}_0 (u)\delta' (x-y)+ \widetilde{\Gamma}_k^{ij} u_x^k \delta (x-y).
\end{eqnarray}
Then
\begin{equation}
\widetilde{F}^{ij}=(F^{ij}-F^{i\beta} F_{\beta\alpha} F^{\alpha j},
)\end{equation}
\begin{equation}
\widetilde{g}^{ij}= g^{ij}_0-g^{i\beta} F_{\beta\alpha}F^{\alpha
j}+F^{i \beta}F_{\beta \alpha} g^{\alpha \varphi} F_{\varphi
\gamma} F^{\gamma j}-F^{i\beta} F_{\beta \alpha} g^{\alpha j},
\end{equation}
and
\begin{equation}
\begin{split}
\widetilde{\Gamma}^{ij}_k u_x^k&= \big(\Gamma^{ij}_k - \Gamma^{i\beta}_k  F_{\beta \alpha} F^{\alpha j} + F^{i \lambda} F_{\lambda \alpha} \Gamma^{\alpha \beta}_k F_{\beta \varphi} F^{\varphi j}-F^{i\beta} F_{\beta \alpha} \Gamma^{\alpha j}_k \big) u_x^k\\
&-\big(g^{i\beta} - F^{i\lambda} F_{\lambda \alpha} g^{\alpha \beta} \big)\partial_x(F_{\beta \varphi} F^{\varphi j})
\end{split}
\end{equation}
and the other terms could be found by solving certain recursive equations.
\end{prop}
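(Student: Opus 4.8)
The plan is to realize the reduction as the loop‑space analogue of the classical Dirac reduction of a Poisson structure by second‑class constraints, and then to expand the resulting formula in powers of $\epsilon$ to read off the coefficients.

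First I would put the ambient bracket in operator form: for $I,J=1,\dots,r$ define the matrix differential operator $\mathcal{L}^{IJ}$ (a formal power series in $\epsilon$, starting at order $\epsilon^{-1}$) by $\{u^I(x),u^J(y)\}=\mathcal{L}^{IJ}\,\delta(x-y)$, so that
\beq
\mathcal{L}^{IJ}=\epsilon^{-1}F^{IJ}+\epsilon^{0}\big(g^{IJ}\partial_x+\Gamma^{IJ}_k u^k_x\big)+O(\epsilon),
\eeq
the coefficient functions being understood to the left of $\partial_x$, and the $O(\epsilon)$ part consisting of higher–order differential operators with coefficients polynomial in the jet variables $\partial_x^m u^I$. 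Let $\mathcal{F}$ be the normal block $(\mathcal{L}^{\alpha\beta})_{\alpha,\beta=m+1}^{r}$ regarded as an operator on loops in the normal directions. Its leading coefficient is the matrix $F^{\alpha\beta}$, invertible by hypothesis, so $\mathcal{F}=\epsilon^{-1}\mathcal{F}^{[-1]}+\epsilon^{0}\mathcal{F}^{[0]}+\cdots$ with $\mathcal{F}^{[-1]}$ the invertible multiplication operator by $F^{\alpha\beta}$. Solving $\mathcal{F}\mathcal{F}^{-1}=\mathrm{Id}$ order by order yields $\mathcal{F}^{-1}=\epsilon\,\mathcal{G}^{[1]}+\epsilon^{2}\,\mathcal{G}^{[2]}+\cdots$ with $\mathcal{G}^{[1]}=(\mathcal{F}^{[-1]})^{-1}$, the multiplication operator by the inverse matrix $F_{\alpha\beta}$, and the recursion
\beq
\mathcal{G}^{[n+1]}=-(\mathcal{F}^{[-1]})^{-1}\sum_{a=0}^{n-1}\mathcal{F}^{[a]}\,\mathcal{G}^{[n-a]},\qquad n\ge 1 .
\eeq
Since $(\mathcal{F}^{[-1]})^{-1}$ is again a zeroth–order (multiplication) operator, an easy induction shows each $\mathcal{G}^{[n]}$ is an honest differential operator of order $n-1$; thus $\mathcal{F}^{-1}$, though a priori pseudodifferential, is local in the $\epsilon$‑graded sense. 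This is the key point that keeps the reduced bracket local, and the displayed recursion is what is meant by "certain recursive equations" in the statement.

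Next I would set
\beq
\{u^i(x),u^j(y)\}_N:=\{u^i(x),u^j(y)\}-\mathcal{L}^{i\alpha}\circ(\mathcal{F}^{-1})_{\alpha\beta}\circ\mathcal{L}^{\beta j}\,\delta(x-y),
\eeq
with all coefficient functions finally evaluated on $N$, i.e. at $u^\gamma=0$. That this is a Poisson bracket is the standard Dirac reduction statement: the constraints $u^\gamma=0$ are second class precisely because the normal block $\mathcal{F}$ is invertible, and the Dirac formula then produces a Poisson bracket on $\lop N$; alternatively one verifies the Jacobi identity directly, the manipulation being formally identical to the finite–dimensional one once $\mathcal{F}^{-1}$ has been constructed as above. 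By the previous paragraph, every coefficient in the $\epsilon$‑expansion of $\mathcal{L}^{i\alpha}\circ\mathcal{F}^{-1}\circ\mathcal{L}^{\beta j}$ is a differential operator with coefficients polynomial in the jet variables, so $\{\,,\,\}_N$ is again of the local form, with leading term at order $\epsilon^{-1}$.

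Finally I would extract the leading terms by collecting powers of $\epsilon$. At order $\epsilon^{-1}$ only $\mathcal{L}^{[-1]}_{i\alpha}\,\mathcal{G}^{[1]}\,\mathcal{L}^{[-1]}_{\beta j}$ contributes, which is multiplication by $F^{i\alpha}F_{\alpha\beta}F^{\beta j}$, giving $\widetilde F^{ij}=F^{ij}-F^{i\beta}F_{\beta\alpha}F^{\alpha j}$. At order $\epsilon^{0}$ there are exactly three contributions, namely $\mathcal{L}^{[0]}_{i\alpha}\,\mathcal{G}^{[1]}\,\mathcal{L}^{[-1]}_{\beta j}$, $\ \mathcal{L}^{[-1]}_{i\alpha}\,\mathcal{G}^{[2]}\,\mathcal{L}^{[-1]}_{\beta j}$, and $\mathcal{L}^{[-1]}_{i\alpha}\,\mathcal{G}^{[1]}\,\mathcal{L}^{[0]}_{\beta j}$, where $\mathcal{G}^{[2]}_{\alpha\beta}=-F_{\alpha\mu}\big(g^{\mu\nu}\partial_x+\Gamma^{\mu\nu}_k u^k_x\big)F_{\nu\beta}$. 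Writing each of these as $(\text{coeff})\,\partial_x+(\text{coeff})$ and reading off the coefficient of $\delta'(x-y)$ gives the four terms of $\widetilde g^{ij}$, while the coefficient of $\delta(x-y)$, after using $\partial_x\big(F_{\beta\varphi}F^{\varphi j}\big)=\partial_{u^k}\big(F_{\beta\varphi}F^{\varphi j}\big)u^k_x$ and restricting to $u^\gamma=0$, yields $\widetilde\Gamma^{ij}_k u^k_x$ in the stated form; the extra summand $-(g^{i\beta}-F^{i\lambda}F_{\lambda\alpha}g^{\alpha\beta})\partial_x(F_{\beta\varphi}F^{\varphi j})$ is precisely the part of these contributions in which $\partial_x$ (coming from $\mathcal{L}^{[0]}$ or $\mathcal{F}^{[0]}$) lands on the $u$‑dependent contraction coefficients rather than on the test function. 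Higher–order coefficients are produced by continuing the same expansion, i.e. by iterating the recursion for $\mathcal{G}^{[n]}$.

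The main obstacle is purely the bookkeeping in the last step: tracking operator orderings and which $\partial_x$ acts on which factor, so as to separate the $\delta'(x-y)$ and $\delta(x-y)$ parts correctly and land on the exact split between $\widetilde\Gamma^{ij}_k u^k_x$ and the residual $\partial_x(F_{\beta\varphi}F^{\varphi j})$ term; a secondary point requiring care is the rigorous justification that $\mathcal{F}^{-1}$ is graded‑local and that the graded‑local Dirac construction indeed satisfies the Jacobi identity on $\lop N$.
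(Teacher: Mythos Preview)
The paper does not prove this proposition at all: it is stated with the citation \cite{mypaper} and no argument is given here, so there is nothing in this paper to compare your proposal against. Your approach --- writing the ambient bracket as an $\epsilon$-graded matrix differential operator, inverting the normal block $\mathcal{F}$ order by order via the recursion $\mathcal{G}^{[n+1]}=-(\mathcal{F}^{[-1]})^{-1}\sum_{a}\mathcal{F}^{[a]}\mathcal{G}^{[n-a]}$, and then reading off the $\epsilon^{-1}$ and $\epsilon^{0}$ coefficients of the Dirac-corrected operator --- is the standard and correct derivation of these formulas; your bookkeeping for the $\delta'(x-y)$ and $\delta(x-y)$ coefficients checks out and reproduces exactly the displayed expressions for $\widetilde F^{ij}$, $\widetilde g^{ij}$, and $\widetilde\Gamma^{ij}_k u^k_x$.

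One small point worth tightening: in the $\widetilde\Gamma$ computation you should note that after restriction to $N$ the term $\partial_x(F_{\beta\varphi}F^{\varphi j})$ involves only $u^k_x$ with $k\le m$, since $u^\gamma\equiv 0$ on $\lop N$ forces $u^\gamma_x=0$; this is what makes the last line of the formula genuinely live on $\lop N$. Otherwise the proposal is complete.
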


\begin{cor}\label{cor dirac}
If the entries  $F^{i \alpha}=0$ on $N$, then the reduced Poisson bracket on $\lop N$ will have the same leading terms, i.e
 \begin{eqnarray} \widetilde{F}^{ij}&=& F^{ij}.\\\nonumber  \widetilde{g}^{ij}&=&g^{ij}.\\\nonumber
 \widetilde{\Gamma}^{ij}_k &=& \Gamma^{ij}_k. \end{eqnarray}
\end{cor}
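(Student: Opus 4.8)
The plan is to read off the corollary directly from the explicit Dirac reduction formulas in Proposition \ref{dirac fromula}, by substituting the hypothesis $F^{i\alpha}=0$ on $N$ and keeping track of which correction terms survive. First I would isolate the two facts about the hypothesis that do the work. Since the Poisson matrix $F^{IJ}$ is antisymmetric, $F^{i\alpha}|_N=0$ is the same as $F^{\alpha i}|_N=0$, so \emph{every} entry of $F^{IJ}$ carrying one Latin and one Greek index vanishes on $N$. Moreover $N$ is the zero locus $\{u^\alpha=0\}$, and each such entry, viewed as a function on $M$, vanishes identically along $N$; hence all of its tangential derivatives $\partial_{u^k}F^{i\alpha}$ with $k\le m$ also vanish on $N$ (only the normal derivatives need survive).

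With these two observations the claims for $\widetilde F^{ij}$ and $\widetilde g^{ij}$ are immediate: in $\widetilde F^{ij}=F^{ij}-F^{i\beta}F_{\beta\alpha}F^{\alpha j}$ and in the formula for $\widetilde g^{ij}$, every term after the leading one contains a factor $F^{i\beta}$ or $F^{\alpha j}$ of mixed index type, which is zero on $N$; hence $\widetilde F^{ij}=F^{ij}$ and $\widetilde g^{ij}=g^{ij}$ there. The only term needing a moment's care is the reduced connection. In the expression for $\widetilde\Gamma^{ij}_k u_x^k$ the three summands $\Gamma^{i\beta}_k F_{\beta\alpha}F^{\alpha j}$, $F^{i\lambda}F_{\lambda\alpha}\Gamma^{\alpha\beta}_k F_{\beta\varphi}F^{\varphi j}$ and $F^{i\beta}F_{\beta\alpha}\Gamma^{\alpha j}_k$ again carry a mixed-index factor and drop out on $N$, and in the last summand the prefactor $g^{i\beta}-F^{i\lambda}F_{\lambda\alpha}g^{\alpha\beta}$ reduces to $g^{i\beta}$. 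What remains is to show that $\partial_x\big(F_{\beta\varphi}F^{\varphi j}\big)$ vanishes on $\lop{N}$: expanding by the chain rule gives $\partial_{u^K}(F_{\beta\varphi}F^{\varphi j})\,u^K_x$, and on $\lop{N}$ one has $u^\alpha\equiv 0$, hence $u^\alpha_x\equiv 0$, so only $K=k\le m$ contributes; but $\partial_{u^k}(F_{\beta\varphi}F^{\varphi j})=(\partial_{u^k}F_{\beta\varphi})F^{\varphi j}+F_{\beta\varphi}\,\partial_{u^k}F^{\varphi j}$ vanishes on $N$ because both $F^{\varphi j}$ and its tangential derivative $\partial_{u^k}F^{\varphi j}$ do, by the second observation above. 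Thus the last summand vanishes on $\lop{N}$ and $\widetilde\Gamma^{ij}_k u_x^k=\Gamma^{ij}_k u_x^k$; since the $u_x^k$ are free coordinates on $\lop{N}$, this yields $\widetilde\Gamma^{ij}_k=\Gamma^{ij}_k$.

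The computation is entirely routine; the one place where the hypothesis must be used in its genuine ``vanishing on the submanifold'' form, rather than merely pointwise, is precisely in killing the $\partial_x\big(F_{\beta\varphi}F^{\varphi j}\big)$ contribution to the reduced connection, via the fact that a function vanishing on $N$ has vanishing derivatives in directions tangent to $N$. That is the only step I would flag as an obstacle, and it is a mild one.
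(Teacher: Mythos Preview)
Your argument is correct and is precisely the direct verification the paper has in mind; the paper states this corollary without proof, treating it as an immediate consequence of the explicit formulas in Proposition~\ref{dirac fromula}. The one nontrivial point---that the last summand $(g^{i\beta}-F^{i\lambda}F_{\lambda\alpha}g^{\alpha\beta})\partial_x(F_{\beta\varphi}F^{\varphi j})$ vanishes on $\lop N$---is exactly what you handled, and your reasoning (that $u^\alpha_x\equiv 0$ on $\lop N$ and that tangential derivatives of $F^{\varphi j}$ vanish on $N$ since $F^{\varphi j}|_N=0$) is the right way to see it.
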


\section{Subregular nilpotent elements }
We  review some facts about the theory of  subregular nilpotent elements in  simple Lie algebras and  the related structure of opposite Cartan subalgebra.

 Let $\g$ be simple Lie algebra of rank $r$. We  assume   the Lie algebra $\g$ is of type $D_r$ where $r$ is even or $E_r$. This assumption is due to the fact that for simply laced Lie algebra, the  opposite Cartan subalgebra for a subregular nilpotent element exists only for these types of Lie algebra.

Let us fix  a subregular  nilpotent element $e\in \g$. By definition, a nilpotent element is called subregular  if $\g^e:=\ker \ad \, e$ has dimension equal to $r+2$ \cite{COLMC}.  Using  the Jacobson-Morozov theorem, we fix a semisimple element $h$ and a nilpotent element $f$ in $\g$ such that $\{e,h,f\}$ generate a $sl_2$-subalgebra $\A\subset \g$ satisfying
\begin{equation}
[h,e]=2 e, ~~~ [h,f]=-2f,~~~[e,f]=h.
\end{equation}
Let us consider  the adjoint representation  of $\A$ on $\g$. Then $\g$ decomposes  to irreducible $\A$-submodules
\begin{equation}\label{decompo}
\g=\bigoplus_{i=1}^{r+2} V^i.
\end{equation}
Let $\dim V^i=2\eta_i+1$ and assume $V^1$ is isomorphic  to $\A$ as a vector space. We call the set
\begin{equation}
Wt(e)=\{\eta_i: i=1,...,r+2\}
\end{equation}
\textbf{the weights of $e$}. Let $\eta_0+1$ be the Coxeter number of $\g$. The set $Wt(e)$, under our choice of a total  order, together with $\eta_0$ are given in the following table.
\begin{center}\label{wtstable}
\begin{tabular}{|c|c|c|c|c|}\hline
$\g$ & & \multicolumn{3}{|c|}{$Wt(e)$} \\
\hline & $\eta_0$& $\eta_1,\ldots,\eta_{r-1}$& $\eta_r$& $\eta_{r+1},\eta_{r+2}$ \\
\hline
$D_r$ & 2r-3& $1,3,\ldots,r-1;r-1,r+1,\ldots, 2r-5$ & $\textbf{r-3}$& 1 ,r-2\\
$E_6$&11 & $1,4,5,7,8$ & $\textbf{2}$&$3,5$\\
$E_7$ &17& $1,5,7,9,11,13$ & $\textbf{3}$& $5,8$\\
$E_8$ &29& $1,7,11,13,17,19,23$ & $\textbf{5}$& $9,14$\\
\hline  & \multicolumn{2}{|c|}{$Et(\g)$}& & \\\hline
\end{tabular}
\end{center}
We observe that the set $Et(\g)$ of exponents of $\g$ is given by
\begin{equation}
Et(\g)=\{\eta_i: i=0,...,r-1\}.
\end{equation}

We emphasis that  the statements and proofs in this work  depend  explicitly on the total ordering of the set $Wt(e)$ and $Et(\g)$ given in this table.

 We fix  on $\g$ the invariant bilinear form $\bil . .$ such that $\bil e f=1$. We  normalize the  decomposition \eqref{decompo} and we fix a basis for each $V^i$ by using the following proposition.

 \begin{prop}\label{reg:sl2:normalbasis}
 There exists a decomposition of $\g$ into a sum of irreducible $\A$-submodules \[\g=\oplus_{i=1}^{r+2} V^i\] in such a way that there is a basis $X_I^i, I=-\eta_i,-\eta_i+1,...,\eta_i$ for each $V^i, ~i=1,\ldots,r+2$ satisfying the following relations
\begin{equation}\label{sl2expand}
X_{I}^i={1\over (\eta_i+I)!} \ad \,e^{\eta_i+I}~X_{-\eta_i}^i~ ,~~~~I=-\eta_i,-\eta_i+1,\ldots, \eta_i.
\end{equation}
and
\begin{eqnarray}\label{sl2relation}
\ad \, h\,X_I^i&=& 2I X_I^i.\\\nonumber
\ad \, e\,X_I^i&=& (\eta_i+I+1) X_{I+1}^i.\\\nonumber
\ad \, f\, X_I^i &=& (\eta_i -I+1) X_{I-1}^i.
\end{eqnarray}
Furthermore
\beq\label{sl2bilinear}
<X_I^i,X_J^j>=\delta_{i,j}\delta_{I,-J} (-1)^{\eta_i-I+1}{2\eta_i\choose \eta_i-I}.
\eeq
\end{prop}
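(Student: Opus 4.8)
The statement is a repackaging of standard $\mathfrak{sl}_2$-representation theory for the adjoint action of $\A=\{e,h,f\}$ on $\g$, so the plan is to build the decomposition and the ladder basis by the lowest-weight-vector construction and then fix the scaling using the invariant form $\langle\cdot,\cdot\rangle$. First I would invoke complete reducibility (valid since $\A\cong\mathfrak{sl}_2$ is semisimple) to write $\g$ as a direct sum $\g=\oplus V^i$ of irreducible $\A$-modules, provisionally, with $\dim V^i=2\eta_i+1$; in each $V^i$ I pick a nonzero lowest weight vector $X^i_{-\eta_i}$, i.e. one with $\ad f\,X^i_{-\eta_i}=0$, which is then automatically of $\ad h$-weight $-2\eta_i$, and I define the remaining $X^i_I$ by \eqref{sl2expand}. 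Irreducibility of $V^i$ makes these vectors nonzero for $-\eta_i\le I\le\eta_i$, and since they have distinct $\ad h$-weights they form a basis of $V^i$, so their union is a basis of $\g$. The first two identities of \eqref{sl2relation} are immediate from \eqref{sl2expand} and $[h,e]=2e$, while the $\ad f$-identity follows by a short induction on $\eta_i+I$ using $[\ad f,\ad e]=-\ad h$ and $\ad f\,X^i_{-\eta_i}=0$.

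Next I would establish the orthogonality encoded in \eqref{sl2bilinear}. Invariance of $\langle\cdot,\cdot\rangle$ under $\ad h$ gives $(2I+2J)\langle X^i_I,X^j_J\rangle=0$, so the form vanishes unless $J=-I$; and since every finite-dimensional $\mathfrak{sl}_2$-module is self-dual, a nonzero $\A$-invariant pairing between $V^i$ and $V^j$ forces $V^i\cong(V^j)^\ast\cong V^j$, hence $\langle V^i,V^j\rangle=0$ whenever $\eta_i\ne\eta_j$. On an isotypic component $W\cong V_{2\eta+1}\otimes\Complex^m$ the invariant form factors as $\beta\otimes B$, with $\beta$ the essentially unique invariant form on the irreducible $V_{2\eta+1}$ (symmetric, because $2\eta$ is even) and $B$ a nondegenerate symmetric form on $\Complex^m$; choosing a basis of $\Complex^m$ diagonalizing $B$ refines the decomposition so that all the $V^i$ are mutually orthogonal. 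The form then restricts nondegenerately to each $V^i$, so $c_i:=\langle X^i_{\eta_i},X^i_{-\eta_i}\rangle\ne0$, and since rescaling $X^i_{-\eta_i}$ by $\lambda$ multiplies every $X^i_I$ by $\lambda$ and $c_i$ by $\lambda^2$, I can normalize $c_i=-1$ over $\Complex$. For the component $V^1\cong\A$ no rescaling is needed: we may take $X^1_1=e$, which forces $X^1_0=-h$ and $X^1_{-1}=-f$, so that $c_1=\langle e,-f\rangle=-1$ consistently with $\langle e,f\rangle=1$.

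Finally I would compute the nonzero entries. Set $a_I=\langle X^i_I,X^i_{-I}\rangle$. Plugging \eqref{sl2relation} into the $\ad e$-invariance of the pair $(X^i_{I-1},X^i_{-I})$ gives the recursion $(\eta_i+I)a_I=-(\eta_i-I+1)a_{I-1}$; starting from $a_{\eta_i}=c_i=-1$ and using $\binom{2\eta_i}{\eta_i-I}/\binom{2\eta_i}{\eta_i-I+1}=(\eta_i-I+1)/(\eta_i+I)$, a downward induction on $I$ yields $a_I=(-1)^{\eta_i-I+1}\binom{2\eta_i}{\eta_i-I}$, which together with the vanishing statements above is exactly \eqref{sl2bilinear}.

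The one step that is not purely formal is the orthogonalization: a weight $\eta$ may be repeated — already in type $D_r$ the values $\eta=1$ and $\eta=r-1$ each occur twice in the table, and one must allow for further accidental coincidences — so one needs to know that the decomposition can be taken with pairwise $\langle\cdot,\cdot\rangle$-orthogonal summands and that each $c_i$ can then be scaled to $-1$. This is exactly where nondegeneracy of the invariant form and the ground field $\Complex$ enter, through the factorization of the form on an isotypic component and the solvability of $\lambda^2=-1/c_i$.
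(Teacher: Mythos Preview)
Your argument is correct and, in fact, more self-contained than the paper's. The paper does not give a full proof: it simply refers to the analogous proposition in \cite{mypaper1}, observes that for the Lie algebras under consideration each weight occurs with multiplicity at most two except in $D_4$ (where $\eta=1$ occurs three times), and handles $D_4$ by writing down an explicit normalization in the worked example of Section~\ref{D4}. By contrast, your orthogonalization step---factoring the invariant form on an isotypic block $V_{2\eta+1}\otimes\Complex^m$ as $\beta\otimes B$ with $\beta$ the unique (symmetric) invariant form on the irreducible and $B$ a nondegenerate symmetric form on the multiplicity space, then diagonalizing $B$ over $\Complex$---works uniformly for any multiplicity, so $D_4$ requires no separate treatment. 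The ladder construction, the $\ad e$-invariance recursion $(\eta_i+I)a_I=-(\eta_i-I+1)a_{I-1}$, and the resulting binomial formula are all checked correctly.

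One minor point of convention: you normalize $V^1$ by $X^1_{-1}=-f$, giving $X^1_1=e$; the paper instead takes $X^1_{-1}=f$, so that $X^1_1=-e$ and $X^1_0=h$ (stated just after the proposition and used again in Proposition~\ref{norm min}, where $u_1=e=-X^1_1$). Both choices satisfy \eqref{sl2bilinear}, but if you want your normalization to match the rest of the paper you should flip that sign.
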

\begin{proof}
  The proof  is similar to the proof of proposition 2.3 in  \cite{mypaper1}, since for all simple Lie algebras, except  $D_4$, there are at most two weights of the same value.  The Lie algebra $D_4$ has three weights equal one. We give  such a normalization for  $D_4$  in section \ref{D4}.
\end{proof}
We observe that the normalized basis for $V^1$ are \[X_1^1=-e,~X_0^1=h,~X_{-1}^1=f.\]

We recall that the semisimple element  $h$ define the following  $\mathbb{Z}$-grading on $\g$ and it is called \textbf{the Dynkin grading}
\begin{equation}
\g=\bigoplus_{i\in \Z} \g_i; ~~\g_i=\{q\in \g: ad~h(q)=i q\}.
\end{equation}
We observe that  $\g_i=0$ if $i$ is odd.
 \subsection{Opposite Cartan subalgebra}
 The following theorem summarize the relation between the   subregular nilpotent element $e$ and a quasi-Coxeter conjugacy class in the Weyl group of $\g$. To simplify the notations let  \textbf{$\kappa$ denote the maximum weight $\eta_{r-1}$}.
\begin{thm}\label{opposite}
There exists a nonzero element  $X'\in \g_{-2\kappa}$ such that the element \[y_1=e+X'\] is regular semisimple. Let $\h'$ be the  Cartan subalgebra containing $y_1$, i.e \[\h'=\ker \ad~y_1\] and  consider the adjoint group element $w$ defined by  \[w=\exp {2\pi \imn\over \kappa+1}\ad~h.\] Then $w$ acts on $\h'$ as a representative of a regular quasi-Coxeter conjugacy class  in the Weyl group acting on $\h'$. The conjugacy class is of type $D_r(a_1)$ (resp. $E_r(a_1)$) if $\g$ is of type $D_r$ (resp. $E_r$). Furthermore, the element $y_1$ can be completed to a basis  $y_i,~i=1,\ldots,r$ for $\h'$
having the form
\[ y_i=v_i+u_i, ~~u_i\in \g_{2\eta_i},~ v_i\in \g_{2\eta_i-2(\kappa+1)}\]
and such that $y_i$ is an eigenvector of $w$ with eigenvalue $\exp {2\pi \imn\eta_i\over \kappa+1}$.
\end{thm}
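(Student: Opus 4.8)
The plan is to construct the element $y_1$ and the opposite Cartan subalgebra $\h'$ explicitly from the $\A$-module structure, and then to identify the resulting Weyl group element by comparing its characteristic data with the known classification of regular conjugacy classes. First I would observe that the Dynkin grading places $e$ in $\g_2$ and that $\g_{-2\kappa}$ is nonzero precisely because $\kappa = \eta_{r-1}$ is the top weight among the exponents $Et(\g) = \{\eta_i : i=0,\ldots,r-1\}$; in fact, taking $X' = \rho X_{-\eta_\kappa}^\kappa$ (using the notation of the introduction, where $X_{-\eta_\kappa}^\kappa$ is the normalized minimal weight vector of the module $V^\kappa$ whose weight is $\kappa$), this is a lowest weight vector in a module paired with a highest-weight part containing a regular semisimple combination with $e$. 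The point is that $y_1 = e + \rho X'$ lies in $e + \g^f$ only after a shift, but more directly $y_1$ is a cyclic-type element: I would show that for generic $\rho$ its centralizer $\ker \ad\, y_1$ has dimension exactly $r$, hence is a Cartan subalgebra. This is where I would invoke the results of \cite{springer} and \cite{DelFeher} on regular elements and Kostant-type sections: the element $w = \exp\frac{2\pi\imn}{\kappa+1}\ad\, h$ has order $\kappa+1$ (since $\ad\, h$ has even integer eigenvalues and the relevant ones are bounded by $2\kappa$), and $w(y_1) = e^{\frac{4\pi\imn}{\kappa+1}} e + \rho\, e^{-\frac{4\pi\imn\kappa}{\kappa+1}} X' = e^{\frac{4\pi\imn}{\kappa+1}} y_1$ since $-2\kappa \equiv 2 \pmod{2(\kappa+1)}$; so $y_1$ is an eigenvector of $w$, and by Springer's theory of regular elements, $w$ restricted to $\h'$ is a regular element of the Weyl group whose order is $\kappa+1$.

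Next I would diagonalize $w$ on $\h'$. Since $w$ has finite order and $\h'$ is $w$-invariant, $\h'$ decomposes into eigenspaces; I would produce eigenvectors $y_i$ by a weighted-homogeneity argument. The grading operator $\ad\, h$ and the map $q \mapsto e^{\frac{2\pi\imn}{\kappa+1}\ad\, h}(q)$ act compatibly, and $\h'$ is spanned by elements homogeneous with respect to a shifted grading; concretely, starting from any basis of $\h'$ one can project onto $w$-eigenspaces, and because $\h' = \ker\ad\, y_1$ with $y_1$ itself an eigenvector, the commuting family can be arranged to consist of eigenvectors. To get the specific form $y_i = v_i + u_i$ with $u_i \in \g_{2\eta_i}$, $v_i \in \g_{2\eta_i - 2(\kappa+1)}$, I would argue that the Dynkin grading induces a filtration on $\h'$ compatible with $w$; the leading (top-degree) component $u_i$ of $y_i$ must live in some $\g_{2m_i}$, and since $w$ acts on $\g_{2m}$ by the scalar $e^{\frac{4\pi\imn m}{\kappa+1}}$, the eigenvalue of $y_i$ is $e^{\frac{4\pi\imn m_i}{\kappa+1}}$; matching this against the known eigenvalues of the conjugacy class $D_r(a_1)$ (resp. $E_r(a_1)$) forces $m_i = \eta_i$ for $i = 1,\ldots, r-1$ together with one more exponent. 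The lower component then necessarily sits in degree $2\eta_i - 2(\kappa+1)$, since $y_i \in \ker\ad\,y_1$ and $y_1$ has the two-step structure $e \in \g_2$ plus $X' \in \g_{-2\kappa}$: applying $\ad\, y_1$ to a homogeneous ansatz and requiring the result to vanish pins the second term to exactly that degree. I would handle $D_4$ separately (as flagged in the proof of Proposition \ref{reg:sl2:normalbasis}) using the explicit normalization to be given in section \ref{D4}.

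Finally, the identification of the conjugacy class as quasi-Coxeter of type $D_r(a_1)$ or $E_r(a_1)$: I would match invariants. The order is $\kappa + 1$ and the eigenvalues $e^{\frac{2\pi\imn\eta_i}{\kappa+1}}$ (reading the factor of $2$ into the normalization of $\ad\,h$ versus the standard one) coincide, by the table of $Wt(e)$ and $Et(\g)$, with the eigenvalues of the regular conjugacy classes listed in \cite{CarClassif}; together with the dimension count (the element is regular, so its eigenvalue $1$ does not occur, and the product of eigenvalues / trace determines the class among regular ones) this uniquely identifies it. The existence of a regular element of that order in the Weyl group, with the opposite Cartan subalgebra as its fixed-point-free eigenspace, is exactly what characterizes these classes as quasi-Coxeter, and this is the only place the hypothesis "$r$ even in type $D_r$" is used — it is precisely the condition under which $\kappa = \eta_{r-1}$ is achieved by a module outside $V^1$ giving a regular $y_1$, equivalently the condition under which $D_r(a_1)$ is quasi-Coxeter with the right spectrum.

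The main obstacle I expect is the second step: proving that $y_1$ (or the generic $e + \rho X'$) is genuinely regular semisimple, i.e. that $\dim\ker\ad\,y_1 = r$ and not larger. Subregular nilpotents are delicate, and one cannot simply quote the principal (regular) case; I would expect to need either the explicit module data from the table together with a rank computation of $\ad\,y_1$ on each $V^i$, or a deformation argument showing the generic fiber of $\chi|_Q$ near $y_1$ is regular semisimple — which is where the Brieskorn/Slodowy picture ($\chi|_Q$ a semiuniversal deformation, smooth generic fiber) and the works \cite{springer}, \cite{DelFeher} on the relation between nilpotent orbits and regular conjugacy classes do the real work.
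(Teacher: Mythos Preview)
The paper does not give a proof of this theorem at all: it simply cites Springer \cite{springer} for the $E_r$ cases and the appendix of Delduc--Feh\'er \cite{DelFeher} for the $D_r$ case. So your proposal is not being compared against an argument but against a bare reference.

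Viewed in that light, your outline is a reasonable reconstruction of what those references accomplish, and you have correctly isolated the genuine content: the hard step is showing that $y_1=e+\rho X'$ is regular semisimple for some (equivalently, generic) $\rho$. Once that is granted, the eigenvector structure of $\h'$ under $w$ follows essentially as you describe, since $w$ acts on $\g_{2m}$ by the scalar $\exp\frac{4\pi\imn m}{\kappa+1}$ and the two-term form $y_i=u_i+v_i$ with $u_i\in\g_{2\eta_i}$, $v_i\in\g_{2\eta_i-2(\kappa+1)}$ is forced by the requirement $[y_1,y_i]=0$ together with $y_1\in\g_2\oplus\g_{-2\kappa}$. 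The identification of the conjugacy class then reduces, as you say, to matching the order $\kappa+1$ and the eigenvalue multiset against Carter's tables.

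Two small corrections to your sketch. First, $\kappa=\eta_{r-1}$ is the maximum of $Wt(e)$, not of $Et(\g)$; the top exponent of $\g$ is $\eta_0$, the Coxeter number minus one, which is strictly larger. Second, the eigenvalue you compute for $y_1$ is $\exp\frac{4\pi\imn}{\kappa+1}$, while the theorem as stated gives $\exp\frac{2\pi\imn\eta_1}{\kappa+1}=\exp\frac{2\pi\imn}{\kappa+1}$; you noticed the factor-of-two issue and absorbed it into ``normalization,'' but it would be better to flag that the statement in the paper appears to drop a factor of $2$ (the grading is by eigenvalues of $\ad\,h$, which are even). Neither of these affects the logic of your approach; the substantive work remains the regularity of $y_1$, which in the cited references is established case by case rather than by a uniform argument.
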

\begin{proof} The  proof for a Lie algebra of type $E_i,~i=6,7,8$ is obtained by Springer \cite{springer}.  The case of a Lie algebra of type $D_r$, the proof is given  in the appendix of \cite{DelFeher}.
\end{proof}

  We fix an element  $X'$ satisfy the hypothesis of  the theorem above. In the literature, the element  $y_1=e+X'$  is called a \textbf{cyclic element} and   $\h'=\ker \ad ~y_1$ is called the \textbf{opposite Cartan subalgebra}. We will call the set
  \beq
 Et(e):=\{\eta_i, ~i=1,...r\}\subset  Wt(e)
  \eeq    \textbf{the exponents of $e$} as it plays the same role of the exponents of  $(\g)$ to the regular nilpotent elements \cite{mypaper1}. Let us
  consider a basis $y_i=u_i+v_i$ for $\h'$ satisfying the hypothesis of the theorem above. We normalize them by using the following theorem
\begin{prop}\label{norm min}
The basis $y_i=u_i+v_i$ can  be chosen in such  a way that
\beq
u_i=-X_{\eta_i}^i, ~i=1,\ldots,r
\eeq
and
\beq
v_1=X'=\rho X^{r-1}_{-\kappa}
\eeq
for some nonzero number $\rho$
\end{prop}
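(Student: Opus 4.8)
The plan is to obtain $v_1$ directly from Theorem \ref{opposite}, to identify the top $h$-graded component of each $y_i$ as a highest weight vector for $\A$, and then to use the remaining freedom --- the choice of decomposition in Proposition \ref{reg:sl2:normalbasis}, and changes of the basis $\{y_i\}$ of $\h'$ by rescaling and by recombination inside a single $w$-eigenspace --- to force the prescribed normalization.

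First I would record $v_1$. By the weight table, $\kappa=\eta_{r-1}$ is the strictly largest weight of $e$, so $\g_{-2\kappa}$ is one dimensional, spanned by $X^{r-1}_{-\kappa}$. Since Theorem \ref{opposite} gives $y_1=e+X'$ with $X'\in\g_{-2\kappa}$, it follows that $v_1=X'=\rho\,X^{r-1}_{-\kappa}$ for some $\rho$, and $\rho\neq 0$, since otherwise $y_1=e$ would be nilpotent, contradicting that $\h'=\ker\ad y_1$ is a Cartan subalgebra. I would also note that $u_1=e=-X^1_1=-X^1_{\eta_1}$ in the normalization of $V^1$ fixed in Proposition \ref{reg:sl2:normalbasis}, so $y_1$ already has the desired form.

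Next, for $i\ge 2$ I would write $y_i=u_i+v_i$ as in Theorem \ref{opposite}, with $u_i\in\g_{2\eta_i}$ and $v_i\in\g_{2\eta_i-2(\kappa+1)}$, and look at the equation $[y_1,y_i]=0$ (valid because $\h'$ is abelian). Its component in the largest $h$-degree $2\eta_i+2$ is $[e,u_i]$, so $[e,u_i]=0$, i.e. $u_i\in\ker\ad e\cap\g_{2\eta_i}=\bigoplus_{\eta_j=\eta_i}\Complex X^j_{\eta_j}$, a space whose dimension is the multiplicity of the spin-$\eta_i$ module in $\g$. Each $u_i$ is nonzero (a nonzero element of the Cartan subalgebra $\h'$ supported in strictly negative $h$-degrees would be nilpotent), and $u_1,\dots,u_r$ are linearly independent (a nontrivial relation among them would produce a nonzero nilpotent element of $\h'$).

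Finally I would normalize. By the table, for every Lie algebra in the statement each weight of $e$ has multiplicity one, apart from a few weights of multiplicity two and, in the exceptional case $D_4$, the weight $1$ of multiplicity three, which I would dispatch in Section \ref{D4}. When $\eta_i$ has multiplicity one, $u_i=c_iX^i_{\eta_i}$ with $c_i\neq 0$ and replacing $y_i$ by $-c_i^{-1}y_i$ gives $u_i=-X^i_{\eta_i}$ without changing the decomposition. When a weight $\mu$ is carried with multiplicity two by modules $V^a,V^b$, I would distinguish two cases. If $a,b\le r$, then $u_a,u_b$ are a basis of the plane $\Complex X^a_\mu\oplus\Complex X^b_\mu$; writing $(u_a,u_b)^t=M\,(X^a_\mu,X^b_\mu)^t$ with $M$ invertible and replacing $(y_a,y_b)^t$ by $-M^{-1}(y_a,y_b)^t$ gives the top components $-X^a_\mu$ and $-X^b_\mu$, and since $y_a,y_b$ share the $w$-eigenvalue $\exp\frac{2\pi\imn\mu}{\kappa+1}$ the new basis still satisfies Theorem \ref{opposite}. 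If only one of $a,b$ --- say $a$ --- lies in $\{1,\dots,r\}$ (this happens for the weight $1$ in $D_r$, where $V^1=\A$ is already fixed and $u_1=e$, and for the weight $5$ in $E_6$ and $E_7$), I would instead use the freedom in Proposition \ref{reg:sl2:normalbasis} to choose the decomposition of the spin-$\mu$ isotypic component so that $V^a$ has $u_a$ as highest weight vector and $V^b$ is orthogonal to $V^a$, and then rescale $y_a$. I expect this last point --- and its triple-weight analogue for $D_4$ --- to be the only real obstacle: one has to pick the $\A$-module decomposition that is at the same time orthogonal, as \eqref{sl2bilinear} demands, and adapted to the vectors $y_i$, which is precisely the kind of bookkeeping in the proof of Proposition \ref{reg:sl2:normalbasis}, made possible by the fact that (outside $D_4$) at most two weights coincide.
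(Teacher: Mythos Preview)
Your approach is essentially the same as the paper's: both arguments extract $u_i\in\g^e$ from the top $h$-graded component of $[y_1,y_i]=0$, identify $v_1$ by the one-dimensionality of $\g_{-2\kappa}$, and then normalize using the multiplicity structure of $Wt(e)$. Your write-up is in fact more detailed than the paper's, which is quite terse. One small addition in the paper: it observes that the $u_i$ mutually commute (from the top component of $[y_i,y_j]=0$), so they span a commutative subalgebra of $\g^e$; this is not obviously used, but it is worth recording.

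The only real divergence is in how $E_6$ and $E_7$ are treated. The paper simply says these cases (together with $D_4$) are handled by direct computation, whereas you attempt a uniform conceptual argument by re-choosing the $\A$-module decomposition of Proposition~\ref{reg:sl2:normalbasis} so that $V^a$ is generated by $u_a$. That step is not quite complete as written: for the orthogonality in \eqref{sl2bilinear} to survive, the $\A$-submodule generated by $u_a$ must be non-isotropic with respect to $\bil{\cdot}{\cdot}$, and you have not checked this. You flag it yourself as ``the only real obstacle,'' which is accurate; the paper does not resolve it conceptually either but simply verifies it by hand in those two cases. So this is not a gap relative to the paper --- it is exactly the residual case-check the paper also leaves to computation.
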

\begin{proof}
From the construction we know that $u_1=e=-X_{1}^1$. Then, it is easy to see that $u_i, ~i=1,...,r$ generate a commutative subalgebra of $\g^e$. But $X^{i}_{\eta_i}$ are homogenous basis  for $\g^e$. Hence for  a Lie algebra of type  $E_8$ and $D_r,~r>4$ the normalization of $u_i, i=2,...,r$ and $v_1$ follows  from the structure of the set $Wt(e)$.  For a Lie algebra of type $E_6$, $E_7$ and $D_4$ we obtained  such  normalization by direct computations.
\end{proof}

Let us consider the matrix $A_{i,j}$ of the invariant bilinear form on $\h'$ under the basis $y_i=-X^i_{\eta_i}+v_i$.
\beq\label{Opp Cartan}
A_{ij}=\bil {y_i}{y_j}=-\bil {X_{\eta_i}^i}{v_j}-\bil {v_i}{X_{\eta_j}^i};~~i,j=1,\ldots,r.
\eeq
We know from the theory of Cartan subalgebras that the matrix $A_{ij}$ is nondegenerate. Some useful properties  we gain from $\h'$ are summarized in the following proposition.

\begin{prop}\label{opp Cartan prop}
The matrix $A_{ij}$ is antidiagonal  with respect to the set $Et(e)$ in the sense that  \[A_{ij}=0, ~{\rm if}~\eta_i+\eta_j\neq\kappa+1.\]
Therefore, after totally reordering the set $Et(e)$ in the form
\[\mu_1\leq \mu_2\ldots \leq \mu_r,\] we have the property \beq
\mu_j+\mu_{r-j+1}=\kappa+1=\eta_1+\eta_{r-1} ~,j=1,...,r.\eeq
\end{prop}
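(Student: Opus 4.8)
The plan is to exploit the $\A$-module structure of $\g$ together with the way the bilinear form pairs weight vectors, as recorded in Proposition~\ref{reg:sl2:normalbasis}. First I would recall that each $y_i = u_i + v_i$ with $u_i = -X^i_{\eta_i}\in\g_{2\eta_i}$ and $v_i\in\g_{2\eta_i-2(\kappa+1)}$, so that $v_i$ lies in the $\A$-submodule span of the $V^j$'s at weight $2\eta_i - 2(\kappa+1)$. Expanding $A_{ij} = \bil{y_i}{y_j} = \bil{u_i}{u_j} + \bil{u_i}{v_j} + \bil{v_i}{u_j} + \bil{v_i}{v_j}$, I would argue term by term. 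The term $\bil{u_i}{u_j}$ vanishes because $u_i,u_j\in\bigoplus_{k>0}\g_k$ and the invariant form pairs $\g_a$ with $\g_{-a}$ only; likewise $\bil{v_i}{v_j} = 0$ since both $v_i,v_j$ sit in strictly negative Dynkin degree (as $\eta_i\le\kappa<\kappa+1$). So $A_{ij} = \bil{u_i}{v_j} + \bil{v_i}{u_j}$, which already gives the displayed formula \eqref{Opp Cartan}.

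Next I would pin down when the two surviving terms can be nonzero. For $\bil{u_i}{v_j}\ne 0$ we need $u_i$ and $v_j$ to lie in opposite Dynkin degrees, i.e. $2\eta_i = -(2\eta_j - 2(\kappa+1))$, that is $\eta_i + \eta_j = \kappa+1$; symmetrically $\bil{v_i}{u_j}\ne 0$ forces the same condition $\eta_i+\eta_j=\kappa+1$. Hence if $\eta_i+\eta_j\ne\kappa+1$ then $A_{ij}=0$, which is the antidiagonality claim. (Here I am using that the eigenvalue assignment in Theorem~\ref{opposite}, $w y_i = \exp(2\pi\imn\eta_i/(\kappa+1)) y_i$, is consistent with this: $w$ acts on $\h'$ preserving the nondegenerate form, so $A_{ij}=0$ unless the product of the two eigenvalues is $1$, i.e. $\eta_i+\eta_j\equiv 0 \pmod{\kappa+1}$; since $0<\eta_i,\eta_j\le\kappa$ the only possibility in range is $\eta_i+\eta_j=\kappa+1$. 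Either the grading argument or the eigenvalue argument suffices, and I would present whichever is cleaner.)

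For the second assertion, reorder $Et(e)$ as $\mu_1\le\mu_2\le\dots\le\mu_r$. Nondegeneracy of $A_{ij}$ (from the theory of Cartan subalgebras) together with the antidiagonality just proved means that, viewing $A$ as a matrix in the reordered basis, it is block–antidiagonal: the only nonzero entries $A_{ij}$ have $\mu_i+\mu_j=\kappa+1$, so for each $i$ the index set $\{\,j : \mu_j = \kappa+1-\mu_i\,\}$ must be nonempty, and moreover the multiplicities must match up exactly for $\det A\ne 0$. A short combinatorial argument then shows $\mu_j + \mu_{r-j+1} = \kappa+1$ for every $j$: pair off the smallest value with the largest, using that equal values occur with equal multiplicity on the two "ends," and induct inward. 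Finally $\mu_1 = \eta_1 = 1$ and $\mu_r = \eta_{r-1} = \kappa$ by inspection of the table, so $\kappa+1 = \eta_1 + \eta_{r-1}$, giving the stated normalization.

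The only genuinely delicate point is the combinatorial step deducing $\mu_j+\mu_{r-j+1}=\kappa+1$ from antidiagonality plus nondegeneracy; one must make sure that repeated weights (notably the two copies of $r-1$ in type $D_r$, which occur among $\eta_1,\dots,\eta_{r-1}$, or $\kappa=\eta_{r-1}$ itself versus other entries) are handled so that the "antidiagonal pairing" is a genuine involution on indices. This is where the explicit total order in the table is used, and I would check it directly against that table — it is the place where a naive argument could overlook a coincidence of weights. Everything else is a direct unwinding of the $\A$-grading and the invariance of $\bil{\cdot}{\cdot}$.
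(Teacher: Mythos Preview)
Your proposal is correct. The paper's proof uses only your second route (the eigenvalue argument via $w$-invariance of the bilinear form): from $\bil{y_i}{y_j}=\bil{w y_i}{w y_j}=\exp\frac{2(\eta_i+\eta_j)\pi\imn}{\kappa+1}\bil{y_i}{y_j}$ one reads off $\eta_i+\eta_j\equiv 0\pmod{\kappa+1}$, hence $=\kappa+1$ in the allowed range. Your primary route via the Dynkin grading --- observing that $u_i\in\g_{2\eta_i}$ and $v_j\in\g_{2\eta_j-2(\kappa+1)}$ can pair nontrivially only when their degrees sum to zero --- is an equally valid and slightly more elementary alternative that avoids invoking the Weyl group element at all; it uses only the decomposition $y_i=u_i+v_i$ from Theorem~\ref{opposite}. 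The paper leaves the second assertion ($\mu_j+\mu_{r-j+1}=\kappa+1$) essentially implicit, as an immediate consequence of nondegeneracy plus antidiagonality, whereas you spell out the block/multiplicity argument; your caution about repeated weights is well placed, but note that the equality of multiplicities of $m$ and $\kappa+1-m$ in $Et(e)$ is forced automatically by nondegeneracy of the corresponding block, so no case-by-case inspection of the table is actually needed.
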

\begin{proof}
We will use the fact that the matrix $\bil . .$ is a nondegenerate invariant bilinear form on $\h'$. Hence for any element $y_i$ there exists an element  $y_j$ such that $\bil {y_i}{y_j}\neq 0$. But if  $w$ is the  quasi-Coxeter element we defined in theorem  \ref{opposite} then the  equality
\[\bil {y_i}{y_j}=\bil {w y_i}{w y_j}=\exp {2(\eta_i+\eta_j)\pi \imn \over \kappa+1}\bil {y_i}{y_j} \]
implies that in case $\bil {y_i}{y_j}\neq 0$ we must have $\eta_i+\eta_j=\kappa+1$. Hence, the  matrix $A_{ij}$ is  antidiagonal  with respect to the set $Et(e)$.
\end{proof}

In the remainder of this paper let  \textbf{$a$ denote the element  $X_{-\kappa}^{r-1}$.}

\begin{prop}\label{Gold}
The commutators of $a$ and  $X_{\eta_i}^i$ satisfy the relation
\begin{equation}
{\bil {[a,X_{\eta_i}^i]}{X_{\eta_j-1}^j}\over 2\eta_j }+{\bil {[a,X_{\eta_j}^j]}{X_{\eta_i-1}^i}\over 2 \eta_i}=  {1\over \rho}A_{ij}
\end{equation}
for all $i,j=1,\ldots,r$. Here the nonzero number $\rho$ is the same as  in proposition \ref{norm min}.
\end{prop}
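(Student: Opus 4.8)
The plan is to extract the identity from the fact that $y_1 = -X_1^1 + \rho a$ (by Proposition~\ref{norm min}, since $X' = \rho X^{r-1}_{-\kappa} = \rho a$) together with the invariance of $\bil . .$ applied to $\ad\, y_1$. The key observation is that $y_i = -X^i_{\eta_i} + v_i$ lies in $\h' = \ker \ad\, y_1$, so $[y_1, y_i] = 0$ for all $i$. Writing this out, $[\,-X_1^1 + \rho a,\ -X^i_{\eta_i} + v_i\,] = 0$, i.e.
\[
[X_1^1, X^i_{\eta_i}] - [X_1^1, v_i] - \rho[a, X^i_{\eta_i}] + \rho[a, v_i] = 0.
\]
Since $X_1^1 = -e$ and $[e, X^i_{\eta_i}] = 0$ (the element $X^i_{\eta_i}$ is a highest weight vector, annihilated by $\ad\,e$), the first term vanishes. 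That should let me solve for $\rho[a, X^i_{\eta_i}]$ in terms of $[e, v_i]$ and $\rho[a, v_i]$, or — more usefully — pair the whole relation against an appropriate basis vector and use $\bil{[x,y]}{z} = \bil{x}{[y,z]}$ repeatedly.

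Concretely, I would compute $\bil{[y_1, y_i]}{z}$ for a well-chosen $z$, most likely $z = X^j_{\eta_j - 1}$ or a scalar multiple thereof, and symmetrize in $i \leftrightarrow j$. The left side is $0$; the right side, after moving brackets around via invariance, should reorganize into the two terms $\bil{[a, X^i_{\eta_i}]}{X^j_{\eta_j-1}}$ and $\bil{[a, X^j_{\eta_j}]}{X^i_{\eta_i-1}}$, while the contributions involving $e = -X_1^1$ acting on the $v$'s should recombine — using $\ad\,e\, X^k_I = (\eta_k + I + 1) X^k_{I+1}$ from \eqref{sl2relation} — into $\bil{v_i}{X^j_{\eta_j}} + \bil{X^i_{\eta_i}}{v_j}$, which is exactly $-A_{ij}$ by \eqref{Opp Cartan}. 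The normalization constants $2\eta_j$ and $2\eta_i$ in the denominators should appear precisely from the coefficient $\eta_k + I + 1$ in the $\ad\,e$ action evaluated at $I = \eta_k - 1$, which gives $2\eta_k$; this is why the weight vectors one step below the top, $X^k_{\eta_k - 1}$, are the natural pairing partners.

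The main obstacle I anticipate is bookkeeping the several cross-terms correctly: one has to be careful that $\bil{[a, v_i]}{z}$-type terms (and terms where $e$ hits $v_i$) either cancel in pairs after symmetrization or assemble cleanly into $A_{ij}$, and this relies on $v_i \in \g_{2\eta_i - 2(\kappa+1)}$ together with the weight-grading constraints — a pairing $\bil x y$ is nonzero only when the Dynkin degrees of $x$ and $y$ sum to zero, which kills most of the unwanted terms and forces the surviving ones to have the stated form. I would also need the precise value of $\bil{X^i_{\eta_i}}{X^i_{-\eta_i}}$ from \eqref{sl2bilinear} to pin down the overall constant $1/\rho$ rather than some other multiple of it. I expect the whole computation to be short once the right $z$ and the right grading argument are in place; the risk is purely in tracking signs and the factorials/binomials from \eqref{sl2expand}–\eqref{sl2bilinear}.
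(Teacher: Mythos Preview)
Your approach is essentially the paper's: from $[y_1,y_i]=0$ with $y_1=e+\rho a$ and $y_i=-X^i_{\eta_i}+v_i$ one extracts $[e,v_i]=\rho[a,X^i_{\eta_i}]$, then pairs against $X^j_{\eta_j-1}$ and uses invariance of $\bil{\cdot}{\cdot}$ together with $[e,X^j_{\eta_j-1}]=2\eta_j X^j_{\eta_j}$ to obtain $\rho\bil{[a,X^i_{\eta_i}]}{X^j_{\eta_j-1}}=-2\eta_j\bil{v_i}{X^j_{\eta_j}}$, and symmetrizing in $i,j$ gives the claim via \eqref{Opp Cartan}. Your concern about the cross-term $[a,v_i]$ is easily dispatched by the grading argument you already sketched (its Dynkin degree is $2\eta_i-4\kappa-2<-2\kappa$, so it vanishes outright), and you will not need \eqref{sl2bilinear} at all---the constant $1/\rho$ falls out directly from dividing by $\rho$ and $2\eta_j$.
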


\begin{proof}

We note that  the commutator  of $y_1= e+\rho X_{-\kappa}^{r-1}$ and $y_i=v_i-X_{\eta_i}^i$ gives the relation
\begin{equation}
[e,v_i]=\rho [a,X_{\eta_i}^i], ~ i=1,...,r.
\end{equation}
This  in turn give the following equality for every  $i,~j=1,...,r$
\begin{eqnarray}
\bil {\rho[a,X_{\eta_i}^i]}{X_{\eta_j-1}^j}&=&\bil {[e,v_i]}{X_{\eta_j-1}^j}=-\bil {v_i}{[e,X_{\eta_j-1}^j]}\\\nonumber &=& -2 \eta_j \bil {v_i}{X_{\eta_j}^j}
\end{eqnarray}
but then
\begin{equation}
{\bil {[a,X_{\eta_i}^i]}{X_{\eta_j-1}^j}\over 2\eta_j }+{\bil {[a,X_{\eta_j}^j]}{X_{\eta_i-1}^i}\over 2 \eta_i}= -{1\over \rho}(\bil {v_i}{X_{\eta_j}^j}+\bil {v_j}{X_{\eta_i}^i})= {1\over \rho} A_{ij}.
\end{equation}
\end{proof}

\subsection{Slodowy coordinates}

We  review the relation between subregular nilpotent elements  and deformation of simple hypersurface singularities \cite{sldwy2}.

 Let $G$ be the adjoint group of $\g$. By  Chevalley theorem, the algebra $S(\g^*)^{G}$ of invariant polynomials under the adjoint  action of $G$ is generated by  $r$ homogenous polynomials of degrees $\eta_i+1,~i=0,...,r-1$. The inclusion homomorphism \[S(\g^*)^{G}\hookrightarrow S(g^*),\] is dual to a morphism \[\chi:\g\to \g/{G}\] called \textbf{the adjoint quotient}. We fix a homogenous generator $\chi^0,...,\chi^{r-1}$   of the algebra $S(\g^*)^{G}$ with degree $\chi^i$ equals $\eta_i+1$. Then the adjoint quotient  map is given by  \beq \chi=(\chi^0,\ldots,\chi^{r-1}):\g\to \mathbb{C}^r.\eeq
The fiber $\mathcal{N}:=\chi^{-1}(\chi(0))$ is called the \textbf{nilpotent variety of $\g$}, it consists of all  nilpotent elements of $\g$.

We define  the \textbf{Slodowy slice} to be  the affine subspace  \beq Q=e+\g^f\eeq
where $\g^f=\ker \ad~f$.
Then Brieskorn proved  that the restriction of  $\chi$ to $Q$ is semiuniversal  deformation of the simple hypersurface singularity $\mathcal{N}\cap Q$ which is of the same type as $\g$. Let us introduce the  following coordinates on  $Q$
\[\sum_{i=1}^{r+2} z^i X_{-\eta_i}^i+e\in {Q}\] and  assign degree $ 2 \eta_i+2$ to $z^i$.

\begin{prop}(\cite{sldwy1},section 7.2)
Let $\chi^i,~i=0,...,r-1$ be a homogenous generators of the ring $S(\g^*)^{G}$. Then the restriction of $\chi^i$ to $Q$  will be quasihomogenous of degree $2\eta_i+2$.
\end{prop}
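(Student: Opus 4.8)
The plan is to exploit the contracting $\mathbb{C}^*$-action on $\g$ that fixes $e$ and retracts the Slodowy slice $Q$ onto it, in the spirit of Slodowy. Let $\gamma:\mathbb{C}^*\to G$ be the one-parameter subgroup of the adjoint group with $\tfrac{d}{dt}\big|_{t=1}\gamma(t)=h$, so that $\Ad\,\gamma(t)$ acts on the Dynkin component $\g_k$ by the scalar $t^k$. Define $\phi_t:\g\to\g$ by $\phi_t(q)=t^{-2}\,\Ad\,\gamma(t)(q)$. Since $e\in\g_2$ we get $\phi_t(e)=e$; and since $\g^f$ is a sum of $\ad\,h$-eigenspaces (recall $\g^f=\ker\ad\,f$ is spanned by the lowest weight vectors $X^j_{-\eta_j}\in\g_{-2\eta_j}$), $\Ad\,\gamma(t)$ preserves $\g^f$. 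Hence $\phi_t$ preserves $Q=e+\g^f$. Note also that every Dynkin degree occurring is even, since $\g_k=0$ for odd $k$, so the weights below are genuine integers.

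Next I would compute the action of $\phi_t$ on the Slodowy coordinates. From $\Ad\,\gamma(t)\,X^i_{-\eta_i}=t^{-2\eta_i}X^i_{-\eta_i}$ we obtain
\[
\phi_t\Big(e+\sum_{i}z^i X_{-\eta_i}^i\Big)=e+\sum_i t^{-2\eta_i-2}\,z^i X_{-\eta_i}^i,
\]
so $\phi_t$ acts on the coordinate function $z^i$ by $z^i\mapsto t^{-2\eta_i-2}z^i$. Writing $s=t^{-1}$, this is precisely the scaling that assigns $z^i$ the weight $2\eta_i+2$, i.e. the quasihomogeneous grading on $Q$ fixed in the statement.

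Finally I would use the two defining properties of the generator $\chi^i$. It is homogeneous of degree $\eta_i+1$ for the linear scaling of $\g$, so $\chi^i(t^{-2}q)=t^{-2\eta_i-2}\chi^i(q)$, and it is $G$-invariant, so $\chi^i(\Ad\,\gamma(t)q)=\chi^i(q)$. Combining, $\chi^i(\phi_t(q))=t^{-2\eta_i-2}\chi^i(q)$ for all $q\in\g$, in particular for $q\in Q$. Writing $\chi^i|_Q$ as a polynomial $P_i(z^1,\dots,z^{r+2})$ and comparing with the coordinate action above yields
\[
P_i\big(s^{2\eta_1+2}z^1,\dots,s^{2\eta_{r+2}+2}z^{r+2}\big)=s^{2\eta_i+2}\,P_i(z^1,\dots,z^{r+2}),
\]
which is exactly the asserted quasihomogeneity of $\chi^i|_Q$ of degree $2\eta_i+2$.

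There is no essential obstacle: the entire content is the observation that $Q$ carries a $\mathbb{C}^*$-action for which $e$ is the attracting fixed point and which is compatible with the linearity and $G$-invariance of the invariant polynomials. The only point demanding care is the bookkeeping of conventions — the precise normalization $\phi_t=t^{-2}\Ad\,\gamma(t)$ (as opposed to its inverse) and the resulting signs of the exponents — together with the already-noted parity remark that makes all the degrees $2\eta_i+2$ consistent with the assignments made earlier.
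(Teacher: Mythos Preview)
Your argument is correct and is exactly the standard Slodowy contracting $\mathbb{C}^*$-action proof: the paper does not supply its own proof of this proposition but simply cites \cite{sldwy1}, section 7.2, and what you have written is precisely the argument found there. There is nothing to add beyond your own caveat about normalizations.
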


\begin{prop}\label{sldwy coord}
The map $\chi|_{Q}$ has rank $r-1$ at $e$. We can normalize  the  modules $V^i$ and choose  a homogenous generators  $\chi^0,...,\chi^{r-1}$   for  $S(\g^*)^{G}$  such that the restriction $t^i$ of $\chi^i$ to $Q$  with $i>0$ take the form
 \beq\label{inv gene}
t^i=z^i+ ~ {\textrm{ non linear terms}}.
 \eeq
In particular, setting \beq
t^{r+i}=z^{r+i},~i=0,1,2\eeq  we get a quasihomogenous coordinates $ (t^1,...,t^{r+2})$
 on $Q$ with degree $t^i$ equals degree $z^i$.
\end{prop}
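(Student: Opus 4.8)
The plan is to combine the quasihomogeneity statement of the preceding proposition (restriction of $\chi^i$ to $Q$ is quasihomogeneous of degree $2\eta_i+2$) with a linear‑algebra analysis of the differential of $\chi|_Q$ at the point $e$. First I would compute $d(\chi|_Q)$ at $e$. Since $\chi^i$ is $G$‑invariant, $d\chi^i_0 = 0$, so the linear part of $t^i=\chi^i|_Q$ in the coordinates $z^j$ is governed by the Hessian of $\chi^i$ at $0$ paired against $e$; equivalently, writing $t^i$ as a quasihomogeneous polynomial in the $z^j$ of weighted degree $2\eta_i+2$, its linear-in-$z$ part can only involve $z^j$ with $2\eta_j+2 = 2\eta_i+2$, i.e. $\eta_j=\eta_i$. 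So the differential of $\chi|_Q$ at $e$, expressed in the $z$‑coordinates, is block‑diagonal according to the value of the weight $\eta$.

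Next I would pin down the rank. The target coordinates are indexed by $i=0,\dots,r-1$, i.e. by the exponents $Et(\g)=\{\eta_0,\dots,\eta_{r-1}\}$, while the source $Q$ carries coordinates $z^1,\dots,z^{r+2}$ indexed by $Wt(e)=\{\eta_1,\dots,\eta_{r+2}\}$. Inspecting the table: the exponent $\eta_0$ is the Coxeter number minus one ($2r-3$, $11$, $17$, $29$), which does not appear among $\eta_1,\dots,\eta_{r+2}$ — hence the component $\chi^0|_Q$ has zero linear part in every $z^j$ and contributes nothing to the rank. For the remaining $r-1$ components $\chi^1,\dots,\chi^{r-1}$, their weights are exactly $\eta_1,\dots,\eta_{r-1}$, each of which does occur among the $z$‑weights. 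Using the freedom to rescale the modules $V^i$ (which rescales the $z^i$) and to take $\Complex$‑linear combinations of generators of $S(\g^*)^{G}$ of equal degree, together with the nondegeneracy of the Hessian pairing coming from the $sl_2$‑theory (this is where one imports the normalization of Proposition~\ref{reg:sl2:normalbasis} and the ``simple modification of \cite{sldwy1}'' mentioned before the proposition), I would diagonalize each weight‑block so that $t^i = z^i + (\textrm{higher weighted-degree terms in } z)$ for $i=1,\dots,r-1$. This establishes both that $\chi|_Q$ has rank $r-1$ at $e$ and the normal form \eqref{inv gene}.

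Finally, to get the quasihomogeneous coordinate system $(t^1,\dots,t^{r+2})$ on all of $Q$, I would supplement $t^1,\dots,t^{r-1}$ by the ``non‑invariant'' coordinates: take $t^i := z^i$ for the indices $i=r,r+1,r+2$ (so $t^{r}=z^r$, $t^{r+1}=z^{r+1}$, $t^{r+2}=z^{r+2}$, matching the statement's $t^{r+i}=z^{r+i}$ after reindexing). Since \eqref{inv gene} shows the Jacobian of $(t^1,\dots,t^{r-1},z^r,z^{r+1},z^{r+2})$ with respect to $(z^1,\dots,z^{r+2})$ is unipotent upper‑triangular with respect to the weight filtration, this is a polynomial change of coordinates with polynomial inverse, hence a global coordinate system on the affine space $Q$; and each $t^i$ is quasihomogeneous of the same weighted degree $2\eta_i+2$ as $z^i$ by construction. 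I expect the main obstacle to be the careful bookkeeping in the weight‑block diagonalization: one must check that within each block $\{j : \eta_j = \eta_i\}$ (which by the table has size one or two, the $D_4$ case being exceptional) the relevant Hessian minor is invertible, so that an appropriate choice of generators and module normalization really produces the clean form $t^i = z^i + \cdots$; this rests on the explicit $sl_2$‑module structure and is exactly the point where one invokes the cited computations of \cite{sldwy1}.
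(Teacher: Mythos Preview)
Your outline is close in spirit to the paper's, but the argument actually given differs on two points.

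First, the paper does not establish the rank-$(r-1)$ statement via any ``Hessian pairing from $sl_2$-theory.'' It simply quotes Slodowy (\cite{sldwy1}, \S 8.3): using the \emph{isolatedness of the singularity} $\mathcal N\cap Q$ together with the structure of $Wt(e)$ and $Et(\g)$, Slodowy shows one can pick three of the $z^j$, say $v^1,v^2,v^3$, of degrees $2\eta_r+2,\,2\eta_{r+1}+2,\,2\eta_{r+2}+2$, so that $(t^1,\dots,t^{r-1},v^1,v^2,v^3)$ are homogeneous coordinates on $Q$. The block-nondegeneracy you want is a \emph{consequence} of this, not an independent input coming from the $sl_2$-normalization of Proposition~\ref{reg:sl2:normalbasis}.

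Second, the paper then does an explicit \emph{type-by-type} check to show that, with the module labeling already fixed by the table (in particular $V^1\cong\A$), the three complementary coordinates are precisely $z^r,z^{r+1},z^{r+2}$ and not some other $z^j$ of the same weight. For $E_8$ all entries of $Wt(e)$ are distinct and nothing needs checking; for $D_r$ one observes that the quadratic Casimir $\chi^1$ restricts to a constant multiple of $z^1$, so the leftover weight-$1$ coordinate is $z^{r+1}$ rather than $z^1$; for $E_6$ (resp.\ $E_7$) one verifies by direct computation that $\chi^3$ (resp.\ $\chi^2$) depends explicitly on $z^3$ (resp.\ $z^2$). You bypass this by appealing to the freedom to relabel modules, but since the paper has already committed to the specific ordering of $Wt(e)$, that freedom is constrained, and the case analysis is how the paper actually closes the argument.
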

 \begin{proof}
Let  $\chi^0,...,\chi^{r-1}$ be a homogenous generators for $S(\g^*)^{G}$  and  denote $t^1,...,t^{r-1}$ their  restriction to $Q$. Using   the structure of the set $Wt(e)$ and $Et(\g)$ together with  the isolatedness of the singularity of $\mathcal{N}\cap Q$, Slodowy proved the following \cite{sldwy1} (section 8.3). We can  choose a coordinates $v^1,v^2,v^3$ from $z^1,....,z^{r+2}$ of degrees $2\eta_r+2,2\eta_{r+1}+2,2\eta_{r+2}+2$, respectively,  such that  \[(t^1,\ldots, t^{r-1},v^1,v^2,v^3)\] are homogenous coordinates on $Q$. Therefore, the statement follows upon proving that $v_i=z^{r+i}$ for $i=0,1,2$ when considering the normalization of proposition \ref{reg:sl2:normalbasis}. This is obvious for the Lie algebra $E_8$ since the numbers in the set $Wt(e)$ are all different. It is also true for the Lie algebra $D_r$, $r$ is even, since the  restriction of the first invariant $\chi^1$ to $S$ is, up to constant, equal to $z^1$. For the Lie algebra $E_6$ (respectively $E_7$) we verify by direct computation that the invariant $\chi^3$ (resp. $\chi^2$) depends explicitly on $z^3$ (resp. $z^2$).
 \end{proof}
We will call the coordinates $(z^1,\ldots, z^{r+2})$ on $Q$ obtained in this proposition  \textbf{Slodowy coordinates}. We observe  that in this coordinates the quotient map take the form
\beq
\chi|_{Q}:(t^1,....,t^{r+2})\mapsto(t^0,t^1,...,t^{r-1})
\eeq
where $t^0$ is the restriction to ${Q}$ of the invariant polynomial $\chi^0$. Setting $t^1,...,t^r$ equal zero in $t^0$ we get, from the quasihomogeneity,  a polynomial function  $f(t^r,t^{r+1},t^{r+2})$ of the form shown in the table below (we lower the index for convenience). Here $c_1,c_2,c_3,c_4$ are some constants.  Note that  the hypersurface $\mathcal{N}\cap Q$ will be given by setting $f(t^r,t^{r+1},t^{r+2})=0$. Moreover, from  the isolatedness of the singularity the numbers  $c_1,c_2,c_3$ are nonzero constants. The constant  $c_4$ could be eliminated by change of variables to obtain  the standard equation  defining the simple hypersurface singularity.

 \begin{center}\label{plystable}
\begin{tabular}{|c|c|}\hline
$\g$ &  $f(t_r,t_{r+1},t_{r+2})$\\

\hline
$D_r$ & $c_1 t_{r+1}^{r-1}+c_2 t_{r+1} t_{r}^2+c_3 t_{r+2}^2+ c_4 t_{r+1}^{r\over 2} t_{r}$\\
$E_6$& $ c_1 t_{6}^4+ c_2 t_{7}^3+ c_3 t_{8}^2+c_4 t_{6}^2 t_{8}$\\
$E_7$ & $c_1 t_{7}^3 t_{8}+c_2 t_{8}^3+c_3 t_{9}^2$\\
$E_8$ & $c_1 t_8^5+c_2 t_9^3+c_3 t_{10}^2$\\\hline

\end{tabular}
\end{center}

\section{Drinfeld-Sokolov reduction}

In this section we review the construction of the classical $W$-algebra associated to the nilpotent element $e$ using Drinfeld-Sokolov reduction.

 We use the Dynkin grading of $e$ to define the following subalgebras    \begin{eqnarray} \bneg&:=&\bigoplus_{i\leq 0} \g_i,\\\nonumber \nneg&:=&\bigoplus_{i\leq
{-2}}\g_i=[\bneg,\bneg].\end{eqnarray}
Then  we consider  the action of  the adjoint group $\gauge$ of $\lop {\nneg}$ on $\lop \g$ defined by
\begin{eqnarray}
q(x)&\rightarrow& \exp \ad~ s(x)( \partial_x+q(x))-\partial_x
\end{eqnarray}
where $s(x) \in \lop \nneg,~~q(x)\in \lop \g$.

Let us extend the invariant bilinear form from $\g$ to $\lop \g$ by setting
\begin{equation} (u|v)=\int_{S^1}\bil {u(x)}{v(x)} dx,~ u,v \in \lop M.
\end{equation}
Then we identify $\lop\g$ with $\lop \g^*$ by means of this bilinear form.  We define the gradient $\delta \f (q)$ for a functional $\f$ on $\lop\g$
 to be the unique element in
$\lop\g$ satisfying
\begin{equation}
\frac{d}{d\theta}\f(q+\theta
\dot{s})\mid_{\theta=0}=\int_{S^1}\langle\delta \f|\dot{s}\rangle dx
~~~\textrm{for all } \dot{s}\in \lop\g.
\end{equation}
We fix on $\lop \g$ the  following Poisson bracket
\begin{eqnarray}\label{bih:stru on g}
\{\f[q(x)],\I [q(y)]\}&=&{1\over \eps}\big(\delta\f(x)|[\eps \partial_x+q(x),\delta \I(x)]\big)
\end{eqnarray}
for every functional $\f$ and $\I$  on $\lop \g$.
\begin{prop}(\cite{mypaper})\label{DS as momentum }
The action of $\gauge$ on $\lop\g$ with Poisson bracket $\{.,.\}$ is
 Hamiltonian. It admits a momentum map $J$ to be
the projection \[J:\lop\g\to\lop \npos\] where  $\npos$ is the image of $\nneg$ under the Killing map. Moreover, $J$ is $\Ad^*$-equivariant.
\end{prop}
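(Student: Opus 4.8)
The plan is to recognise the statement as the standard realisation of Drinfeld--Sokolov gauge reduction as a Hamiltonian group action and to prove it by a short computation with the bilinear form; throughout I set $\eps=1$ for notational ease, the general case following by rescaling. First I would identify the Lie algebra of $\gauge$ with $\lop\nneg$ (acting by $\ad$) and compute the fundamental vector field of the action. Differentiating $q\mapsto\exp(\ad\,s)(\partial_x+q)-\partial_x$ (conjugation of the operator $\partial_x+q$ by $\exp s$) at $s=0$, and using $[s,\partial_x]=-\partial_x s$ as an operator, gives the infinitesimal generator $\xi_s(q)=[\partial_x+q,-s]=[s,q]-\partial_x s$ for $s\in\lop\nneg$. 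Now set $H_s(q):=(s|q)=\ine\bil{s(x)}{q(x)}\,dx$; being linear in $q$, this functional has constant gradient $\delta H_s=s$, so
\begin{align*}
\{H_s,\I\}(q)&=\big(s\,\big|\,[\partial_x+q,\delta\I]\big)=\big(s\,\big|\,\partial_x\delta\I\big)+\big(s\,\big|\,[q,\delta\I]\big)\\
&=\big(-\partial_x s+[s,q]\,\big|\,\delta\I\big),
\end{align*}
where I integrated by parts on $S^1$ and used the invariance of $\bil{\cdot}{\cdot}$. Comparing with the formula for $\xi_s$, the Hamiltonian vector field of $H_s$ is exactly $\xi_s$; hence $s\mapsto H_s$ is a Hamiltonian lift of the $\gauge$-action, and in particular this action is Hamiltonian.

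Next I would identify the momentum map. By definition $J\colon\lop\g\to\lop{\nneg^*}$ is determined by $\langle J(q),s\rangle=H_s(q)$ for all $s\in\lop\nneg$. Since $\g_i=0$ for $i$ odd and $\bil{\g_i}{\g_j}\neq0$ forces $i+j=0$, the annihilator of $\nneg=\bigoplus_{i\le-2}\g_i$ under $\bil{\cdot}{\cdot}$ is exactly $\bneg=\bigoplus_{i\le0}\g_i$; hence $\g=\bneg\oplus\npos$ with $\npos=\bigoplus_{i\ge2}\g_i$, and $\bil{\cdot}{\cdot}$ restricts to a perfect pairing $\npos\times\nneg\to\Complex$, which realises $\nneg^*$ as the subspace $\npos$ of $\g$ figuring in the statement. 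Under this identification, writing $q=\pi_{\npos}(q)+\pi_{\bneg}(q)$ and using $\bil{\bneg}{\nneg}=0$ one gets $\bil{q}{s}=\bil{\pi_{\npos}(q)}{s}$ for all $s\in\nneg$; integrating over $S^1$ then gives $\langle J(q),s\rangle=(\pi_{\npos}(q)|s)$, so $J=\pi_{\npos}$ is the pointwise projection $\lop\g\to\lop\npos$ along $\lop\bneg$, as claimed.

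For the $\Ad^*$-equivariance I would argue at the group level. For $g=\exp s$ with $s\in\lop\nneg$ one has $g\cdot q=e^{\ad\,s}q-\tfrac{e^{\ad\,s}-1}{\ad\,s}(\partial_x s)$; the second summand lies in $\lop\nneg\subset\lop\bneg$ since $\nneg$ is a subalgebra, and $e^{\ad\,s}\pi_{\bneg}(q)\in\lop\bneg$ since $\bneg$ is a subalgebra containing $s$. Because $\pi_{\npos}$ annihilates $\lop\bneg$, this gives $J(g\cdot q)=\pi_{\npos}\big(e^{\ad\,s}q\big)=\pi_{\npos}\big(e^{\ad\,s}\pi_{\npos}(q)\big)$, which is exactly the coadjoint action of $g$ on $J(q)$ under the identification $\nneg^*\cong\npos$ above; thus $J$ is $\Ad^*$-equivariant. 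Equivalently one could verify the infinitesimal identity $\{H_s,H_t\}=-H_{[s,t]}$ directly; no central term occurs because the only candidate, $\ine\bil{\partial_x s}{t}\,dx$, vanishes, $\nneg$ being isotropic for $\bil{\cdot}{\cdot}$ by the same grading argument.

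The routine part is the integration-by-parts identity in the first step. The point that really has to be got right---and on which both the correct target $\lop\npos$ of $J$ and its cocycle-free (hence $\Ad^*$-equivariant) character rest---is the pair of structural facts $\nneg^\perp=\bneg$ and $\bil{\nneg}{\nneg}=0$, which are consequences of the vanishing of the odd Dynkin components together with $\bil{\g_i}{\g_j}\neq0\Rightarrow i+j=0$. Keeping that grading bookkeeping straight is the main thing to watch.
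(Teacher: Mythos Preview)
Your argument is correct. The paper itself does not supply a proof of this proposition at all: it is simply quoted from the author's earlier work \cite{mypaper}, so there is no in-paper proof to compare against. What you have written is the standard verification that the Drinfeld--Sokolov gauge action is Hamiltonian with equivariant moment map given by projection onto $\npos\cong\nneg^*$, and each step checks out: the computation of the fundamental vector field, the identification $\delta H_s=s$ and the integration by parts showing $X_{H_s}=\xi_s$, the grading argument giving $\nneg^\perp=\bneg$ and hence $J=\pi_{\npos}$, and the equivariance via $\pi_{\npos}(g\cdot q)=\pi_{\npos}(e^{\ad s}\pi_{\npos}(q))$ together with the vanishing of the potential cocycle term because $\bil{\nneg}{\nneg}=0$. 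Your closing remark correctly isolates the one structural input, namely the grading facts $\bil{\g_i}{\g_j}=0$ for $i+j\neq0$ and $\g_{\mathrm{odd}}=0$, on which both the description of the target of $J$ and the absence of a central extension depend.
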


We take $e$ as a regular value of $J$. Since $\bneg$ is the orthogonal complement to $\nneg$ under $\bil . .$, we get the affine space  $S=J^{-1}(e)=\lop\bneg+e$. Moreover, it follows from the Dynkin grading that  the isotropy group  of $e$ is $\gauge$. Let $R$ be the ring of invariant differential polynomials of $S$ under the action of $\gauge$. Then,   from Marsden-Ratiu reduction theorem, the set $\mathcal{R} $ of functionals on $\s$ which have densities in the ring $R$ is closed under the Poisson brackets $\{.,.\}$.

Let us define the space ${\widetilde{Q}}$  to be the Slodowy slice
\beq
{\widetilde{Q}}:=e+\lop {\g^f}.
\eeq
The following proposition  identifies the space  $S/\gauge$  with  ${\widetilde{Q}}$.
\begin{prop}\cite{mypaper}
The space ${\widetilde{Q}}$
is a cross section for the action of $\gauge$ on $\s$, i.e for any element $q(x)+e\in \s$ there is a unique element $s(x) \in
\lop \nneg$ such that  \begin{equation}\label{gauge fix} z(x)+e=\exp \ad~s(x)( \partial_x+q(x))-\partial_x \in {\widetilde{Q}}.\end{equation} Therefore, the entries of $z(x)$ are generators of the ring
$R$.
\end{prop}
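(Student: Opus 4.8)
The plan is to carry over to the loop space the classical argument that the Slodowy slice is a gauge section for Drinfeld--Sokolov reduction; everything will follow from an induction on the Dynkin grading. Two elementary facts about the adjoint representation of $\A$ will be used throughout. First, in each irreducible $\A$-submodule the weight-$i$ subspace is the image under $\ad e$ of the weight-$(i-2)$ subspace, together with the lowest weight line precisely when $i$ is the lowest weight; summing over the $\A$-submodules in \eqref{decompo} this gives the graded splitting $\g_i=[e,\g_{i-2}]\oplus(\g^f\cap\g_i)$ for every $i$. Second, $\ad e$ is injective on $\g_i$ for every $i<0$, since $\ker\ad e=\g^e$ is concentrated in nonnegative Dynkin degrees. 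Because $\bneg=\bigoplus_{i\le 0}\g_i$, $\nneg=\bigoplus_{i\le -2}\g_i$ and $\g^f\subset\bneg$, summing the first fact over $i\le 0$ yields $\bneg=\g^f\oplus[e,\nneg]$ as graded spaces, with $\ad e$ mapping $\nneg$ isomorphically onto $[e,\nneg]$; note also that for the lowest nonzero graded piece of $\bneg$ the summand $[e,\g_{i-2}]$ vanishes, so that piece already lies in $\g^f$.

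For existence, I would write a point of $\s$ as $\partial_x+e+q(x)$ with $q(x)=\sum_{i\le 0}q_i(x)$, $q_i(x)\in\lop{\g_i}$, and remove the $[e,\nneg]$-part of $q(x)$ one Dynkin degree at a time, starting from $i=0$ and decreasing. Assume that after finitely many steps the connection has been brought to the form $\partial_x+e+\sum_{i\le 0}P_i(x)$ with $P_i(x)\in\lop{\g^f\cap\g_i}$ for all $i$ in a top range $i_0<i\le 0$. Conjugating by $\exp\ad\,\sigma(x)$ with $\sigma(x)\in\lop{\g_{i_0-2}}\subset\lop{\nneg}$ replaces the degree-$i_0$ component by $P_{i_0}(x)+[\sigma(x),e]$ and, as a short check of the degrees of the remaining terms in $\exp\ad\,\sigma(x)(\partial_x+e+\sum P_i)-\partial_x$ shows, leaves every component of degree $>i_0$ unchanged; by the graded splitting and the injectivity of $\ad e$ on $\g_{i_0-2}$ there is a unique $\sigma(x)$ with $P_{i_0}(x)+[\sigma(x),e]\in\lop{\g^f\cap\g_{i_0}}$. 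Since $\bneg$ has only finitely many graded pieces and the bottom one is automatically in $\g^f$, the process terminates and produces $z(x)+e:=\exp\ad\,s(x)(\partial_x+q(x)+e)-\partial_x\in e+\lop{\g^f}=\widetilde{Q}$, where $\exp\ad\,s(x)$ is the composite of the gauge transformations used and $s(x)\in\lop{\nneg}$.

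For uniqueness it suffices, by composing with an inverse gauge transformation, to prove that if $L\in\widetilde{Q}$ and the gauge transform of $L$ by $\exp\ad\,s(x)$ again lies in $\widetilde{Q}$ for some $s(x)\in\lop{\nneg}$, then $s(x)=0$. Write $s(x)=\sum_{j\le -2}s_j(x)$ with $s_j(x)\in\lop{\g_j}$ and suppose $s_{j_0}\neq 0$ with $j_0$ maximal. The difference between the transform of $L$ and $L$ lies in $\lop{\g^f}$ since both are in $\widetilde{Q}$; but a degree count, using $L\in e+\lop{\g^f}$ and $j_0\le -2$, shows that its component of top degree $j_0+2$ equals $[s_{j_0}(x),e]\in\lop{[e,\g_{j_0}]}$. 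As $[e,\g_{j_0}]\cap\g^f=0$, this forces $[s_{j_0}(x),e]=0$, hence $s_{j_0}=0$ by injectivity of $\ad e$ on $\g_{j_0}$, a contradiction. So $s(x)=0$, and the element $s(x)$ in \eqref{gauge fix} is unique.

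Finally, at each stage $\sigma(x)$ is obtained by inverting $\ad e$ on a right-hand side that is a differential polynomial in the entries of $q(x)$ and in the previously determined parameters; hence $s(x)$, and therefore $z(x)$, is a differential polynomial in the entries of $q(x)$, and by construction it is constant on $\gauge$-orbits, so the entries $z^i(x)$ lie in the ring $R$. Conversely, any $\gauge$-invariant differential polynomial on $\s$ equals its restriction to $\widetilde{Q}$ precomposed with the differential-polynomial map $\s\to\widetilde{Q}$, $q\mapsto z$, so it is a differential polynomial in the $z^i(x)$; thus the entries of $z(x)$ generate $R$. I expect the only delicate points to be phrasing the two $sl_2$-theoretic facts in the correct graded form, the bookkeeping that a conjugation in degree $i_0-2$ does not disturb higher degrees, and the observation that the gauge parameter is polynomial --- not merely smooth --- in the fields, this last being exactly what makes $R$ genuinely generated by the $z^i(x)$ and their $x$-derivatives.
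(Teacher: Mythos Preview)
Your argument is correct. The paper does not actually prove this proposition in the text: it is quoted from \cite{mypaper} and immediately used, so there is no in-paper proof to compare against line by line. That said, your approach---using the $sl_2$-representation-theoretic splitting $\g_i=[e,\g_{i-2}]\oplus(\g^f\cap\g_i)$ together with the injectivity of $\ad e$ on negative Dynkin degrees, and then gauge-fixing by descending induction on the grading---is exactly the classical Drinfeld--Sokolov gauge-fixing argument that underlies the cited result, so your proof is in the same spirit as what the paper relies on. The existence step, the uniqueness step via the top-degree contradiction, and the polynomiality of $s(x)$ and $z(x)$ in the entries of $q(x)$ are all handled correctly; the only cosmetic point is that the process terminates simply because $\bneg$ has finitely many graded pieces, and your remark that the bottom piece already lies in $\g^f$ is true but not strictly needed.
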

Hence, the space ${\widetilde{Q}}$ has a Poisson structure  $\{.,.\}^{\widetilde{Q}}$ from $\{.,.\}$. This Poisson  bracket is known in the literature as  \textbf{classical $W$-algebra} associated to $e$. For a formal definition of classical $W$-algebras see \cite{fehercomp}.

Let us  obtain the linear terms of the invariants $z^i(x)$. We introduce a parameter $\tau$ and write
\begin{eqnarray*}
q(x)+ e&=& \tau \sum_{i=1}^{r+2} \sum_{I=0}^{\eta_i} q_{i}^I X_{-I}^{i}+e\in \s\\
z(x)+e &=&\tau\sum_{i=1}^{r+2} z^i(x) X_{-\eta_i}^i+e\in {\widetilde{Q}}\\
s(x)&=&\tau\sum_{i=1}^{r+2}\sum_{I=1}^{\eta_i} s_{i}^{I}(x) X_{-I}^{i} \in \lop\nneg.
\end{eqnarray*}
Then equation \eqref{gauge fix} expands to
\begin{equation}\label{gauge fixing data }
\begin{split}
\sum_{i=1}^{r+2} z^i(x) X_{-\eta_i}^i+&\sum_{i=1}^{r+2}\sum_{I=1}^{\eta_i}(\eta_i-I+1) s_{i}^{I} X_{-I+1}^{i}=\\&\sum_{i=1}^{r+2} \sum_{I=0}^{\eta_i} q_{i}^I(x) X_{-I}^{i}-\sum_{i=1}^{r+2}\sum_{I=1}^{\eta_i} \partial_x s_{i}^{I}(x) X_{-I}^{i}+\mathcal{O}(\tau).
\end{split}
\end{equation}
Hence,  any invariant $z^i(x)$ will take the from
\begin{eqnarray}\label{leading terms1}
z^i(x)&=&q_i^{\eta_i}-\partial_x s_i^{\eta_i}+\mathcal{O}(\tau)\\\nonumber
&=& q_i^{\eta_i}(x)-\partial_x q_{i}^{\eta_i-1}+\mathcal{O}(\tau).
\end{eqnarray}
Furthermore, using     $\bil {e}{f}=1$ we get

\begin{eqnarray}
z^1(x)&=& q_1^1(x) -\partial_x s_1^1+ \tau \bil {e}{[s_i^1(x) X_{-1}^i, q_{i}^0 X_0^i]}\\\nonumber & &+{1\over2}\tau \bil{e}{[s_i^1(x) X_{-1}^i,[s_i^I(x) X_{-1}^i,e]]}\\\nonumber
       &=& q_1^1(x)- \partial_x q_1^0(x)+{1\over 2} \tau \bil {e}{[s_i^1(x) X_{-1}^i, q_{i}^0 X_0^i]}                              \\\nonumber
&=& q_1^1(x) -\partial_x q_1^0(x)+{1\over 2} \tau\sum_i (q_i^0(x))^2\bil{X_0^i}{ X_0^i}.
\end{eqnarray}
The invariant $z^1(x)$ is known in the literature as the  \textbf{Virasoro density}.

We observe that the reduced Poisson  structure could be obtained   as follows. We write the coordinates of ${\widetilde{Q}}$ as a differential polynomials in the coordinates of $S$ using   equation \eqref{gauge fix} and then we apply the Leibnitz rule. The Leibnitz rule for  $u,v \in R$  have the following form
\begin{equation}
\{u(x),v(y)\}={\partial u(x)\over \partial (q_i^I)^{(m)}}\partial_x^m\Big({\partial v(y)\over \partial (q_j^J)^{(n)}} \partial_y^n\big(\{q_i^I(x),q_j^J(y)\}\big)\Big).
\end{equation}

Our analysis for the Poisson brackets will relay on the quasihomogeneity of the invariants  $z^i(x)$ in  the coordinates of $q(x)\in \lop \bneg$ and their derivatives.
\begin{lem}\label{lin inv poly}
If we assign degree $2 J+2l+2$ to  $\partial_x^l(q_i^J(x))$  then $z^i(x)$ will be quasihomogenous of degree $2\eta_i+2$. Furthermore, each invariant $z^i(x)$ depends linearly only on $q_i^{\eta_i}(x)$ and $\partial_x q_{i}^{\eta_i-1}(x)$, i.e
\beq
z^i(x)= q_i^{\eta_i}(x)-\partial_x q_{i}^{\eta_i-1}+\textrm{nonlinear terms}.
\eeq
Furthermore
\beq
z^1(x)= q_1^1(x) -\partial_x q_1^0(x)+{1\over 2} \sum_i (q_i^0(x))^2\bil{X_0^i}{ X_0^i}.
\eeq
In particular, $z^i(x)$ with $i\neq r-1$ does not  depend on $q^{\kappa}_{r-1}(x)$ or its derivatives.
\end{lem}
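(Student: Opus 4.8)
The plan is to derive all the assertions from the gauge-fixing identity \eqref{gauge fix}, bookkeeping two compatible gradings at once: the Dynkin $\Z$-grading $\g=\bigoplus_j\g_j$ and the weight assignment $\deg(\partial_x^l q_i^J)=2J+2l+2$.

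\emph{Quasihomogeneity.} I would first promote this weight to a $\Complex^*$-action. For $t\in\Complex$ let $\Phi_t$ act on $\lop\g$ by $q(x)\mapsto e^{-2t}\,\Ad(\exp(th))\big(q(e^{-2t}x)\big)$, i.e.\ combine $\Ad(\exp(th))$ (which scales $\g_j$ by $e^{jt}$, hence scales $X_{-J}^i\in\g_{-2J}$ by $e^{-2Jt}$, using \eqref{sl2relation}) with the loop reparametrization $x\mapsto e^{-2t}x$ and the overall scalar $e^{-2t}$. Then $\Phi_t$ fixes $e\in\g_2$, so it preserves $\s=e+\lop\bneg$ and the Slodowy slice $\widetilde Q=e+\lop{\g^f}$, since $\bneg=\bigoplus_{j\le 0}\g_j$ and $\g^f=\bigoplus_i\Complex X_{-\eta_i}^i$ are $\Ad(\exp(th))$-invariant. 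Because $\Ad(\exp(th))$ respects the Dynkin grading it also preserves $\nneg$, and a short check shows $\Phi_t$ conjugates the $\gauge$-gauge action on $\s$ — the scalar $e^{-2t}$ is exactly what absorbs the rescaling of $\partial_x$, sending $\exp\ad\sigma(x)$ to $\exp\ad\sigma'(x)$ with $\sigma'(x)=\Ad(\exp(th))(\sigma(e^{-2t}x))\in\lop\nneg$. Hence the gauge-fixing map $\s\to\widetilde Q$, $q\mapsto z$, is $\Phi_t$-equivariant. In coordinates $\Phi_t$ sends $q_i^J(x)\mapsto e^{-(2J+2)t}q_i^J(e^{-2t}x)$, so $\partial_x^l q_i^J\mapsto e^{-(2J+2l+2)t}(\partial_x^l q_i^J)(e^{-2t}x)$, while $z^i(x)\mapsto e^{-(2\eta_i+2)t}z^i(e^{-2t}x)$; matching these two descriptions of the transformed $z^i$ monomial by monomial forces every monomial of $z^i$ to have weight exactly $2\eta_i+2$.

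\emph{The linear part.} Next I would isolate the part of \eqref{gauge fix} linear in the coordinates $q_i^J$. It reads $z^{\mathrm{lin}}=q-\partial_x s^{\mathrm{lin}}+[s^{\mathrm{lin}},e]$, and it decouples over the $\A$-submodules $V^i$ because, by \eqref{sl2relation}, both $\partial_x$ and $[\,\cdot\,,e]$ act inside a single $V^i$. On $V^i$ it becomes the triangular system $(\eta_i-m)\,s_i^{m+1}=q_i^m-\partial_x s_i^m$ for $0\le m\le\eta_i-1$ (with $s_i^0:=0$), together with $z^i=q_i^{\eta_i}-\partial_x s_i^{\eta_i}$. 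Solving downward, $s_i^I$ is a differential polynomial in $q_i^0,\dots,q_i^{I-1}$ only, and $s_i^{\eta_i}=q_i^{\eta_i-1}-\partial_x s_i^{\eta_i-1}$, whence $z^i=q_i^{\eta_i}-\partial_x q_i^{\eta_i-1}+(\text{further linear terms, all built from the }q_i^J\text{ of the same index }i)+(\text{nonlinear terms})$, which is \eqref{leading terms1}. For $\eta_1=1$ there are no further linear terms, and the quadratic correction is exactly the one computed just before the statement, giving the stated formula for $z^1$.

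\emph{Independence from $q_{r-1}^\kappa$.} Suppose a monomial of $z^i$ involved $\partial_x^l q_{r-1}^\kappa$ for some $i\ne r-1$. Every factor $\partial_x^{l'}q_j^{J}$ has weight $\ge 2$, and $\deg z^i=2\eta_i+2\le 2\kappa+2$ because $\kappa=\max Wt(e)$, while the factor $\partial_x^l q_{r-1}^\kappa$ by itself has weight $2\kappa+2l+2\ge 2\kappa+2$. Hence the quasihomogeneity of $z^i$ forces $l=0$, $\eta_i=\kappa$, and the monomial to be $q_{r-1}^\kappa$ alone, i.e.\ $q_{r-1}^\kappa$ would occur linearly in $z^i$; but the linear part of $z^i$ involves only the $q_i^J$ with the same index $i\ne r-1$, a contradiction (and for $D_4$, where $\kappa$ is attained also at $\eta_2$, the same argument works since the linear part of $z^2$ involves only $q_2^J$). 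The step I expect to cost the most work is the equivariance in the first part — checking that $\Phi_t$ genuinely conjugates the Drinfeld--Sokolov gauge action, i.e.\ keeping the two gradings synchronized through the exponential in \eqref{gauge fix}; the rest is the triangular recursion and the weight count.
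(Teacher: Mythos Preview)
Your proof is correct and, for the linear part and the $z^1$ formula, follows the same route the paper takes: the explicit expansion of \eqref{gauge fix} carried out in \eqref{gauge fixing data}--\eqref{leading terms1} and the lines just after (the paper states the lemma as a summary of those computations and gives no separate proof). Where you go beyond the paper is in the two claims it leaves unjustified: you prove quasihomogeneity via the $\mathbb{C}^*$-action $\Phi_t$ (the loop-space version of Slodowy's contracting action on the slice), whereas the paper simply asserts the degrees; and you make the independence from $q_{r-1}^\kappa$ explicit by a clean weight count, including the $D_4$ caveat where $\kappa$ is attained twice, whereas the paper only records it as an ``in particular''. You also correctly notice that the displayed formula for $z^i$ suppresses further linear terms (higher $x$-derivatives of $q_i^J$ with $J<\eta_i-1$, all with the same module index $i$); this is a mild imprecision in the lemma's wording, but harmless for the paper's later use in Proposition~\ref{nondeg}, which only needs the two leading linear terms and the fact that the linear part of $z^i$ stays inside $V^i$.
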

 We fix the following notations for the leading terms of the Poisson bracket
\begin{eqnarray} \{z^i(x),z^j(y)\}^{\widetilde{Q}}&=&
\sum_{k=-1}^\infty \epsilon^k \{z^i(x),z^j(y)\}^{[k]}_1 \\\nonumber
 \end{eqnarray}
 where
\begin{eqnarray}\label{reducedPB notations}
  \{z^i(x),z^j(y)\}^{[-1]} &=& F^{ij}(z(x))\delta(x-y) \\\nonumber
  \{z^i(x),z^j(y)\}^{[0]} &=& g^{ij}(z(x)) \delta' (x-y)+ \Gamma_{k}^{ij}(z(x)) z_x^k \delta (x-y)\\\nonumber
   \end{eqnarray}

\subsection{The nondegeneracy  condition}

  We want  to prove  that the  minor matrix $g^{mn}(z),~m,n=1,\ldots,r$ is nondegenerate generically on ${\widetilde{Q}}$. For this end we define the matrix
 \beq
 g^{ij}_1(z)=\partial_{z^{r-1}} g^{ij}(z)
 \eeq
  and we will prove that the minor matrix $g^{mn}_1(z),~m,n=1,\ldots,r$   is lower antidiagonal  with respect to the set $Et(e)$, i.e
  \begin{equation*}
g^{mn}_1 = \left\{
\begin{array}{rl}
0 & \text{if } \eta_m+\eta_n<\kappa+1 \\
{1\over \rho}A_{mn} & \text{if } \eta_m+\eta_n=\kappa+1
\end{array} \right.
\end{equation*}
where the matrix ${1\over \rho} A_{mn}$ is defined  by equation  \eqref{Opp Cartan} and its properties were obtained in proposition \ref{opp Cartan prop}.

 We recall that $z^{r-1}$ is the coordinate of the lowest weight root vector $a=X_{\kappa}^{r-1}$.  We denote  $\Xi_I^i$ the value $\bil {X_I^i}{X_I^i}$ and set
  \[[a,X_I^i]=\sum_j \Delta_I^{ij} X_{I-\kappa}^j.\]
   The fact that  $z^{r-1}(x)$  is the only invariant which depends on $q_{r-1}^{\kappa}(x)$ implies that the invariant  $z^{r-1}(x)$ will appear in the expression of $  \{z^i(x),z^j(y)\}^{\widetilde{Q}}$ only if, when using the Leibnitz rule, we encounter  terms of the original Poisson bracket $\{.,.\}$ depend explicitly on  $q_{r-1}^{\kappa}(x)$. The later appear as a result  of the  following ``brackets''
\begin{equation}
[q_j^{\kappa-I}(x),q_i^I(y)]:=q_{r-1}^{\kappa}(x) {\Delta^{ij}_I\over \Xi_I^i} \delta(x-y).
\end{equation}
Hence the dependence of $  \{z^i(x),z^j(y)\}^{\widetilde{Q}}$ on $z^{r-1}(x)$ can be evaluated by imposing the Leibnitz rule on the ``brackets'' above. We get
{\small \begin{eqnarray*}
[z^m(x),z^n(y)]&=& \sum_{i,I;j}\sum_{l,h}{\Delta_I^{ij}\over \Xi_I^i}{\partial z^m(x) \over \partial (q_j^{\kappa-I})^{(l)} }\partial_x^l\Big({\partial z^n(y) \over \partial (q_i^{I})^{(h)} }\partial_y^h(q_{r-1}^{\kappa}(x)\delta(x-y))\Big)\\\nonumber
&=&  \sum_{i,I;j}\sum_{l,h,\alpha,\beta} (-1)^h{h\choose \alpha} {l\choose \beta}{\Delta_I^{ij}\over \Xi_I^i}{\partial z^m(x) \over \partial (q_j^{\kappa-I})^{(l)} }\Big( q_{r-1}^{\kappa}(x)\Big({\partial z^n(x) \over \partial (q_i^{I})^{(h)} }\Big)^{(\alpha)}\Big)^{(\beta)}\delta^{(h+l-\alpha-\beta)}(x-y).
\end{eqnarray*}}
Here we omitted the ranges of the indices since no confusion can arise. We observe that  the coefficient of $\delta'(x-y)$ of this expression which contributes to  the value of $g^{mn}(z)$ is given by
\begin{eqnarray}
\mathbb{B}(z^m,z^n)&=&\sum_{i,I,J}\sum_{h,l}(-1)^h (l+h) {\Delta_I^{ij}\over \Xi_I^i}q_{r-1}^{\kappa}(x){\partial z^m(x) \over \partial (q_j^{\kappa-I})^{(l)} }\Big({\partial z^n(x) \over \partial (q_i^{I})^{(h)} }\Big)^{h+l-1}
\end{eqnarray}
Obviously,  We get  $ g^{ij}_1(z)$ from  the expression
\begin{equation}\label{formula for dispesionless limit}\mathbb{A}(z^m,z^n) = \partial_{q_{r-1}^{\kappa}} \mathbb{B} =\sum_{i,I,J}\sum_{h,l}(-1)^h (l+h) {\Delta_I^{ij}\over \Xi_I^i}{\partial z^m(x) \over \partial (q_j^{\kappa-I})^{(l)} }\Big({\partial z^n(x) \over \partial (q_i^{I})^{(h)} }\Big)^{h+l-1}.
\end{equation}

 \begin{lem}  \label{homg of g1}  The matrix $\mathbb{A}(z^m,z^n)$ is lower antidiagonal with respect to $Wt(e)$ and the antidiagonal entries are constants. In other word  $\mathbb{A}(z^m,z^n)$ is constant  if $\eta_m+\eta_n\leq \kappa+1$
 and equals zero if $\eta_m+\eta_n<\kappa+1$.
 \end{lem}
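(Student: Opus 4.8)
The plan is to derive the claim directly from the quasihomogeneity recorded in Lemma \ref{lin inv poly} by a degree count on the explicit formula \eqref{formula for dispesionless limit} for $\mathbb{A}(z^m,z^n)$. Recall that we grade differential polynomials in the coordinates of $q(x)\in\lop\bneg$ by assigning $\partial_x^l(q_i^J(x))$ the degree $2J+2l+2$. In this grading every such variable has strictly positive degree, one application of $\partial_x$ raises the degree by $2$, taking a partial derivative with respect to a graded variable lowers the degree by the degree of that variable, and, by Lemma \ref{lin inv poly}, each invariant $z^i(x)$ is homogeneous of degree $2\eta_i+2$. The quantities $\Delta_I^{ij}$ (defined by $[a,X_I^i]=\sum_j\Delta_I^{ij}X_{I-\kappa}^j$), the numbers $\Xi_I^i=\bil{X_I^i}{X_I^i}$, and the numerical coefficients $(-1)^h(l+h)$ occurring in \eqref{formula for dispesionless limit} are scalars, hence of degree $0$.

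First I would compute the degree of a generic nonzero summand of \eqref{formula for dispesionless limit}; note that summands with $h=l=0$ drop out because of the factor $(l+h)$, so we may assume $h+l\ge 1$. Differentiating $z^m$ with respect to the variable $(q_j^{\kappa-I})^{(l)}$ of degree $2(\kappa-I)+2l+2$ yields a homogeneous differential polynomial $\partial z^m/\partial(q_j^{\kappa-I})^{(l)}$ of degree $2\eta_m+2-\big(2(\kappa-I)+2l+2\big)=2\eta_m-2\kappa+2I-2l$. Similarly $\partial z^n/\partial(q_i^{I})^{(h)}$ is homogeneous of degree $2\eta_n-2I-2h$, and applying $\partial_x^{\,h+l-1}$ raises this to $2\eta_n-2I-2h+2(h+l-1)=2\eta_n-2I+2l-2$. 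Multiplying the two factors, the total degree of the summand is
\[
(2\eta_m-2\kappa+2I-2l)+(2\eta_n-2I+2l-2)=2(\eta_m+\eta_n-\kappa-1),
\]
which is independent of the summation indices $i,I,j,h,l$. Hence $\mathbb{A}(z^m,z^n)$ is a homogeneous differential polynomial of degree $2(\eta_m+\eta_n-\kappa-1)$.

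The final step is to observe that, since every monomial in the grading variables has degree at least $2$, a nonzero homogeneous differential polynomial has nonnegative degree, and if its degree is $0$ it reduces to a scalar. Therefore $\mathbb{A}(z^m,z^n)=0$ when $\eta_m+\eta_n<\kappa+1$, and $\mathbb{A}(z^m,z^n)$ is a constant when $\eta_m+\eta_n=\kappa+1$; equivalently $\mathbb{A}(z^m,z^n)$ is constant whenever $\eta_m+\eta_n\le\kappa+1$ and vanishes whenever $\eta_m+\eta_n<\kappa+1$, which is precisely the assertion that $\mathbb{A}(z^m,z^n)$ is lower antidiagonal with respect to $Wt(e)$ with constant antidiagonal entries. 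I expect the only delicate point to be the bookkeeping of degrees through the Leibniz rule and the derivative conventions of \eqref{formula for dispesionless limit}; the decisive fact — that the dependence on the summation indices cancels in the degree — is forced by Lemma \ref{lin inv poly}, so there is no substantial further obstacle. In particular one need not track separately whether a summand still involves $q_{r-1}^\kappa$, since that variable too has strictly positive degree and the same dichotomy between negative and zero degree applies.
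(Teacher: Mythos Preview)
Your proof is correct and follows exactly the same approach as the paper: a degree count using the quasihomogeneity of the $z^i(x)$ from Lemma~\ref{lin inv poly} to show that $\mathbb{A}(z^m,z^n)$ is homogeneous of degree $2\eta_m+2\eta_n-2\kappa-2$, whence the lower-antidiagonal structure. The paper's own proof is a single line stating this degree, whereas you have carefully unpacked the bookkeeping through each factor of \eqref{formula for dispesionless limit}; the substance is the same.
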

\begin{proof}
We note that if  $z^m(x)$ and $z^n(x)$ are  quasihomogenous of  degree $2 \eta_m+2$ and $2\eta_n+2$, respectively, then $\mathbb{A}(z^m,z^n)$ will be quasihomogenous of degree \[ 2\eta_{m}+2+2\eta_n+2-(2\kappa+2)-4=2 \eta_m+2\eta_n-2\kappa-2.\]
The proof is complete.
\end{proof}

\begin{prop}\label{nondeg}
The minor matrix $g^{mn}_1,~m,n=1,\ldots,r$ is nondegenerate and its determinant is equal to the determinant of the matrix ${1\over \rho} A_{mn}$.
\end{prop}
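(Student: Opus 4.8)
The plan is to identify the matrix $\mathbb{A}(z^m,z^n)$ computed from the loop-algebra Leibniz rule with the finite-dimensional expression appearing in Proposition~\ref{Gold}, and then invoke the nondegeneracy of the Cartan matrix $A_{ij}$. By Lemma~\ref{homg of g1} we already know that $\mathbb{A}(z^m,z^n)$ is constant whenever $\eta_m+\eta_n\le\kappa+1$ and vanishes when $\eta_m+\eta_n<\kappa+1$, so the only entries to be pinned down are the ones on the antidiagonal $\eta_m+\eta_n=\kappa+1$ with respect to $Et(e)$. Since the relevant entries are constants, I would evaluate them by dropping all nonlinear terms in the invariants: by Lemma~\ref{lin inv poly}, $z^i(x)=q_i^{\eta_i}(x)-\partial_x q_i^{\eta_i-1}(x)+\text{nonlinear}$, so in the formula \eqref{formula for dispesionless limit} the only surviving contributions come from the linear pieces $\partial z^m/\partial q_m^{\eta_m}=1$, $\partial z^m/\partial (q_m^{\eta_m-1})' = -1$, and similarly for $z^n$. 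Plugging these into $\mathbb{A}(z^m,z^n)$ collapses the multiple sum to the two terms with $(l,h)=(0,1)$ and $(l,h)=(1,0)$ in the appropriate index ranges, each of which produces a single $\Delta$-coefficient.

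Concretely, the $(l,h)=(0,1)$ term forces $j=m$, $I=\eta_n$, $\kappa-I=\eta_m$ so $I=\kappa-\eta_m=\eta_n$ (consistent exactly on the antidiagonal), and contributes $(-1)^1(0+1)\,\Delta^{nm}_{\eta_n}/\Xi^n_{\eta_n}$; symmetrically the $(l,h)=(1,0)$ term contributes $(-1)^0(1+0)\,\Delta^{mn}_{\eta_m}/\Xi^m_{\eta_m}$, up to the sign from the $-\partial_x q^{\eta-1}$ piece. Recalling $a=X^{r-1}_{-\kappa}$, $\Delta^{ij}_I$ defined by $[a,X_I^i]=\sum_j\Delta_I^{ij}X^j_{I-\kappa}$, and $\Xi^i_I=\bil{X_I^i}{X_I^i}$, I can rewrite $\Delta^{mn}_{\eta_m}/\Xi^m_{\eta_m}$ in the form $\bil{[a,X_{\eta_m}^m]}{X_{\eta_n-1}^n}/\bil{X^m_{\eta_m}}{X^m_{\eta_m}}$ and then use the $sl_2$-normalization \eqref{sl2bilinear} together with the identity $\bil{X^m_{\eta_m-?}}{\cdot}$ relations to convert the denominator $\Xi^m_{\eta_m}$ into $2\eta_m$ times the pairing $\bil{X^m_{\eta_m}}{X^m_{-\eta_m}}$-type normalization, so that the whole expression matches the left-hand side of Proposition~\ref{Gold}. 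That proposition then gives, on the antidiagonal, $\mathbb{A}(z^m,z^n)=\tfrac1\rho A_{mn}$ exactly, and hence $g_1^{mn}=\partial_{z^{r-1}}g^{mn}$ equals $\tfrac1\rho A_{mn}$ on the antidiagonal and $0$ strictly below it, i.e.\ $g_1^{mn}$ is lower antidiagonal with respect to $Et(e)$ with antidiagonal block $\tfrac1\rho A_{mn}$.

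Finally, a lower-antidiagonal matrix (with respect to the ordering $\mu_1\le\cdots\le\mu_r$ of $Et(e)$ from Proposition~\ref{opp Cartan prop}) has determinant equal to the product of its antidiagonal entries up to sign, hence $\det(g_1^{mn})=\pm\det\big(\tfrac1\rho A_{mn}\big)$, and more precisely the matrix $\tfrac1\rho A_{mn}$ is itself exactly antidiagonal, so the two determinants coincide. Since $A_{ij}$ is the matrix of a nondegenerate invariant form on the Cartan subalgebra $\h'$, it is invertible, so $g_1^{mn}$ is nondegenerate. The main obstacle I anticipate is the careful bookkeeping in the second paragraph: tracking the index constraints, the factors of $\eta$ and the signs coming from \eqref{sl2relation} and \eqref{sl2bilinear}, and the sign/ordering subtlety when one reindexes by the sorted exponents $\mu_j$, so that the combinatorial coefficient in \eqref{formula for dispesionless limit} matches the weighted commutator expression in Proposition~\ref{Gold} on the nose rather than up to an unaccounted constant.
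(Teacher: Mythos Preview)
Your approach is essentially the paper's own proof: reduce via Lemma~\ref{homg of g1} to the antidiagonal entries, evaluate those using only the linear part of the invariants from Lemma~\ref{lin inv poly}, collapse the sum \eqref{formula for dispesionless limit} to two terms by index constraints, and identify the result with the expression in Proposition~\ref{Gold}. The paper organizes the combinatorics by first restricting the inner index $I$ to $\{\eta_m-1,\eta_m\}$ via degree bounds and then pinning down $(h,l,i,j)$, whereas you group by $(l,h)$, but the computation is the same.

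One small slip in your bookkeeping (which you anticipated): in the $(l,h)=(0,1)$ term, $h=1$ forces $I=\eta_n-1$, not $I=\eta_n$; together with $\kappa-I=\eta_m$ this gives $\eta_m+\eta_n=\kappa+1$, as desired. With that correction your two surviving terms are exactly the paper's $-\Delta^{mn}_{\eta_m}/\Xi^m_{\eta_m}$ and $\Delta^{mn}_{\eta_m-1}/\Xi^m_{\eta_m-1}$, which Proposition~\ref{Gold} converts to $\tfrac1\rho A_{mn}$.
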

\begin{proof}
We observe that, from our choice of  coordinates, the minor matrix $g^{mn}_1$ will be lower antidiagonal with respect to the set $Et(e)$. Hence, form the second part of proposition \ref{opp Cartan prop} we need only to prove that $\mathbb{A}(z^n,z^m)$ with $\eta_n+\eta_m=\kappa+1$ is nonzero constant. In this case $z^m$ and $z^n$ are quasihomogenous of degree $2\eta_m+2$ and $2\kappa-2\eta_m+4$, respectively.  The expression
 \begin{equation}\mathbb{A}(z^n,z^m) =\sum_{i,I,J}\sum_{h,l}(-1)^h (l+h) {\Delta_I^{ij}\over \Xi_I^i}{\partial z^n(x) \over \partial (q_j^{\kappa-I})^{(l)} }\Big({\partial z^m(x) \over \partial (q_i^{I})^{(h)} }\Big)^{h+l-1}
\end{equation}
gives the constrains
 \begin{eqnarray}
 2I+2\leq 2\eta_m+2 \\\nonumber
 2\kappa-2I+2 \leq 2\kappa-2 \eta_m+4
 \end{eqnarray}
which implies \[ \eta_m-1\leq I\leq \eta_m\]
Therefore the only possible values for the index $I$ in the expression of $\mathbb{A}(z^n,z^m)$ that make sense are $\eta_m$ and $\eta_m-1$. Consider  the partial summation of $ \mathbb{A}(z^n,z^m)$ when  $I=\eta_m$. The degree of $z^m(x)$ yields $h=0$ and that $z^m(x)$ depends linearly on $q_i^{\eta_m}(x)$.  But then equation \eqref{leading terms1} implies that $i$ is fixed and equals to $m$. A similar argument on $z^n(x)$ we find that the indices $l$ and $j$ are fixed and equal to $1$ and $n$, respectively. Hence,  the  partial summation when $I=\eta_m$ gives the value
 \[{\Delta_{\eta_m}^{mn}\over \Xi_{\eta_m}^m}{\partial z^n(x) \over \partial (q_n^{\kappa-\eta_m})^{(1)} }{\partial z^m(x) \over \partial (q_m^{\eta_m})^{(0)}} =-{\Delta_{\eta_m}^{mn}\over \Xi_{\eta_m}^m}.\]
We now turn to  the partial summation of $ \mathbb{A}(z^n,z^m)$ when  $I=\eta_m-1$.  The  possible values for $h$ are $1$ and $0$. When $h=0$  we get zero since $l$ and $h$ can only be zero. When $h=1$ we get, similar to the above calculation,  the value
 \[
(-1)  {\Delta_{\eta_m-1}^{mn}\over \Xi_I^i}{\partial z^n(x) \over \partial (q_n^{\kappa-\eta_m})^{(0)} }{\partial z^m(x) \over \partial (q_m^{\eta_m-1})^{(1)} }={\Delta_{\eta_m-1}^{mn}\over \Xi_{\eta_m-1}^m}.
\]  Hence we end with the expression \begin{eqnarray*}
\mathbb{A}(z^n,z^m)&=& {\Delta_{\eta_m-1}^{mn}\over \Xi_{\eta_m-1}^m}-{\Delta_{\eta_m}^{mn}\over \Xi_{\eta_m}^m}\\\nonumber
&=&{\bil {[a,X_{\eta_n}^n]}{X_{\eta_m-1}^m}\over 2\eta_m}+ {\bil {[a,X_{\eta_m}^m]}{X_{\eta_n-1}^n}\over 2\eta_n}={1\over \rho} A_{mn}
\end{eqnarray*}
where  the last equality was obtained in proposition \ref{Gold}. Hence, the  determinant of minor matrix $g^{mn}_1,~m,n=1,\ldots,r$ equals to the determinant of ${1\over \rho} A_{mn}$ which  is nondegenerate.
\end{proof}

\section{Dirac reduction}

In this section we extract information about $\{.,.\}^{\widetilde{Q}}$   using  the fact that the Poisson bracket $\{.,.\}^{\widetilde{Q}}$ can be obtained by preforming the Dirac reduction on  $\{.,.\}$.

Let $\gdim$ denote the dimension of $\g$. We use proposition \ref{reg:sl2:normalbasis} to fix a total order $\xi_I, ~I=1,\ldots,\gdim$ of the  basis $X^i_I$    such that:
\begin{enumerate}
\item The first  $ r +2$  are the lowest weight vectors in the following order    \beq X^1_{-\eta_1}<X^2_{-\eta_2}<\ldots<X^{r+2}_{-\eta_{r+2}}.\eeq
\item The matrix
\beq
\bil {\xi_I}{\xi_J},~ I,J=1,\ldots,\gdim
\eeq is antidiagonal.
\end{enumerate}
Let $\xi^I$ be  the dual basis of $\xi_I$ under $\bil . .$. We observe  that  if $\xi_I \in \g_{\mu}$ then $\xi^I \in \g_{-\mu}$. Let $c^{IJ}_K$ denotes  the structure constant of $\g$ under the dual basis and we set $\widetilde{g}^{IJ}=\bil{\xi^I}{\xi^J}$.  We extend the coordinates $z^i(x)$ on ${\widetilde{Q}}$ to all $\lop \g$ by setting for $q(x)\in \lop \g$
\begin{equation}
z^I(q(x))=\bil{q(x)-e}{\xi^I},~~ I=1,\ldots, \gdim.
\end{equation}
Then we  consider the  following matrix differential operator
\begin{equation}\label{bih operator}
\mathbb{F}^{IJ}=\epsilon \widetilde{g}^{IJ}\partial_x+ \widetilde{F}^{IJ},
\end{equation}
where, \[\widetilde F^{IJ}=\sum_{K}\big( c^{IJ}_K z^K(x)\big).\]
In this notations  the Poisson bracket   $\{.,.\}$  is given by
\begin{equation}
\{z^{I}(x),z^J(y)\}=\mathbb{F}^{IJ}{1\over \epsilon}\delta(x-y).
\end{equation}

For the rest of this section we   consider  three types of indices which have different ranges; capital letters $I,J,K,...=1,..,\gdim$,
small letters $i,j,k,...=1,....,r+2$  and Greek letters
$\alpha,\beta,\delta,...=r+3,...,\gdim$.

We observe that the matrix $\widetilde{F}^{IJ}$ define  the finite Lie-Poisson structure on $\g$.  It is well known that the  symplectic subspaces of this structure are the  orbit spaces of $\g$ under the adjoint  action and there are  $r$ global Casimirs \cite{MRbook}. Since the Slodowy slice $Q=e+\g^f$ is transversal to the orbit of $e$, the minor matrix $\widetilde{F}^{\alpha \beta}$ is nondegenerate. Let $\widetilde{F}_{\alpha \beta}$ denote its inverse.

\begin{prop} (\cite{mypaper2},\cite{BalFeh1})\label{dirac red}
The  Poisson bracket $\{.,.\}^{\widetilde{Q}}$ can be obtained by performing the Dirac reduction  on $\{.,.\}$ on ${\widetilde{Q}}$.
\end{prop}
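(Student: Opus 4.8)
The plan is to recognise $\widetilde Q$ as a coordinate submanifold of $\lop\g$ on which the two constructions of a reduced bracket — Marsden--Ratiu reduction of $\{\,,\,\}$ by the $\gauge$-action (which yields $\{\,,\,\}^{\widetilde Q}$) and Dirac reduction of $\{\,,\,\}$ (which, via Proposition \ref{dirac fromula}, yields a bracket I will call $\{\,,\,\}^{\widetilde Q}_D$) — must agree. Since $\g^f$ is spanned by the lowest weight vectors $X^i_{-\eta_i}$, $i=1,\dots,r+2$, which occupy the first $r+2$ places of the total order $\xi_I$, the slice $\widetilde Q=e+\lop{\g^f}$ is exactly $\{z^\alpha=0:\alpha=r+3,\dots,\gdim\}\subset\lop\g$, and $\widetilde F^{\alpha\beta}$ is nondegenerate (the transversality of $e+\g^f$ to the adjoint orbit of $e$ recorded above), so Proposition \ref{dirac fromula} does produce $\{\,,\,\}^{\widetilde Q}_D$. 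The Greek coordinates split in two: let $z^a$ be those with $\xi^a\in\nneg$ (equivalently $\xi_a$ of Dynkin weight $\ge2$); then $S=J^{-1}(e)=\lop\bneg+e=\{z^a=0\}$ contains $\widetilde Q$, while the remaining Greek coordinates $z^\mu$ cut $\widetilde Q$ out of $S$. Two facts will be used repeatedly. First, the $z^a$ are first class, i.e.\ $\{z^a(x),z^b(y)\}$ vanishes on $S$: the $\delta'$-part $\bil{\xi^a}{\xi^b}$ is zero because $\xi^a,\xi^b\in\nneg=\bigoplus_{i\le-2}\g_i$ cannot be paired, and $\widetilde F^{ab}=\sum_K c^{ab}_K z^K$ is supported on $\xi^K\in[\nneg,\nneg]\subseteq\bigoplus_{i\le-4}\g_i$, for which $z^K|_S=0$ since $q-e\in\bneg$ there and $\bil{\bneg}{\bigoplus_{i\le-4}\g_i}=0$. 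Second, by Proposition \ref{DS as momentum } the $z^a$ are the components of the momentum map $J$ and generate the $\gauge$-action, so any $\gauge$-invariant functional $\hat f$ has $\{z^a,\hat f\}=0$ in a neighbourhood of $S$.

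Next I would write both reduced brackets as restrictions of $\{\,,\,\}$ computed with suitable extensions. For $f,g$ on $\widetilde Q$ the Marsden--Ratiu bracket is $\{f,g\}^{\widetilde Q}=\{\hat f,\hat g\}|_{\widetilde Q}$ for $\gauge$-invariant extensions $\hat f,\hat g$, while the Dirac bracket is $\{f,g\}^{\widetilde Q}_D=\{\tilde f,\hat g\}|_{\widetilde Q}$ for any extension $\tilde f$ whose Hamiltonian vector field is tangent to $\widetilde Q$ along $\widetilde Q$, i.e.\ $\{z^\alpha,\tilde f\}|_{\widetilde Q}=0$ for all Greek $\alpha$. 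The key claim is that $\tilde f$ and $\hat f$ may be chosen so that their difference is, along $\widetilde Q$, built only from the first-class constraints $z^a$. Indeed $\tilde f-\hat f$ vanishes on $\widetilde Q$, so $\tilde f-\hat f=\sum_\alpha z^\alpha h_\alpha$; imposing $\{z^a,\tilde f-\hat f\}|_{\widetilde Q}=-\{z^a,\hat f\}|_{\widetilde Q}=0$ and using $\{z^a,z^b\}|_{\widetilde Q}=0$ together with the invertibility of the block $\widetilde F^{a\nu}|_{\widetilde Q}$ (forced by the nondegeneracy of $\widetilde F^{\alpha\beta}$ and the vanishing of its $(ab)$-block on $S$) gives $h_\nu|_{\widetilde Q}=0$ for every gauge-fixing index $\nu$. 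Hence $\tilde f-\hat f=\sum_a z^a h_a+(\text{terms quadratic in the }z^\alpha)$, and since the $z^a$ bracket to zero against $\hat g$ on $\widetilde Q$ and the quadratic terms do so because $z^\alpha|_{\widetilde Q}=0$, we conclude $\{\tilde f-\hat f,\hat g\}|_{\widetilde Q}=0$.

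Putting this together yields $\{f,g\}^{\widetilde Q}_D=\{\tilde f,\hat g\}|_{\widetilde Q}=\{\hat f,\hat g\}|_{\widetilde Q}=\{f,g\}^{\widetilde Q}$, which is the assertion; the identification that makes the two sides literally the same functions on $\widetilde Q$ — that the generators of the ring $R$ obtained from the gauge fixing \eqref{gauge fix} are precisely the ambient coordinates $z^i$ restricted to $\widetilde Q$ — is the cross-section statement underlying Proposition \ref{DS as momentum }.

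I expect the main obstacle to be making the middle paragraph rigorous at the level of the full local Poisson bracket rather than its leading Lie--Poisson part: the Dirac correction then picks up the higher-order-in-$\epsilon$ terms governed by the recursive equations of Proposition \ref{dirac fromula}, which involve $x$-derivatives of the $\widetilde F_{\alpha\beta}$, and one has to track every one of them through the splitting into first-class and gauge-fixing indices and check — using the two facts above, the grading inclusion $[\nneg,\nneg]\subseteq\bigoplus_{i\le-4}\g_i$, and the block inversion — that each restricts to zero on $\widetilde Q$. This is exactly the computation carried out in \cite{mypaper2} and \cite{BalFeh1}, which is why the proposition is attributed to them; everything else is grading bookkeeping.
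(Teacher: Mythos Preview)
The paper does not actually prove this proposition: it is stated with attributions to \cite{mypaper2} and \cite{BalFeh1} and then used without further argument, the formulas \eqref{finite Poiss brac}--\eqref{finite Poiss brac3} being read off directly from Proposition~\ref{dirac fromula}. So there is nothing in the paper to compare your argument against; you have supplied a proof sketch where the paper supplies only a citation.

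That said, your sketch is essentially the standard argument one finds in the cited references: split the second-class constraints $z^\alpha$ into the first-class momentum-map components $z^a$ (with $\xi^a\in\nneg$) and the gauge-fixing remainder $z^\nu$, observe that $\{z^a,z^b\}|_S=0$ by the grading, and then show that the Dirac correction terms collapse because $\gauge$-invariant extensions already Poisson-commute with the $z^a$. The two ``facts'' you isolate are exactly right and are the heart of the matter. One small slip: in the line ``imposing $\{z^a,\tilde f-\hat f\}|_{\widetilde Q}=-\{z^a,\hat f\}|_{\widetilde Q}=0$'' you presumably mean that \emph{both} $\{z^a,\tilde f\}$ and $\{z^a,\hat f\}$ vanish on $\widetilde Q$ (the first by the Dirac condition on $\tilde f$, the second by invariance of $\hat f$), hence so does their difference. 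You are also honest about the real work: the ``block $\widetilde F^{a\nu}$'' is in truth a matrix differential operator $\widetilde g^{a\nu}\partial_x+\widetilde F^{a\nu}$, and making the inversion and the recursive cancellations precise is exactly what \cite{mypaper2} and \cite{BalFeh1} do. Your outline is correct as far as it goes and matches the literature the paper defers to.
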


By proposition  \ref{dirac fromula},   the leading terms of $\{.,.\}^{\widetilde{Q}}$ are given by
\begin{equation}\label{finite Poiss brac}
F^{ij}(z(x))=(\widetilde{F}^{ij}-\widetilde{F}^{i\beta} \widetilde{F}_{\beta\alpha} \widetilde{F}^{\alpha j}
)\end{equation}
\begin{equation}\label{finite Poiss brac2}
g^{ij}(z(x))= \widetilde{g}^{ij}-\widetilde{g}^{i\beta}\widetilde{ F}_{\beta\alpha}\widetilde{F}^{\alpha
j}+\widetilde{F}^{i \beta}\widetilde{F}_{\beta \alpha} \widetilde{g}^{\alpha \varphi} \widetilde{F}_{\varphi
\gamma} \widetilde{F}^{\gamma j}-\widetilde{F}^{i\beta} \widetilde{F}_{\beta \alpha} \widetilde{g}^{\alpha j}.
\end{equation}
and
\begin{equation}\label{finite Poiss brac3}
{\Gamma}^{ij}_{k}(z(x)) z_x^k=-\big(\wdt g^{i\beta} - \wdt F^{i\lambda} \wdt F_{\lambda \alpha} \wdt g^{\alpha \beta} \big)\partial_x(\wdt F_{\beta \varphi} \wdt F^{\varphi j})
\end{equation}

A consequence of this proposition is the following
\begin{prop} \cite{BalFeh1}\label{walg}The Poisson bracket $\{.,.\}^{\widetilde{Q}}$ has the following form
\begin{eqnarray}\label{leading terms}
\{z^1(x),z^1(y)\}^{\widetilde{Q}}&=& \eps \delta^{'''}(x-y) +2 z^1(x) \delta'(x-y)+ z^1_x\delta(x-y), \\\nonumber
\{z^1(x),z^i(y)\}^{\widetilde{Q}} &=& (\eta_i+1) z^i(x) \delta'(x-y)+ \eta_i z^i_x \delta(x-y),
\end{eqnarray}
$i=1,...,r+2$.
\end{prop}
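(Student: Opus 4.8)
The plan is to compute the Dirac-reduction formulas \eqref{finite Poiss brac}--\eqref{finite Poiss brac3} in the special case $i=1$, using that $z^1(x)$ is the Virasoro density, and to package the result into the stated two families of brackets. The first key observation is that the weight-$1$ module $V^1$ is a copy of $\A$ itself, with normalized basis $X_1^1=-e$, $X_0^1=h$, $X_{-1}^1=f$; consequently the extended coordinate $z^1$ equals (up to the $\bil e f=1$ normalization) the component of $q(x)-e$ along $f$, i.e. the Hamiltonian generating the infinitesimal gauge action of $\lop\nneg$ combined with the shift by $e$. More precisely, I would invoke Proposition~\ref{DS as momentum } and the fact (already recorded in Lemma~\ref{lin inv poly} and the computation preceding it) that
\[
z^1(x)= q_1^1(x)-\partial_x q_1^0(x)+\tfrac12\sum_i (q_i^0(x))^2\bil{X_0^i}{X_0^i}.
\]
The point is that $z^1(x)$ is, after the reduction, the density of the Hamiltonian that generates the conformal (Virasoro) action under which each $z^i$ is a primary field of conformal weight $\eta_i+1$. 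This is exactly the content of the Dynkin $\Z$-grading: $h$ acts on $X^i_{-\eta_i}$ by $-2\eta_i$, so the scaling weight of the coordinate $z^i$ (recall we assigned $z^i$ degree $2\eta_i+2$) matches, and the constant central term $\eps\,\delta'''(x-y)$ comes from the $\bil e f=1$ normalization of the cocycle in \eqref{bih:stru on g}.

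The concrete route I would actually follow: specialize Proposition~\ref{dirac fromula}. Since $X^1_{1}=-e,X^1_0=h,X^1_{-1}=f$ all lie in $V^1$, the dual basis elements $\xi^I$ pairing with them lie in $\g_{-2},\g_0,\g_2$, and one checks that none of the ``$\alpha$'' directions (the coordinates transverse to $Q$) couple to $z^1$ at the level that matters: concretely the mixed terms $\wdt F^{1\beta}\wdt F_{\beta\alpha}\wdt F^{\alpha j}$ and the analogous $\wdt g$ terms vanish because $z^1$ already descends to $\widetilde Q$ without correction — this is where Corollary~\ref{cor dirac} does the work, applied to the pair consisting of $z^1$ and any $z^j$. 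Hence $F^{1j}=\wdt F^{1j}=\sum_K c^{1j}_K z^K$ and $g^{1j}=\wdt g^{1j}$ and $\Gamma^{1j}_k z^k_x$ is read directly off \eqref{finite Poiss brac3}. Then $\wdt g^{11}$ is the coefficient $\bil{\xi^1}{\xi^1}$, which after the $sl_2$-normalization \eqref{sl2bilinear} for $V^1$ produces exactly the factor making $\{z^1(x),z^1(y)\}^{\widetilde Q}$ carry $\eps\,\delta'''(x-y)$; and $c^{11}_K z^K$ reproduces $2z^1\delta'+z^1_x\delta$ by the structure constants $[f,h]=\cdots$ inside $\A$. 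For $\{z^1,z^i\}$, the commutator bracket $c^{1i}_K z^K$ together with the derivative term encode the statement ``$z^i$ is primary of weight $\eta_i+1$''; the coefficients $\eta_i+1$ of $\delta'$ and $\eta_i$ of $\delta$ are forced by quasihomogeneity (Lemma~\ref{lin inv poly}) plus the single nontrivial relation $\ad h\,X^i_{-\eta_i}=-2\eta_i X^i_{-\eta_i}$, so no term of the wrong scaling degree can appear, and the normalization of the leading linear term $z^i=q_i^{\eta_i}-\partial_x q_i^{\eta_i-1}+\cdots$ fixes the two remaining constants.

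The main obstacle I anticipate is bookkeeping rather than conceptual: one must verify that the cross-terms in the Dirac formula \eqref{finite Poiss brac2}--\eqref{finite Poiss brac3} involving the transverse indices $\beta,\alpha$ genuinely drop out when one of the free indices is $1$, i.e. that $z^1$ restricted to $\s$ is already $\gauge$-invariant to the order needed. This amounts to checking $\wdt F^{1\beta}=0$ (equivalently $c^{1\beta}_K z^K$ vanishes on $\widetilde Q$, which follows since $[f,\g^f]\subset \g^f$ by $\mathfrak{sl}_2$-representation theory — the lowest-weight vectors are annihilated or lowered correctly) and, for the $\delta'$-coefficient, that the quadratic term $\wdt F^{1\beta}\wdt F_{\beta\alpha}\wdt g^{\alpha\varphi}\wdt F_{\varphi\gamma}\wdt F^{\gamma 1}$ is likewise absent. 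Once that vanishing is in place, the rest is a direct substitution, and the result is the classical Virasoro–primary form stated in \eqref{leading terms}; alternatively, one may simply cite that this is the well-known universal shape of a classical $W$-algebra with Virasoro element $z^1$, which is the reference to \cite{BalFeh1} attached to the proposition.
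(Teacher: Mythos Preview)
The paper does not actually prove this proposition: it is attributed to \cite{BalFeh1}, and the only comment following it is that equations \eqref{leading terms} are precisely the defining identities of a classical $W$-algebra with Virasoro density $z^1$. So your proposal is not being compared against a proof in the paper but against a citation; your final sentence (``alternatively, one may simply cite \ldots'') is in fact exactly what the paper does.

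That said, your attempted direct verification via the Dirac formulas has a genuine gap. You claim that the cross terms in \eqref{finite Poiss brac}--\eqref{finite Poiss brac3} drop out for the row $i=1$ because $\wdt F^{1\beta}=0$ on $\widetilde Q$, and you justify this by ``$[f,\g^f]\subset\g^f$''. This is the wrong commutator: the dual element $\xi^1$ pairing with $\xi_1=X^1_{-1}=f$ is (up to sign) the \emph{highest} weight vector $e$, not $f$, so the structure constants $c^{1\beta}_K$ encode $\ad e$, not $\ad f$. And $\wdt F^{1\beta}$ does not vanish on $\widetilde Q$ in general: restricting to $q=e+\sum_k z^k X^k_{-\eta_k}$ and using invariance gives a contribution proportional to $\bil{[e,X^k_{-\eta_k}]}{\xi^\beta}=\bil{X^k_{-\eta_k+1}}{\xi^\beta}$, which is nonzero precisely when $\xi^\beta$ is (dual to) the second-lowest weight vector in some $V^k$. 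So Corollary~\ref{cor dirac} cannot be invoked here, even row-by-row, and the reduction terms $\wdt F^{1\beta}\wdt F_{\beta\alpha}\wdt F^{\alpha j}$ etc.\ genuinely contribute. The standard argument in \cite{BalFeh1} instead goes through the gauge-fixing (Leibniz rule) description of the reduced bracket --- the route recorded just before Lemma~\ref{lin inv poly} --- where the Virasoro property of $z^1$ is read off from its role as the density generating the conformal transformations under which each $z^i$ has weight $\eta_i+1$; your quasihomogeneity remark is the shadow of that computation, but by itself it only constrains the form of the answer and does not establish the precise coefficients without the underlying gauge argument.
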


Indeed, equations \eqref{leading terms} are exactly the identities which define Virasoro density and classical $W$-algebras
\cite{fehercomp}.
\subsection{The quasihomogeneity condition}
we want to study the quasihomogeneity  of the entries $g^{ij}_2(z)$.  We use the definition of the coordinates $z^I(x)$ and we assign degree $\mu_I+2$ to $\q^I(x)$ if ${\xi^I}\in \g_{\mu_I}$. These degrees agree with those given in corollary \ref{lin inv poly}. From the total order of the basis, it follows  that if $z^I(x)$ has  degree $\mu_I+2$ then degree  $\q^{\gdim-I+1}(x)$ equals  $-\mu_I+2$. Further, since $[\g_{\mu_I},\g_{\mu_J}]\subset \g_{\mu_I+\mu_J}$, an entry  $\widetilde F^{IJ}(x)$ is quasihomogenous of degree  $\mu_I+\mu_J+2$.
\begin{defn}
in considering the degrees of the coordinates $z^I$, We say a matrix $B^{IJ}(z)$ with polynomial entries is quasihomogenous of degree $n$ if each entry  $B^{IJ}(z)$ is quasihomogenous of degree $\mu_I+\mu_J+n$.
\end{defn}

\begin{prop}\cite{DamSab}\label{mat poly}
The matrix $\widetilde{F}_{\beta \alpha}(z) $  restricted to ${\widetilde{Q}}$ is polynomial and quasihomogenous of degree $-2$.
\end{prop}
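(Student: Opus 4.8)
The plan is to exploit the two structural facts that, together, pin down the claim: first, that $\widetilde F^{\alpha\beta}$ restricted to $\widetilde Q$ has polynomial entries, and second, that its inverse $\widetilde F_{\beta\alpha}$ is quasihomogeneous of a definite degree. I would begin by recalling how the restriction of the coordinates $z^I(x)$ to $\widetilde Q$ is computed via the gauge-fixing equation \eqref{gauge fix}: each $z^I$, $I$ running over the complement indices $\alpha=r+3,\dots,\gdim$, is expressed as a differential polynomial in the Slodowy-slice coordinates $z^1,\dots,z^{r+2}$ and their $x$-derivatives, with no $\epsilon$ appearing at the relevant order because we are looking at the dispersionless piece. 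Since $\widetilde F^{IJ}=\sum_K c^{IJ}_K z^K(x)$ is already linear in the $z^K$, substituting the polynomial expressions for the $z^\alpha$ shows that each entry $\widetilde F^{\alpha\beta}|_{\widetilde Q}$ is a polynomial in the coordinates $t^1,\dots,t^{r+2}$ on $Q$; this is the polynomiality half.

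Next I would verify the degree count. By the grading convention set up just before the statement, $z^I$ has degree $\mu_I+2$ when $\xi^I\in\g_{\mu_I}$, and an entry $\widetilde F^{IJ}$ is quasihomogeneous of degree $\mu_I+\mu_J+2$; in the convention of the definition preceding the proposition this means $\widetilde F^{IJ}$ is quasihomogeneous of degree $+2$ as a matrix. The restriction to $\widetilde Q$ does not spoil this, because the gauge-fixing substitution is itself degree-preserving (the $z^\alpha$ acquire exactly the degrees $\mu_\alpha+2$ dictated by the grading). Now $\widetilde F_{\beta\alpha}$ is the inverse of the submatrix $\widetilde F^{\alpha\beta}$. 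If a nondegenerate matrix $B^{\alpha\beta}$ is quasihomogeneous of degree $n$ in the sense that $B^{\alpha\beta}$ has degree $\mu_\alpha+\mu_\beta+n$, then its inverse $B_{\alpha\beta}$ satisfies $B_{\alpha\beta}$ has degree $-\mu_\alpha-\mu_\beta-n$, i.e. it is quasihomogeneous of degree $-n$. Taking $n=2$ gives degree $-2$ for $\widetilde F_{\beta\alpha}$, which is exactly the assertion. I would check this inversion-of-degree claim by differentiating the identity $B^{\alpha\gamma}B_{\gamma\beta}=\delta^\alpha_\beta$ under the Euler vector field $E=\sum d_i t^i\partial_{t^i}$, or simply by a direct weight-balancing argument on Cramer's rule: each cofactor of $B^{\alpha\beta}$ and $\det B$ carry the degrees forced by homogeneity, and the ratio lands in degree $-\mu_\alpha-\mu_\beta-n$.

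The one genuinely nontrivial point — and the step I expect to be the main obstacle — is polynomiality of the \emph{inverse} $\widetilde F_{\beta\alpha}$, as opposed to merely of $\widetilde F^{\alpha\beta}$ itself. A priori the inverse of a polynomial matrix is only rational, with denominator $\det\widetilde F^{\alpha\beta}$. Here one must use that $\widetilde Q$ is transversal to the adjoint orbit of $e$, so that along $\widetilde Q$ the minor $\widetilde F^{\alpha\beta}$ stays nondegenerate; but nondegeneracy alone does not force the inverse to be polynomial. The resolution, which is the real content borrowed from \cite{DamSab}, is that $\det\widetilde F^{\alpha\beta}|_{\widetilde Q}$ is in fact a nonzero \emph{constant}. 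This follows from the degree bookkeeping: $\det\widetilde F^{\alpha\beta}$ is quasihomogeneous of degree $\sum_\alpha(2\mu_\alpha) + 2(\gdim-r-2)$ in one normalization, but one checks that the symmetry of the antidiagonal bilinear-form structure on the $\xi_I$ — specifically that $\mu_\alpha$ and $\mu_{\gdim-\alpha+1}$ are negatives of each other and the pairing is antidiagonal — forces the $\mu$-contributions to cancel in pairs, leaving the determinant of degree zero, hence constant, hence a nonzero scalar. Once $\det\widetilde F^{\alpha\beta}$ is a nonzero constant on $\widetilde Q$, Cramer's rule expresses $\widetilde F_{\beta\alpha}$ as a polynomial (a cofactor divided by a constant), and the degree $-2$ statement then follows from the weight count described above. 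So the skeleton is: (i) polynomiality of $\widetilde F^{\alpha\beta}|_{\widetilde Q}$ from gauge-fixing, (ii) constancy of its determinant from the antidiagonal grading symmetry, (iii) polynomiality of the inverse by Cramer, (iv) degree $-2$ by homogeneity of Cramer's formula.
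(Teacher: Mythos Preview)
The paper does not supply its own proof of this proposition; it is quoted from \cite{DamSab} without argument. So there is no in-paper proof to compare against, and I assess your proposal on its own merits.

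Your overall skeleton (i)--(iv) is sound and would prove the statement, but two steps need correction.

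\textbf{Step (i).} You invoke the gauge-fixing relation \eqref{gauge fix} to express $z^\alpha$ as differential polynomials in $z^1,\dots,z^{r+2}$. That is the Drinfeld--Sokolov picture of Section~4, not the Dirac-reduction setup of Section~5 where this proposition lives. In Section~5 the $z^I$ are \emph{linear} coordinates on $\lop\g$, and restricting to $\widetilde Q$ simply sets $z^\alpha=0$ for $\alpha>r+2$ (because $\langle\xi_j\mid\xi^\alpha\rangle=\delta_j^\alpha=0$). Hence $\widetilde F^{\alpha\beta}|_{\widetilde Q}=\sum_{k\le r+2} c^{\alpha\beta}_k z^k$ (plus the constant contribution from the shift by $e$) is affine-linear in the Slodowy coordinates; no gauge-fixing is needed, and polynomiality is immediate.

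\textbf{Step (ii).} Your claim that the $\mu_\alpha$ ``cancel in pairs'' within the Greek range $\{r+3,\dots,\gdim\}$ is not correct as stated: the involution $\alpha\mapsto\gdim-\alpha+1$ maps the last $r+2$ indices $\{\gdim-r-1,\dots,\gdim\}$ onto $\{1,\dots,r+2\}$, which is \emph{outside} the Greek range. What one actually has is $\sum_{I=1}^{\gdim}\mu_I=0$ (this does follow from the antidiagonal pairing), whence
\[
\sum_{\alpha=r+3}^{\gdim}\mu_\alpha \;=\; -\sum_{i=1}^{r+2}\mu_i \;=\; -2\sum_{i=1}^{r+2}\eta_i.
\]
Combining this with the dimension identity $\gdim=\sum_{i=1}^{r+2}(2\eta_i+1)$, i.e.\ $\gdim-r-2=2\sum_i\eta_i$, the degree of $\det\widetilde F^{\alpha\beta}$ becomes
\[
2\sum_\alpha\mu_\alpha+2(\gdim-r-2)\;=\;-4\sum_i\eta_i+4\sum_i\eta_i\;=\;0.
\]
Since all Slodowy coordinates have strictly positive degree, a quasihomogeneous polynomial of degree~$0$ in them is constant; transversality of $Q$ to the orbit of $e$ gives nondegeneracy at $e$, so the constant is nonzero. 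With this repair, your steps (iii) and (iv) go through verbatim: Cramer's rule yields polynomial entries for $\widetilde F_{\beta\alpha}$, and the cofactor-over-constant degree count gives entry-degree $-\mu_\alpha-\mu_\beta-2$, i.e.\ degree $-2$ in the convention used in the subsequent proof of Proposition~\ref{homg of g2}.
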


\begin{prop}\label{homg of g2}
The matrix $g^{ij}(z)$ is quasihomogenous of degree $-4$ while the matrix  $F^{ij}(z)$ is quasihomogenous of degree $-2$ and  the matrix  ${\Gamma}^{ij}_{k}(z)$ is quasihomogenous of degree $ -(2\eta_k+2)-4$.
\end{prop}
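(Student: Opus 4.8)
The plan is to read the quasihomogeneity degrees straight off the Dirac reduction formulas \eqref{finite Poiss brac}, \eqref{finite Poiss brac2} and \eqref{finite Poiss brac3}, which express $F^{ij}$, $g^{ij}$ and $\Gamma^{ij}_k$ as sums of products of the three matrices $\widetilde F^{IJ}$, $\widetilde g^{IJ}$ and $\widetilde F_{\alpha\beta}$; so the first step is to record the degree of each of these. We already noted that $\widetilde F^{IJ}=\sum_K c^{IJ}_K z^K$ is quasihomogenous of degree $\mu_I+\mu_J+2$, since $[\g_{\mu_I},\g_{\mu_J}]\subseteq\g_{\mu_I+\mu_J}$ forces $c^{IJ}_K=0$ unless $\mu_K=\mu_I+\mu_J$, in which case $\deg z^K=\mu_K+2=\mu_I+\mu_J+2$. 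The matrix $\widetilde g^{IJ}=\bil{\xi^I}{\xi^J}$ is constant, and because $\bil . .$ pairs $\g_a$ with $\g_{-a}$ while $\xi^I\in\g_{\mu_I}$, the entry $\widetilde g^{IJ}$ vanishes unless $\mu_I+\mu_J=0$; thus $\widetilde g^{IJ}$ is quasihomogenous of degree $0$. Finally, by Proposition \ref{mat poly} the transverse inverse block $\widetilde F_{\alpha\beta}$, restricted to $\widetilde Q$, is polynomial and quasihomogenous of degree $-2$; dually to $\widetilde F^{\alpha\beta}$ this means each entry $\widetilde F_{\alpha\beta}$ has polynomial degree $-\mu_\alpha-\mu_\beta-2$ (as is forced anyway by $\widetilde F^{\alpha\gamma}\widetilde F_{\gamma\beta}=\delta^\alpha_\beta$).

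The second step is pure bookkeeping in the $\Z$-grading. In \eqref{finite Poiss brac}, the term $\widetilde F^{ij}$ has degree $\mu_i+\mu_j+2$, and in $\widetilde F^{i\beta}\widetilde F_{\beta\alpha}\widetilde F^{\alpha j}$ the degrees add to $(\mu_i+\mu_\beta+2)+(-\mu_\beta-\mu_\alpha-2)+(\mu_\alpha+\mu_j+2)=\mu_i+\mu_j+2$, the weights of the contracted indices $\alpha,\beta$ cancelling in pairs; hence $F^{ij}$ has degree $\mu_i+\mu_j+2=\deg z^i+\deg z^j-2$, which is the asserted degree $-2$. For $g^{ij}$ one first observes that the leading term $\widetilde g^{ij}$ of \eqref{finite Poiss brac2} vanishes identically, since $\mu_i=2\eta_i>0$ gives $\mu_i+\mu_j>0$; in each of the remaining three products the same cancellation occurs — for instance $\widetilde g^{i\beta}\neq0$ forces $\mu_\beta=-\mu_i$, and then $\widetilde F_{\beta\alpha}\widetilde F^{\alpha j}$ contributes $(-\mu_\beta-\mu_\alpha-2)+(\mu_\alpha+\mu_j+2)=\mu_i+\mu_j$ — so every surviving term has total degree $\mu_i+\mu_j=\deg z^i+\deg z^j-4$, giving the asserted degree $-4$.

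For $\Gamma^{ij}_k$ we use \eqref{finite Poiss brac3}. The factor $\bigl(\widetilde g^{i\beta}-\widetilde F^{i\lambda}\widetilde F_{\lambda\alpha}\widetilde g^{\alpha\beta}\bigr)$ is quasihomogenous of degree $0$ and, thanks to the $\widetilde g$'s, is nonzero only when $\mu_\beta=-\mu_i$; then $\widetilde F_{\beta\varphi}\widetilde F^{\varphi j}$ has degree $(-\mu_\beta-\mu_\varphi-2)+(\mu_\varphi+\mu_j+2)=-\mu_\beta+\mu_j=\mu_i+\mu_j$. Writing $\partial_x\bigl(\widetilde F_{\beta\varphi}\widetilde F^{\varphi j}\bigr)=\sum_k \partial_{z^k}\bigl(\widetilde F_{\beta\varphi}\widetilde F^{\varphi j}\bigr)\,z^k_x$ and using that $\partial_{z^k}$ lowers polynomial degree by $\deg z^k=\mu_k+2=2\eta_k+2$, the coefficient of $z^k_x$, namely $\Gamma^{ij}_k$, has degree $\mu_i+\mu_j-(2\eta_k+2)=\deg z^i+\deg z^j-(2\eta_k+2)-4$, which is the claimed degree $-(2\eta_k+2)-4$.

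The only non-combinatorial ingredient here is Proposition \ref{mat poly}: that the inverse $\widetilde F_{\alpha\beta}$ of a matrix of linear functions remains, after restriction to $\widetilde Q$, polynomial and honestly quasihomogenous. Granting that, the rest is forced by additivity of the Dynkin grading, the crucial structural feature being that in every term of the Dirac formulas the grading weights carried by the contracted transverse (Greek) indices cancel — through the pairing of $\widetilde F^{I\beta}$ with $\widetilde F_{\beta\alpha}$ and the constraint $\mu_\beta=-\mu_I$ imposed by a nonzero $\widetilde g^{I\beta}$ — so that the total degree of each term is controlled by the free indices $i,j$ (and $k$) alone. The same input also guarantees that $g^{ij}$ and $\Gamma^{ij}_k$ have polynomial entries, as implicitly required by the statement.
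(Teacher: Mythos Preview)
Your proof is correct and follows essentially the same route as the paper's own argument: both read the degrees directly off the Dirac reduction formulas \eqref{finite Poiss brac}--\eqref{finite Poiss brac3}, using that $\widetilde F^{IJ}$ carries degree $\mu_I+\mu_J+2$, that $\widetilde g^{IJ}$ is constant antidiagonal (hence forces $\mu_I+\mu_J=0$), and that Proposition \ref{mat poly} supplies the polynomiality and degree of $\widetilde F_{\alpha\beta}$. The only cosmetic differences are that the paper cites \cite{DamSab} for the $F^{ij}$ case rather than computing it, and substitutes the antidiagonal relation $\widetilde g^{I\beta}=C^I\delta^I_{\gdim-\beta+1}$ explicitly where you instead use the constraint $\mu_\beta=-\mu_I$; the degree-counting is otherwise identical.
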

\begin{proof}
The statement about the quasihomogeneity of  the matrix $F^{ij}(z)$ was proved in \cite{DamSab}. We will derive the quasihomogeneity of $g^{ij}(z)$. We know that the matrix $\widetilde g^{IJ}$ is constant antidiagonal. Hence we can write  $\widetilde g^{IJ}=C^I \delta^I_{\gdim-J+1}$ where $C^I$ are nonzero constant. In particular, we have $\widetilde g^{ij}(z)=0$. Now  consider the expression \eqref{finite Poiss brac2}. Then  for a fixed $i$ we have
\beq \label{ass eq 1}\widetilde{g}^{i\beta}\widetilde{ F}_{\beta\alpha}\widetilde{F}^{\alpha
j}= C^i \widetilde{ F}_{\gdim -i+1,\alpha}\widetilde{F}^{\alpha
j}.\eeq
Hence, the left hand sight is quasihomogenous of degree \[\mu_j+\mu_\alpha+2 - \mu_\alpha-(-\mu_i)-2=\mu_j+\mu_i=2\eta_i+2\eta_j.\]
A similar argument shows that  $ \widetilde{F}^{i\beta} \widetilde{F}_{\beta \alpha} \widetilde{g}^{\alpha j}$ is quasihomogenous of degree $2\eta_i+2\eta_j$. Finally, the summation
\beq \label{ass eq 2}\widetilde{F}^{i \beta}\widetilde{F}_{\beta \alpha} \widetilde{g}^{\alpha \varphi} \widetilde{F}_{\varphi
\gamma} \widetilde{F}^{\gamma j}=\sum_\alpha C^{\alpha} \widetilde{F}^{i \beta}\widetilde{F}_{\beta \alpha} \widetilde{F}_{ \gdim-\alpha+1,
\gamma} \widetilde{F}^{\gamma j}. \eeq
Then, it has the degree
\[\mu_i+\mu_\beta+2-\mu_\beta-\mu_\alpha-2-\mu_{\gdim-\alpha+1}-\mu_\gamma-2+\mu_\gamma+\mu_j+2=2 \eta_i+2\eta_j.\]
This proves that $g^{ij}(z)$ is quasihomogenous of degree $-4$. For the last statement in the proposition we observe that the formula for  ${\Gamma}^{ij}_{2;k}(z(x))$ is given by
\begin{equation}
{\Gamma}^{ij}_{k}(z(x)) =-\big(\wdt g^{i\beta} - \wdt F^{i\lambda} \wdt F_{\lambda \alpha} \wdt g^{\alpha \beta} \big)\partial_{z^k} (\wdt F_{\beta \varphi} \wdt F^{\varphi j}).
\end{equation}
Hence the calculation of quasihomogeneity will be same as equations \eqref{ass eq 1} and \eqref{ass eq 2} with  subtracting $2\eta_{k}+2$. This complete the proof.
\end{proof}

\subsection{Subregular classical $W$-algebra}

 Let us consider the Poisson bracket $\{.,.\}^{\widetilde{Q}}$ in  Slodowy  coordinates $(t^1,...,t^n)$ and write
  \begin{eqnarray}\label{reducedPB notations in t}
  \{t^i(x),t^j(y)\}^{[-1]} &=& F^{ij}(t(x))\delta(x-y) \\\nonumber
  \{t^i(x),t^j(y)\}^{[0]} &=& g^{ij}(t(x)) \delta' (x-y)+ \Gamma_{k}^{ij}(t(x)) t_x^k \delta (x-y)\\\nonumber
   \end{eqnarray}

\begin{prop} \label{walg1} The minor matrix  $\partial_{t^{r-1}} g^{mn}_1(t),~m,n=1,\ldots,r$ is nondegenerate and its determinant is equal to the determinant of the matrix ${1\over \rho} A_{ij}$. In particular, the minor matrix $g^{mn},~m,n=1,\ldots,r$ is generically nondegenerate. Moreover, we have the same  identities which defines classical $W$-algebras, i.e
\begin{eqnarray}\label{leading terms2}
\{t^1(x),t^1(y)\}^{\widetilde{Q}}&=& \eps \delta^{'''}(x-y) +2 t^1(x) \delta'(x-y)+ t^1_x\delta(x-y) \\\nonumber
\{t^1(x),t^i(y)\}^{\widetilde{Q}} &=& (\eta_i+1) t^i(x) \delta'(x-y)+ \eta_i t^i_x \delta(x-y).
\end{eqnarray}
\end{prop}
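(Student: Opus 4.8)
The plan is to deduce the statement from the behavior under the change of coordinates $z^i \mapsto t^i$ given by Proposition~\ref{sldwy coord}, together with the results already established for the $z$-coordinates. First I would recall from equation \eqref{inv gene} that for $i>0$ we have $t^i = z^i + (\textrm{nonlinear terms})$, and that the degree assignments agree (degree $t^i$ equals degree $z^i$). The key observation is that the transition map $(z^1,\dots,z^{r+2})\mapsto(t^1,\dots,t^{r+2})$ is quasihomogeneous and triangular with respect to the total order on the weights: since $t^i$ with $i>0$ is $z^i$ plus terms that are nonlinear (hence products of coordinates of strictly lower degree, together with their $x$-derivatives, by Lemma~\ref{lin inv poly}), the Jacobian matrix $\partial t^i/\partial z^j$ is upper triangular with $1$'s on the diagonal when the coordinates are ordered by increasing degree, plus it is strictly lower triangular in its off-diagonal part only among coordinates of equal degree. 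I would make precise that $\partial t^m / \partial z^{r-1}$ (for $m \neq r-1$, $m \le r$) has degree strictly less than that of $z^{r-1}$ minus that of $t^m$, and in particular vanishes when $\eta_m < \kappa$, while $\partial t^{r-1}/\partial z^{r-1} = 1$.

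Next I would carry out the tensorial transformation of the leading-order Poisson bracket. Under the coordinate change, $g^{mn}(t) = \frac{\partial t^m}{\partial z^I}\frac{\partial t^n}{\partial z^J} g^{IJ}(z)$, and applying $\partial_{t^{r-1}}$ — which is a combination of $\partial_{z^J}$'s with $\partial z^{r-1}/\partial t^{r-1}=1$ on the leading term — produces $\partial_{t^{r-1}} g^{mn}(t)$. The crucial point, exactly as in Proposition~\ref{nondeg} and Lemma~\ref{homg of g1}, is that differentiating with respect to the coordinate $z^{r-1}$ of the lowest weight vector $a = X^{r-1}_\kappa$ lowers the quasihomogeneity degree by $2\kappa+2$, so $\partial_{t^{r-1}} g^{mn}_1(t)$ is quasihomogeneous of degree $2\eta_m + 2\eta_n - 2\kappa - 2$; this forces it to vanish when $\eta_m + \eta_n < \kappa+1$, to be constant when $\eta_m + \eta_n = \kappa + 1$, and the triangular structure of the coordinate change (together with the matching of degrees) ensures that the antidiagonal entries are unchanged from the $z$-coordinate computation. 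Hence $\partial_{t^{r-1}} g^{mn}_1(t)$ has the same lower-antidiagonal form with respect to $Et(e)$ as $g^{mn}_1(z)$, and by Proposition~\ref{opp Cartan prop} its determinant is $\det\left(\frac{1}{\rho}A_{ij}\right)$, which is nonzero. Generic nondegeneracy of $g^{mn}(t)$ itself then follows since its derivative in $t^{r-1}$ is a nonzero-determinant matrix, so $g^{mn}(t)$ cannot be identically degenerate.

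Finally, for the classical $W$-algebra identities \eqref{leading terms2}, I would argue that the relations involving $t^1$ and $t^i$ are coordinate-independent in the relevant sense: $t^1 = z^1$ is the Virasoro density (it carries degree $4$, i.e. $\eta_1 = 1$, and by Lemma~\ref{lin inv poly} there is no nonlinear correction that could mix it with anything — indeed $z^1$ is already the distinguished generator), and each $t^i$ is quasihomogeneous of weight $\eta_i$, so the Poisson bracket $\{t^1(x), t^i(y)\}$ transforms as a primary field of conformal weight $\eta_i + 1$. Concretely, since $t^1 = z^1$ and $t^i = z^i + (\textrm{lower-degree nonlinear terms in } z^j, j\neq r-1, \textrm{ and their derivatives})$, one checks using the already-known brackets of Proposition~\ref{walg} and the Leibniz rule that the nonlinear corrections contribute only to lower-order $\delta$-terms which are forced to vanish by degree counting, leaving exactly \eqref{leading terms2}; alternatively this is immediate from the defining property of classical $W$-algebras in \cite{fehercomp} since the $t^i$ are a quasihomogeneous generating set with $t^1$ the Virasoro density.

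\textbf{Main obstacle.} The step I expect to require the most care is showing that the triangular, degree-preserving coordinate change $z \mapsto t$ does not alter the antidiagonal entries of $\partial_{t^{r-1}} g^{mn}_1$ — one must check that the only terms in the tensorial transformation law that survive the degree constraint $\eta_m + \eta_n = \kappa+1$ are those coming from the identity part of the Jacobian, which uses both the explicit structure of the nonlinear terms in \eqref{inv gene} (they involve no $q^\kappa_{r-1}$, by Lemma~\ref{lin inv poly}) and the fact that coordinates of equal degree are indexed so that the off-diagonal Jacobian entries among them cannot contribute to the antidiagonal of the metric after applying $\partial_{t^{r-1}}$.
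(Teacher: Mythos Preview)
Your approach is correct and close in spirit to the paper's, but the two parts are handled somewhat differently.

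For the nondegeneracy of $\partial_{t^{r-1}} g^{mn}$, the paper is much terser than you: it simply observes that the computation in Proposition~\ref{nondeg} used only the \emph{linear} parts of the invariants $z^i$ (as differential polynomials in the $q$'s), and since by Proposition~\ref{sldwy coord} the $t^i$ have the same linear parts, the identical computation applies verbatim in the $t$-coordinates. Your route via the tensorial transformation law $g^{mn}(t)=\partial_{z^I}t^m\,\partial_{z^J}t^n\,g^{IJ}(z)$ together with $\partial_{t^{r-1}}=\partial_{z^{r-1}}$ is a correct alternative, and the ``main obstacle'' you flag is genuine but easily dispatched: since the Jacobian $\partial t^m/\partial z^I$ is $\delta^m_I$ plus terms of strictly positive degree and does not depend on $z^{r-1}$, the degree-zero (i.e.\ antidiagonal) part of $\partial_{z^{r-1}}g^{mn}(t)$ coincides with that of $g_1^{mn}(z)$.

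For the $W$-algebra identities \eqref{leading terms2}, the paper gives a cleaner and more explicit argument than your primary-field/degree-counting sketch. It notes that in the $z$-coordinates $g^{1b}(z)=(\eta_b+1)z^b$, which means the first row of the metric is the Euler vector field $E'=\sum_b(\eta_b+1)z^b\partial_{z^b}$; then, using $t^1=z^1$, the transformation law gives directly
\[
g^{1j}(t)=\partial_{z^a}t^1\,\partial_{z^b}t^j\,g^{ab}(z)=\partial_{z^b}t^j\,(\eta_b+1)z^b=E'(t^j)=(\eta_j+1)t^j,
\]
and a parallel one-line computation with the connection transformation law yields $\Gamma^{1j}_k(t)=\eta_j\delta^j_k$. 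This replaces your Leibniz-rule expansion by a single application of quasihomogeneity, and is worth adopting.
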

\begin{proof}
The nondegeneracy statements follows   from the fact that the proof of proposition \ref{nondeg}  depends only on the linear terms of the invariant differential polynomials $z^i$ (see proposition \ref{sldwy coord}). For the second part of the statement,  we need only to show that  \begin{equation}
g^{1,n}(t)= (\eta_i+1) t^i, ~ \Gamma^{1j}_{k}(t)=\eta_j \delta^j_k.
\end{equation}
 Note that, from  proposition \ref{walg}, we have
\begin{equation}
g^{1,n}(z)= (\eta_i+1) z^i, ~ \Gamma^{1j}_{ k}(z)=\eta_j \delta^j_k.
\end{equation}

If we introduce  the Euler vector field
\beq
E':=\sum_i (\eta_i+1) z^i { \partial_{z^i}}.
\eeq
Then the formula for change of coordinates   gives
\beq
g^{1j}(t)={\partial_{z^a} t^1 } {\partial_{z^b} t^j}~ g_2^{a b}(z)= E'(t^j)=(\eta_j+1) t^j.
\eeq
Where the last equality comes from quasihomogeneity of the coordinates $t^i$. For    $\Gamma^{1j}_{k}(z)$, the change of coordinates has the following formula
\beq
\Gamma^{ij}_{k}(t) d t^k=\Big({\partial_{z^a} t^i } {\partial_{z^c}\partial_{z^b} t^j} g_2^{a b}(z)+ {\partial_{z^a} t^i} {\partial_{z^b} t^j  } \Gamma^{a b}_{c}(z)\Big) d z^c.
\eeq
But then we get
\begin{eqnarray}
\Gamma^{1j}_{k} d t^k&=&\Big( E' ({\partial_{z^c} t^j  })+  {\partial_{z^b} t^j  } \Gamma^{1 b}_{c}\Big) d z^c\\\nonumber
&=& \Big( (\eta_j-\eta_c){\partial_{z^c} t^j  }+  \eta_c {\partial_{z^c} t^j  } \Big) d z^c=\eta_j {\partial_{z^c} t^j } d z^c =\eta_j d t^j
\end{eqnarray}

\end{proof}
The following theorem was proved in \cite{DamSab} using Slodowy coordinates
\begin{thm}\label{TPS}
 The  matrix $F^{ij}(t)$ is a constant multiple of the matrix
\beq\begin{pmatrix}
  0 & 0 &  \\
  0 &  \Omega \\
\end{pmatrix}
\eeq
where $\Omega$ is a $3 \times 3$ matrix of the form
\beq\begin{pmatrix}

  0& {\partial t^0 \over \partial t^{r+2}} & -{\partial t^0\over \partial t^{r+1}} \\
 -{\partial  t^0\over \partial t^{r+2}} & 0 & {\partial t^0\over \partial t^{r}} \\
 {\partial  t^0\over \partial t^{r+1}} &- {\partial  t^0\over \partial t^{r}} & 0 \\
\end{pmatrix}
\eeq
 where  $t^0$ is the restriction to $Q$ of the  invariant polynomial  $\chi^0$  defined after proposition \ref{sldwy coord}.
\end{thm}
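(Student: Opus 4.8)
The plan is to realize $F^{ij}$ as the transverse Poisson structure on the Slodowy slice $Q=e+\g^{f}$ and to read off its shape from its Casimirs.

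First I would note, via Proposition~\ref{dirac red}, that the leading term $F^{ij}(t)\delta(x-y)$ of $\{.,.\}^{\widetilde{Q}}$ is the Dirac (Poisson) reduction of the finite Lie--Poisson structure $\widetilde F^{IJ}$ on $\g\cong\g^{*}$ to $Q$, that is, the transverse Poisson structure to the adjoint orbit $G\cdot e$. Two standard facts about it are used. Since the Casimirs of the Lie--Poisson structure on $\g$ are generated by the invariants $\chi^{0},\dots,\chi^{r-1}$ \cite{MRbook}, and Dirac reduction carries ambient Casimirs to Casimirs of the reduced bracket (extend a Casimir by itself), the restrictions $t^{0},t^{1},\dots,t^{r-1}$ to $Q$ are Casimirs of $F^{ij}$. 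And by the transversality property of the Slodowy slice \cite{sldwy1}, a generic (regular semisimple) orbit $\orb$ meets $Q$ in a surface $\orb\cap Q$ of dimension $\dim\orb+\dim Q-\dim\g=2$, which is the generic symplectic leaf of the transverse structure; hence $F^{ij}$ has rank $2$ on a dense open subset of $Q$, in particular it is not identically zero.

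Next I would extract the block form. In Slodowy coordinates $t^{1},\dots,t^{r-1}$ are coordinate functions which are Casimirs, so $F^{ij}=\{t^{i},t^{j}\}^{[-1]}$ vanishes whenever $i\le r-1$ or $j\le r-1$; thus $F$ is supported on the $3\times 3$ block with indices in $\{r,r+1,r+2\}$, which, being skew, equals
\[
\Omega=\begin{pmatrix}0&\omega_{3}&-\omega_{2}\\ -\omega_{3}&0&\omega_{1}\\ \omega_{2}&-\omega_{1}&0\end{pmatrix},\qquad \omega_{1}=F^{r+1,r+2},\quad \omega_{2}=-F^{r,r+2},\quad \omega_{3}=F^{r,r+1}.
\]
At a point where its rank is $2$, $\ker\Omega$ is the line spanned by $(\omega_{1},\omega_{2},\omega_{3})$. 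Because $t^{0}$ is also a Casimir and $F^{ij}$ kills the first $r-1$ directions, for $j\in\{r,r+1,r+2\}$ one has $0=\{t^{0},t^{j}\}^{[-1]}=\sum_{k=r}^{r+2}(\partial_{t^{k}}t^{0})\,F^{kj}$, so $(\partial_{t^{r}}t^{0},\partial_{t^{r+1}}t^{0},\partial_{t^{r+2}}t^{0})\in\ker\Omega$. Since $\mathcal{N}\cap Q$ has an isolated singularity this gradient is generically non-zero, whence
\[
(\omega_{1},\omega_{2},\omega_{3})=\phi(t)\,(\partial_{t^{r}}t^{0},\ \partial_{t^{r+1}}t^{0},\ \partial_{t^{r+2}}t^{0})
\]
for a scalar function $\phi$, which is exactly the claimed form of $\Omega$ up to the factor $\phi$.

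Finally I would show $\phi$ is a non-zero constant. By Proposition~\ref{mat poly} together with \eqref{finite Poiss brac} the entries $F^{ij}$ are polynomial on $Q$, and by Proposition~\ref{homg of g2} they are quasihomogeneous; comparing, entry by entry, the weighted degree of $\Omega_{pq}$ with that of the corresponding first-order partial derivative of $t^{0}$ — these agree precisely because $\eta_{r}+\eta_{r+1}+\eta_{r+2}=\eta_{0}-1$ in the weight table — shows that $\phi$ is quasihomogeneous of weighted degree $0$. Combined with the fact that $\partial_{t^{r}}t^{0},\partial_{t^{r+1}}t^{0},\partial_{t^{r+2}}t^{0}$ have no common polynomial factor, again a consequence of the isolatedness of the singularity of $\mathcal{N}\cap Q$, this forces $\phi$ to be a polynomial of weighted degree $0$, i.e. the constant in the statement (non-zero since $F^{ij}\not\equiv 0$). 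The delicate point, and the place where the fine structure of the semiuniversal deformation of the simple singularity really enters, is this last step: the weighted-degree bookkeeping and the coprimality of the partials of the deformed invariant $t^{0}$; this is the content of the computation carried out in Slodowy coordinates in \cite{DamSab}, which one may alternatively quote outright.
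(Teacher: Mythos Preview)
The paper does not supply its own proof of this theorem; it simply attributes the result to \cite{DamSab} (``The following theorem was proved in \cite{DamSab} using Slodowy coordinates''). Your sketch is therefore more than what the paper itself provides, and it is in fact an accurate outline of the Damianou--Sabourin--Vanhaecke argument you cite at the end: identify $F^{ij}$ with the transverse Poisson structure on $Q$, use that the restricted invariants $t^0,\dots,t^{r-1}$ are Casimirs to obtain the block form, use the generic rank~$2$ of the transverse structure to match the kernel of $\Omega$ with the gradient of the remaining Casimir $t^0$, and finish with a quasihomogeneity count. Your observation that the degree bookkeeping closes precisely because $\eta_r+\eta_{r+1}+\eta_{r+2}=\eta_0-1$ (which one checks case by case from the weight table) is the right way to see why $\phi$ has weighted degree~$0$; combined with polynomiality of $F^{ij}$ and the coprimality of the three partials of $t^0$ --- both of which you correctly flag as the delicate points handled in \cite{DamSab} --- this forces $\phi$ to be a nonzero constant. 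In short, your proposal is correct and coincides with the approach the paper is quoting.
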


Let $N\subset Q$ be the hypersurface  of dimension $r$ defined as follows 

\beq\label{eqs of N}
N=\Big\{t\in Q:{ \partial t^0 \over \partial t^{r+2}} = {\partial t^0\over \partial t^{r+1}}=0\Big\}
\eeq

 From the quasihomogeneity of $t^0$ and the table in page  \pageref{plystable}, we observe that ${\partial t^0 \over \partial t^{r+2}}$ depends linearly on $t^{r+2}$ and  $ {\partial t^0\over \partial t^{r+1}}$ is a polynomial in $t^{r+1}$ of degree $\iota= r-2$ (resp. $\iota=2$) if  $\g$ is a Lie algebra of type $D_r$ (resp. $E_r$). In particular,  $(t^1,....,t^r)$ is well defined   coordinates on $N$.

\begin{thm}\label{subregwalg}
The Dirac reduction of $\{.,.\}^{\widetilde{Q}}$ to ${\widetilde{N}}=\lop {N}$ is well defined and gives a local Poisson brackets $\{.,.\}^{\widetilde{N}}$. The Poisson bracket $\{.,.\}^{\widetilde{N}}$ is a classical $W$-algebra. It admits a dispersionless limit and the leading term is a nondegenerate Poisson bracket of hydrodynamic type.
\end{thm}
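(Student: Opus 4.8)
The plan is to realize $N$ as a quasihomogenous coordinate subspace of $Q$, apply the Dirac reduction formula of Proposition~\ref{dirac fromula} to $\{.,.\}^{\widetilde Q}$, and then deduce the three assertions from Corollary~\ref{cor dirac} together with Theorem~\ref{TPS} and Proposition~\ref{walg1}. First I would change coordinates on $Q$, keeping $w^i=t^i$ for $i=1,\ldots,r$ and setting
\[
w^{r+1}=\frac{\partial t^0}{\partial t^{r+1}},\qquad w^{r+2}=\frac{\partial t^0}{\partial t^{r+2}}.
\]
Using the table on page~\pageref{plystable} and the quasihomogeneity of $t^0$ one checks that $\partial^2 t^0/(\partial t^{r+2})^2$ is a nonzero constant and that $\operatorname{Hess}_{(t^{r+1},t^{r+2})}t^0$ is a nonconstant polynomial; hence $(w^1,\ldots,w^{r+2})$ are quasihomogenous coordinates on a dense open subset of $Q$, in which $N=\{w^{r+1}=w^{r+2}=0\}$ and $(w^1,\ldots,w^r)=(t^1,\ldots,t^r)$ are coordinates on $N$. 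Throughout I use the index convention of Proposition~\ref{dirac fromula}: $i,j,k\in\{1,\ldots,r\}$ parameterise $N$ and $\alpha,\beta\in\{r+1,r+2\}$.

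The key computation will be the leading matrix $F$ of $\{.,.\}^{\widetilde Q}$ in the coordinates $w$, restricted to $N$. By Theorem~\ref{TPS}, $F^{ij}(t)$ vanishes unless both indices lie in $\{r,r+1,r+2\}$, with $F^{r,r+1}\propto\partial t^0/\partial t^{r+2}$, $F^{r,r+2}\propto\partial t^0/\partial t^{r+1}$ and $F^{r+1,r+2}\propto\partial t^0/\partial t^{r}$. Applying the chain rule with $\partial w^\alpha/\partial t^B=\partial^2 t^0/(\partial t^{\alpha}\,\partial t^{B})$ and using that $\partial t^0/\partial t^{r+1}=\partial t^0/\partial t^{r+2}=0$ on $N$, I would obtain: $F^{ij}(w)=F^{ij}(t)=0$ for all $i,j\le r$; $F^{i\alpha}(w)=0$ on $N$ for all $i\le r$; and the minor $F^{\alpha\beta}(w)$ on $N$ is the $2\times2$ antisymmetric matrix whose essential entry is $\big(\operatorname{Hess}_{(t^{r+1},t^{r+2})}t^0\big)\,F^{r+1,r+2}$, which by the explicit form of $f$ in the table does not vanish identically on $N$.

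From these facts the theorem follows. Since $F^{\alpha\beta}$ is nondegenerate on a dense open subset of $N$, Proposition~\ref{dirac fromula} shows that the Dirac reduction of $\{.,.\}^{\widetilde Q}$ to $\widetilde N=\lop N$ is well defined and yields a local Poisson bracket $\{.,.\}^{\widetilde N}$; and since $F^{i\alpha}=0$ on $N$, Corollary~\ref{cor dirac} gives that the leading terms are unchanged, $\widetilde{F}^{ij}=F^{ij}$, $\widetilde{g}^{ij}=g^{ij}$, $\widetilde{\Gamma}^{ij}_k=\Gamma^{ij}_k$ on $N$ for $i,j,k\le r$. But $F^{ij}=0$ for $i,j\le r$, so $\widetilde{F}^{ij}=0$ and $\{.,.\}^{\widetilde N}$ has trivial $[-1]$-term, hence admits a dispersionless limit; by Proposition~\ref{walg1} the matrix $\widetilde{g}^{ij}=g^{ij}$ is generically nondegenerate on $N$, so $\{t^i(x),t^j(y)\}^{[0]}_{\widetilde N}=\widetilde{g}^{ij}\,\delta'(x-y)+\widetilde{\Gamma}^{ij}_k t^k_x\,\delta(x-y)$ is a nondegenerate Poisson bracket of hydrodynamic type. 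Finally, Corollary~\ref{cor dirac} and the identities of Proposition~\ref{walg1} show that $t^1$ still obeys the Virasoro relation and the $t^i$ remain primary of weight $\eta_i$, i.e.\ $\{.,.\}^{\widetilde N}$ satisfies \eqref{leading terms2}, so it is a classical $W$-algebra.

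The main obstacle is this last point: Corollary~\ref{cor dirac} only controls the $[0]$-terms, while the full classical $W$-algebra structure also requires the dispersive term $\eps\,\delta'''(x-y)$ in $\{t^1(x),t^1(y)\}^{\widetilde N}$ and the absence of $\eps$-corrections in the primary-field relations. To close this I would show that the fields $w^{r+1},w^{r+2}$ are primary with respect to the Virasoro density $t^1$ of $\{.,.\}^{\widetilde Q}$, of weights $\eta_0-\eta_{r+1}$ and $\eta_0-\eta_{r+2}$ by the quasihomogeneity of $t^0$; then each factor $\{t^1,w^\alpha\}^{\widetilde Q}$ occurring in the Dirac correction is a differential-polynomial multiple of $w^\alpha$ and its $x$-derivatives and so vanishes on $\widetilde N$, whence $\{t^1(x),\,\cdot\,\}^{\widetilde N}$ coincides with $\{t^1(x),\,\cdot\,\}^{\widetilde Q}$ on the reduced generators. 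Alternatively, as indicated in item~(3) of the introduction, one may re-derive $\{.,.\}^{\widetilde N}$ directly as a Drinfeld--Sokolov reduction, which exhibits the classical $W$-algebra structure at once; this is the route I would ultimately prefer.
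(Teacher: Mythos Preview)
Your proposal follows essentially the same route as the paper: invoke Theorem~\ref{TPS} to identify the leading matrix $F^{ij}$, check the hypotheses of Proposition~\ref{dirac fromula} and Corollary~\ref{cor dirac}, and then read off the dispersionless limit, nondegeneracy, and $W$-algebra identities from Proposition~\ref{walg1}.

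The one substantive difference is your change of coordinates to $w^{r+1}=\partial t^0/\partial t^{r+1}$, $w^{r+2}=\partial t^0/\partial t^{r+2}$. The paper avoids this entirely by working directly in the $t$-coordinates with $\alpha\in\{r+1,r+2\}$: by Theorem~\ref{TPS} the only possibly nonzero entries $F^{i\alpha}$ with $i\le r$ are $F^{r,r+1}\propto\partial t^0/\partial t^{r+2}$ and $F^{r,r+2}\propto\partial t^0/\partial t^{r+1}$, and these are \emph{precisely} the defining equations~\eqref{eqs of N} of $N$, so $F^{i\alpha}|_N=0$ automatically and Corollary~\ref{cor dirac} applies with no coordinate change. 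Your $w$-coordinates are a legitimate way to force $N$ into the literal setup of Proposition~\ref{dirac fromula}, but they make you recompute $F^{\alpha\beta}(w)$ via chain rule when the paper can just read off the $2\times2$ minor $\begin{pmatrix}0&\partial t^0/\partial t^r\\-\partial t^0/\partial t^r&0\end{pmatrix}$ directly.

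Your ``main obstacle'' about the dispersive tail is well observed: Corollary~\ref{cor dirac} indeed only controls the $[-1]$ and $[0]$ terms, so the survival of the $\eps\,\delta'''$ term in $\{t^1,t^1\}$ needs a separate word. The paper's own proof simply asserts this step (``From proposition~\ref{walg1}, this implies that $\{.,.\}^{\widetilde N}$ is a classical $W$-algebra'') without further justification, so you are being more careful than the paper here rather than missing something. Your proposed fix via primariness of the constraints is reasonable; note however that it is $t^{r+1},t^{r+2}$ themselves that are primary by~\eqref{leading terms2}, and it is those (not $w^{r+1},w^{r+2}$) you actually need if you follow the paper's $t$-coordinate route.
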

\begin{proof}
We observe that  the minor matrix
\beq\begin{pmatrix}
 0 & {\partial t^0\over \partial t^{r}} \\
- {\partial  t^0\over \partial t^{r}} & 0 \\
\end{pmatrix}
\eeq
of the matrix  $F^{ij}(t)$ is nondegenerate. Hence, from theorem \ref{dirac fromula}, it follows that the Dirac reduction of $\{.,.\}^{\widetilde{Q}}$ to ${\widetilde{N}}=\lop {N}$ is well defined and gives a local Poisson brackets $\{.,.\}^{\widetilde{N}}$. Let us write the reduced Poisson bracket on ${\widetilde{N}}$ in the form
\beq \{t^m(x),t^n(y)\}^{\widetilde{N}}=
\sum_{k=-1}^\infty \epsilon^k \{t^m(x),t^n(y)\}^{[k]}_{{\widetilde{N}}} \\\nonumber
\eeq
where
\begin{eqnarray}\label{reducedPB N}
   \{t^m(x),t^n(y)\}^{[-1]}_{{\widetilde{N}}} &=& \wht{F}^{mn}(t(x))\delta(x-y) \\\nonumber
  \{t^m(x),t^n(y)\}^{[0]}_{{\widetilde{N}}} &=& \wht g^{mn}(t(x)) \delta' (x-y)+ \wht\Gamma_{k}^{mn}(t(x)) t_x^k \delta (x-y).
\end{eqnarray}
Then, it follows from corollary \ref{cor dirac} that the entries  $\wht g^{mn}(t)$ and $\wht F^{ij}(t)$ equal $g^{mn}(t)$ and $F^{mn}(t)$, respectively, where  $t^{r+1}$ and $t^{r+2}$ are  solutions of equations \eqref{eqs of N}. From proposition \ref{walg1}, this implies that $\{.,.\}^{\widetilde{N}}$ is a classical $W$-algebra.  Moreover, from proposition \ref{TPS}, we have  $\wht F^{ij}=0$. Hence $\{.,.\}^{\widetilde{N}}$ admits a dispersionless  limit. Furthermore,  proposition \ref{walg1} implies that  $\wht g^{mn}(t(x))$ is generically nondegenerate. Hence, $\{t^m(x),t^n(y)\}^{[0]}_{{\widetilde{N}}}$ is  nondegenerate Poisson brackets of hydrodynamics type.
\end{proof}

  In addition to the fact that $\{.,.\}^{\widetilde{N}}$ gives a Frobenius structure. The construction  by considering  the theory of opposite Cartan subalgebra implies that it is very associated to the Drinfeld-Sokolov hierarchy obtained  in \cite{gDSh1} that $\{.,.\}^{\widetilde{Q}}$. Therefore, we call it \textbf{subregular classical $W$-algebra}.
  
\section{Algebraic Frobenius manifold}
In this section we obtain the promised algebraic Frobenius structure.

Let us consider, using the Dubrovin-Novikov theorem \ref{DN thm}, the  contravariant  metric $\wht g^{mn}(t)$ on $N$ and its Levi-Civita connection $\wht\Gamma_{k}^{mn}(t)$. From  proposition \ref{homg of g2}, these matrices  are linear in $t^{r-1}$. Hence,  lemma \ref{flat pencil} implies that  the matrices
 \beq
\wht g^{mn}_{2}(t)= \wht g^{mn}(t),~~~~ \wht g^{mn}_{1}(t)=\partial_{t^{r-1}}\wht g^{mn}_{2}(t)
 \eeq
form  a flat pencil of metrics on $N$.

\begin{prop}
There exist    quasihomogenous polynomials coordinates of degrees $2 \eta_i+2 $ in the form
\[ s^i=t^i+T^i(t^1,...,t^{i-1})\] such that  the matrix $\wht g_1^{ij}(s)$   is  constant antidiagonal. Furthermore, in this coordinates the metric $g^{ij}_2(s)$ and its Levi-Civita connection have the following entries
\begin{equation}
g^{1,n}_2(s)= (\eta_i+1) s^i, ~ \Gamma^{1j}_{2 k}(s)=\eta_j \delta^j_k
\end{equation}
\end{prop}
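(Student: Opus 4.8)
The plan is to produce the coordinates $s^i$ as the flat coordinates of the metric $\wht g_1$, and then to read off the two displayed normal forms by a change of variables. Recall the input: by Lemma~\ref{flat pencil} the matrix $\wht g_1^{ij}(t)=\partial_{t^{r-1}}\wht g_2^{ij}(t)$ is a flat contravariant metric on $N$; by Proposition~\ref{homg of g2}, carried to $N$ via Corollary~\ref{cor dirac}, $\wht g_1^{ij}$ and its Levi-Civita connection are quasihomogeneous for the Euler field $E':=\sum_i(\eta_i+1)t^i\partial_{t^i}$, so $\Lie_{E'}\wht g_1=c\,\wht g_1$ for the appropriate constant $c$; and, since the proof of Proposition~\ref{nondeg} uses only the linear parts of the invariants (Proposition~\ref{sldwy coord}), the same argument applied on $N$ shows that $\wht g_1^{ij}(t)$ is lower antidiagonal with respect to $Et(e)$: it vanishes for $\eta_i+\eta_j<\kappa+1$, equals the constant nondegenerate block $\tfrac1\rho A_{ij}$ for $\eta_i+\eta_j=\kappa+1$, and is quasihomogeneous of strictly positive degree $2(\eta_i+\eta_j)-2(\kappa+1)$ for $\eta_i+\eta_j>\kappa+1$. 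In particular $\wht g_1^{ij}(0)=\tfrac1\rho A_{ij}$ is nondegenerate, and by Proposition~\ref{walg1} one has $\wht g_1^{1,n}(t)=\partial_{t^{r-1}}\big((\eta_n+1)t^n\big)=(\eta_n+1)\delta^n_{r-1}$.

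I would then pass to flat coordinates of $\wht g_1$. Flatness gives flat coordinates near $0\in N$, and since $\Lie_{E'}$ preserves the flat connection $\nabla_1$, the finite dimensional space of flat coordinates modulo constants is $E'$-invariant with $E'$ acting semisimply on it; hence one may take the flat coordinates $E'$-homogeneous. As $E'$ has only the positive weights $\eta_i+1$ and fixes the origin, an $E'$-homogeneous function of positive weight is automatically a quasihomogeneous \emph{polynomial} in $t^1,\dots,t^r$ (there are only finitely many monomials of a given weight) — so the potential algebraicity of $\wht g_1$ on $N$ does not pass to the flat coordinates. A weight count, using that a nonzero constant entry $\wht g_1^{ij}(s)$ has degree $0$ together with the pairing $\mu_j+\mu_{r-j+1}=\kappa+1$ of Proposition~\ref{opp Cartan prop}, forces the multiset of degrees of the flat coordinates to be $\{2\eta_i+2\}$ and makes $\wht g_1^{ij}(s)$ antidiagonal with respect to $Et(e)$. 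Finally, the linear part of a flat coordinate of degree $2\eta_i+2$ is a combination of the $t^j$ of equal degree, and this linear map is invertible because $\wht g_1^{ij}(0)$ is nondegenerate; applying its inverse to the equally graded flat coordinates brings them to the form $s^i=t^i+T^i(t^1,\dots,t^{i-1})$ with $T^i$ quasihomogeneous of degree $2\eta_i+2$, and with $T^1=0$ so $s^1=t^1$. In these coordinates $\wht g_1^{ij}(s)$ is constant and antidiagonal with respect to $Et(e)$, which is the first assertion.

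For the remaining two identities one repeats, with $z$ replaced by $t$ and $t$ by $s$, the computation in the proof of Proposition~\ref{walg1} (using Corollary~\ref{cor dirac} to carry $\wht g_2^{1,b}(t)=(\eta_b+1)t^b$ and $\wht\Gamma^{1b}_{2;k}(t)=\eta_b\delta^b_k$ to $N$, and $s^1=t^1$): the metric transformation gives $\wht g_2^{1,n}(s)=\partial_{t^b}s^n\,\wht g_2^{1,b}(t)=\partial_{t^b}s^n\,(\eta_b+1)t^b=E'(s^n)=(\eta_n+1)s^n$ by quasihomogeneity of $s^n$, and the transformation rule for the Levi-Civita connection then yields $\Gamma^{1j}_{2;k}(s)\,ds^k=\big((\eta_j-\eta_k)\partial_{t^k}s^j+\eta_k\partial_{t^k}s^j\big)dt^k=\eta_j\,ds^j$ via the same manipulation with the Euler field, i.e.\ $\Gamma^{1j}_{2;k}(s)=\eta_j\delta^j_k$.

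The heart of the argument is the middle step: that the flat coordinates of $\wht g_1$ can be chosen quasihomogeneous of degrees $2\eta_i+2$ and in the triangular polynomial form $t^i+T^i(t^{<i})$ relative to the Slodowy coordinates. This is where flatness of $\wht g_1$ is used essentially — and, as outlined in the introduction, flatness ultimately rests on the existence of the opposite Cartan subalgebra (Theorem~\ref{opposite}) — while the fact that the antidiagonal block of $\wht g_1$ is a nondegenerate \emph{constant} matrix is precisely what makes the recursive determination of the $T^i$ solvable degree by degree; the degree bookkeeping runs parallel to the polynomial case treated in \cite{mypaper1}.
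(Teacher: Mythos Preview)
Your approach is essentially the same as the paper's: for the first part the paper simply cites \cite{DCG}, whose argument is precisely the one you spell out (flatness of $\wht g_1$ plus quasihomogeneity yields $E'$-homogeneous flat coordinates, and the lower-antidiagonal structure with nondegenerate constant block lets one put them in the triangular form $s^i=t^i+T^i(t^{<i})$), and for the second part the paper refers back to the computation of Proposition~\ref{walg1}, which is exactly the change-of-variables calculation you reproduce. Your exposition is considerably more detailed than the paper's two-line proof; the only point to tighten is the sentence asserting that $E'$-homogeneity of positive weight forces the flat coordinates to be polynomial---this uses that the flat coordinates are analytic, which you should make explicit (the paper does not address this subtlety either).
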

\begin{proof}
The proof of the first part of the proposition is given in \cite{DCG} using the quasihomogeneity property of the matrix $\wht g^{mn}$. The second part is obtained in the same manor as in proposition \ref{walg}.
\end{proof}

We  assume without lost of generality that the  coordinates $t^i$ are the flat coordinates for $\wht g^{ij}_1$.
\begin{thm}\label{my thm}
The flat pencil of metrics given by $ \wht g^{mn}_{1}(t)$ and $ \wht g^{mn}_{2}(t)$  on the space  $N$ is regular quasihomogenous of degree $d={\kappa-1\over \kappa+1}$.
\end{thm}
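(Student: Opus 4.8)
The plan is to verify the four conditions in Definition \ref{def reg} for the flat pencil $(\wht g_1^{mn}, \wht g_2^{mn})$ on $N$, together with the nondegeneracy of the tensor \eqref{regcond}, taking $\tau = t^1$ as suggested by the pattern in \eqref{frob eqs}. First I would record the basic degree data: by Proposition \ref{homg of g2} the entries $\wht g_2^{mn}(t)$ are quasihomogeneous of degree $-4$ in the sense that $\wht g_2^{mn}$ has weight $\mu_m+\mu_n+(-4) = 2\eta_m+2\eta_n-4$, and $\wht\Gamma_{2;k}^{mn}$ has weight $2\eta_m+2\eta_n-(2\eta_k+2)-4$; differentiating in $t^{r-1}$ (which has degree $2\kappa+2=2\eta_{r-1}+2$) drops each weight by $2\kappa+2$, so $\wht g_1^{mn}$ has weight $2\eta_m+2\eta_n-4-(2\kappa+2)$ and, by Proposition \ref{walg1}, is in fact the constant antidiagonal matrix with antidiagonal entry proportional to $A_{mn}$ (so $2\eta_m+2\eta_n=2\kappa+2$ on the support, confirming weight $-2$ for the constant, i.e. it genuinely is $\wht g_1$-flat-constant after the coordinate change of the preceding proposition; I assume that change has been made).

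Next I would identify the two vector fields. Since $\wht g_1^{ij}$ is the constant antidiagonal metric, $e^i = \wht g_1^{is}\partial_s t^1$ picks out the component dual to $t^1$ under the antidiagonal pairing; because $\eta_1$ is the smallest exponent and $\mu_r = \kappa+1-\mu_1$ is the largest, this is (a constant multiple of) $\partial_{t^r}$ — wait, more carefully, $t^1$ has degree $2\eta_1+2$ and pairs with the coordinate of complementary degree, which by Proposition \ref{opp Cartan prop} is $t^{r-1}$ (the one of degree $2\kappa+2$); hence $e = \nabla_1 t^1$ is a constant multiple of $\partial_{t^{r-1}}$, matching the normalization $d_{r-1}=1$, $e=\partial_{t^{r-1}}$. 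For the Euler field, $E^i = \wht g_2^{is}\partial_s t^1$, and the key input is Proposition \ref{walg1}: $g_2^{1,n}(t)=(\eta_n+1)t^n$ and $\Gamma^{1j}_{2;k}=\eta_j\delta^j_k$. This gives $E^i = g_2^{is}\partial_s t^1 = g_2^{1i} = (\eta_i+1)t^i$, so $E = \sum_i(\eta_i+1)t^i\partial_{t^i}$ is the expected Euler vector field. Property (1), $[e,E]=e$, is then immediate from these explicit coordinate expressions.

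For properties (2)--(4) I would compute Lie derivatives of the metrics along $E$ and $e$ using quasihomogeneity. Property (2), $\Lie_E(\,,\,)_2 = (d-1)(\,,\,)_2$: in coordinates $\Lie_E \wht g_2^{ij} = E^s\partial_s \wht g_2^{ij} - \partial_s E^i \wht g_2^{sj} - \partial_s E^j \wht g_2^{is}$; since $\wht g_2^{ij}$ is quasihomogeneous of weight $2\eta_i+2\eta_j-4$ under the grading with $\deg t^k = 2\eta_k+2$, Euler's relation gives $E^s\partial_s \wht g_2^{ij} = (2\eta_i+2\eta_j-4)\wht g_2^{ij}$ up to the normalization where $E = \sum(\eta_k+1)t^k\partial_{t^k}$ rescales the grading by a factor $2$ — so really $E^s\partial_s \wht g_2^{ij} = (\eta_i+\eta_j-2)\wht g_2^{ij}$, and $\partial_s E^i = (\eta_i+1)\delta^i_s$, whence $\Lie_E \wht g_2^{ij} = (\eta_i+\eta_j-2-(\eta_i+1)-(\eta_j+1))\wht g_2^{ij} = -4\,\wht g_2^{ij}$. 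Hmm — I need this to equal $(d-1)\wht g_2^{ij}$; so I would instead track the $(\,,\,)_2$ on $T^*M$ versus on $TM$ and fix the normalization of $\tau$ so that $d-1 = -\frac{4}{\kappa+1}\cdot\frac{\kappa+1}{?}$, i.e. rescale $E$ by $\frac{1}{\kappa+1}$ so that $\deg t^k$ becomes $\frac{2\eta_k+2}{\kappa+1}$ and $d_k = \frac{\eta_k+1}{\kappa+1}$; then $\Lie_E \wht g_2^{ij} = (d_i+d_j-\frac{4}{\kappa+1})\wht g_2^{ij}$ and one checks $d_i+d_j-\frac{4}{\kappa+1}$ equals $d-1 = \frac{\kappa-1}{\kappa+1}-1 = \frac{-2}{\kappa+1}$ exactly on the support $\eta_i+\eta_j = \kappa+1$... that is the computation for $\wht g_1$, not $\wht g_2$; for $\wht g_2$ one uses instead that the metric is not concentrated on an antidiagonal, so the correct statement is that $\Lie_E(\,,\,)_2 = (d-1)(\,,\,)_2$ holds as an identity of weight because every entry $\wht g_2^{ij}$ has the same "excess weight" $-4$ over $\mu_i+\mu_j$, which after the rescaling is $-\frac{4}{\kappa+1} = d - 1$ precisely when $d = \frac{\kappa-1}{\kappa+1}$. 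Property (3), $\Lie_e(\,,\,)_2 = (\,,\,)_1$, is exactly the statement that $\partial_{t^{r-1}}\wht g_2^{ij} = \wht g_1^{ij}$, which is Lemma \ref{flat pencil}/the definition of $\wht g_1$ (up to the constant identifying $e$ with $\partial_{t^{r-1}}$). Property (4), $\Lie_e(\,,\,)_1 = 0$, holds because $\wht g_1^{ij}$ is constant in the chosen coordinates.

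The main obstacle, and the step I would spend the most care on, is the regularity condition: showing $R_i^j = \frac{d-1}{2}\delta_i^j + (\nabla_1)_i E^j$ is nondegenerate on $N$. Here $(\nabla_1)_i E^j = \partial_i E^j + \wht\Gamma_{1;ik}^{\,\,j}E^k$ with $\wht\Gamma_1$ the (covariant) Christoffel symbols of the constant metric $\wht g_1$ — which vanish in the flat coordinates — so $R_i^j = \frac{d-1}{2}\delta_i^j + \partial_{t^i}E^j = \frac{d-1}{2}\delta_i^j + (\eta_j+1)\delta_i^j$ after rescaling; that is diagonal with entries $\frac{d-1}{2} + d_j = \frac{\kappa-1}{2(\kappa+1)}-\frac{1}{\kappa+1}... $ wait, $\frac{d-1}{2}+d_j = \frac{-1}{\kappa+1} + \frac{\eta_j+1}{\kappa+1} = \frac{\eta_j}{\kappa+1}$, which is nonzero precisely because every exponent $\eta_j$ in $Et(e)$ is strictly positive — and here I would invoke the explicit table of $Wt(e)$ on page \pageref{wtstable} to confirm $\eta_j \ge 1$ for all $j=1,\dots,r$ (note $\eta_r$ can be as small as $r-3$ for $D_r$, $2$ for $E_6$, etc., all positive). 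Thus $R$ is invertible, and the proof concludes. The delicate bookkeeping throughout is keeping the two gradings (the "physical" degree $2\eta_i+2$ of $t^i$ and the rescaled degree $\frac{\eta_i+1}{\kappa+1}$) consistent so that the charge comes out as $d = \frac{\kappa-1}{\kappa+1}$; I would state this normalization once at the start and carry it through uniformly.
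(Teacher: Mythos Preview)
Your approach is exactly the paper's: take $\tau$ proportional to $t^1$, read off $E$ and $e$ from the first row of $\wht g_2$ and $\wht g_1$ via Proposition~\ref{walg1}, verify the four axioms of Definition~\ref{def reg} by quasihomogeneity, and check regularity by observing $R_i^j=\frac{\eta_i}{\kappa+1}\delta_i^j$ with all $\eta_i>0$. Properties (1), (3), (4) and the regularity argument are fine.

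There is, however, a genuine computational error in your verification of property~(2). You assert that $\wht g_2^{ij}$ has quasihomogeneous weight $2\eta_i+2\eta_j-4$ in the grading $\deg t^k=2\eta_k+2$; this misreads Proposition~\ref{homg of g2}. The convention there is that ``degree $-4$'' means $\deg g_2^{ij}=\deg z^i+\deg z^j-4=(2\eta_i+2)+(2\eta_j+2)-4=2\eta_i+2\eta_j$, \emph{not} $\mu_i+\mu_j-4$. You can check this directly against the known entry $g_2^{1n}=(\eta_n+1)t^n$, which has degree $2\eta_n+2=2\eta_1+2\eta_n$. With the correct weight, and with the paper's normalization $\tau=\frac{1}{\kappa+1}t^1$ so that $E=\frac{1}{\kappa+1}\sum_i(\eta_i+1)t^i\partial_{t^i}$, one gets
\[
\Lie_E\,\wht g_2^{ij}=\frac{\eta_i+\eta_j}{\kappa+1}\wht g_2^{ij}-\frac{\eta_i+1}{\kappa+1}\wht g_2^{ij}-\frac{\eta_j+1}{\kappa+1}\wht g_2^{ij}=\frac{-2}{\kappa+1}\wht g_2^{ij},
\]
which is $d-1$ precisely for $d=\frac{\kappa-1}{\kappa+1}$. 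Your computation produced $\frac{-4}{\kappa+1}$, and then the claim ``$-\frac{4}{\kappa+1}=d-1$ precisely when $d=\frac{\kappa-1}{\kappa+1}$'' is itself an arithmetic slip (it gives $d=\frac{\kappa-3}{\kappa+1}$); the two errors happened to cancel. Fix the degree of $\wht g_2^{ij}$ to $2\eta_i+2\eta_j$ and take $\tau=\frac{1}{\kappa+1}t^1$ from the outset (this also makes $e=\partial_{t^{r-1}}$ on the nose rather than up to a constant), and the argument goes through cleanly.
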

\begin{proof}
In the notations of definition \ref{def reg} we take $\tau ={1\over \kappa+1}t^1$ then
\begin{eqnarray}
 E&=& g^{ij}_2 {\partial_{t^j} \tau  }~{\partial_{t^i}  }={1\over \kappa+1} \sum_{i} (\eta_i+1) t^i{\partial_{t^i} },\\\nonumber
 e &=&  g^{ij}_1 {\partial_{t^j} \tau }~{\partial_{t^i}  }={\partial_{t^{r-1}} }.
  \end{eqnarray}
We see immediately  that   \[ [e,E]=e\]
The identity \begin{equation} \Lie_e (~,~)_2 =
(~,~)_1 \end{equation} follows from  the fact that ${\partial_{t^{r-1}} }={\partial_{ z^{r-1}}}$. Then
\begin{equation}
\Lie_e(~,~)_1
=0.
\end{equation}
 is a consequence  from the quasihomogeneity of the matrix $g_1^{ij}$ (see lemma \ref{homg of g1}). We also obtain from proposition \ref{homg of g2} that
  \begin{equation}
\Lie_E(~,~)_2(dt^i,dt^j)= E(g^{ij}_2)-{\eta_i+1\over \kappa+1}g^{ij}_2-{\eta_j+1\over \kappa+1} g^{ij}_2={-2\over \kappa+1} g^{ij}_2.
\end{equation}
Hence,
\begin{equation} \Lie_E (~,~)_2 =(d-1) (~,~)_2
\end{equation}
It remains to prove the regularity condition. But the (1,1)-tensor is nondegenerate since it has the entries
\begin{equation}
  R_i^j = {d-1\over 2}\delta_i^j + {\nabla_1}_iE^j = {\eta_i \over \kappa+1} \delta_i^j.
\end{equation}
This complete the proof.
\end{proof}

Now we have all the tools to prove the following
\begin{thm}\label{main thm}
  The space $N$ has a natural  structure of algebraic Frobenius manifold   with charge  $\kappa-1\over \kappa+1$ and degrees $\eta_i+1\over \kappa+1$,~ $i=1,...,r$.
\end{thm}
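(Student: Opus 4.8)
The plan is to obtain the theorem as a corollary of Theorem~\ref{my thm} together with Dubrovin's Theorem~\ref{dub flat pencil}. By Theorem~\ref{my thm} the pair $\wht g^{mn}_1(t)$, $\wht g^{mn}_2(t)$ is a contravariant regular quasihomogeneous flat pencil of metrics on $N$ of degree $d=\frac{\kappa-1}{\kappa+1}$; Theorem~\ref{dub flat pencil} then directly produces a Frobenius structure on $N$ of charge $d=\frac{\kappa-1}{\kappa+1}$. It therefore remains to (i) read off the degrees $d_i$, (ii) confirm that the potential $\mathbb{F}$ is an algebraic function, and (iii) note that the whole construction is natural, i.e. expressible purely through the representation theory of $\A$.

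First I would identify the flat coordinates and the Euler field. Since the coordinates $t^i$ are taken flat for $\wht g^{ij}_1$, equation \eqref{frob eqs} gives $\eta^{ij}=\wht g^{ij}_1$ --- which by Lemma~\ref{homg of g1} is constant and antidiagonal with respect to $Et(e)$ --- together with $\wht g^{ij}_2=(d-1+d_i+d_j)\eta^{i\alpha}\eta^{j\beta}\partial_\alpha\partial_\beta\mathbb{F}$. From Theorem~\ref{my thm} the Euler and unit vector fields are $E=\frac{1}{\kappa+1}\sum_i(\eta_i+1)t^i\partial_{t^i}$ and $e=\partial_{t^{r-1}}$; matching $E$ with $\sum_i d_i t^i\partial_{t^i}$ (after rescaling $t^1$ if necessary so that $\tau=t^1$) yields $d_i=\frac{\eta_i+1}{\kappa+1}$, and since $\kappa=\eta_{r-1}$ one has $d_{r-1}=1$, consistent with the normalization built into Theorem~\ref{dub flat pencil}. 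This also makes the quasihomogeneity identity $\sum_i d_i t^i\partial_{t^i}\mathbb{F}=(3-d)\mathbb{F}$ hold.

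Next I would argue algebraicity. By Corollary~\ref{cor dirac} and Theorem~\ref{subregwalg} the reduced metric $\wht g^{mn}(t)$ is the restriction of $g^{mn}(t)$ after substituting for $t^{r+1}$ and $t^{r+2}$ the solutions of the defining equations \eqref{eqs of N}; as ${\partial t^0\over\partial t^{r+2}}$ is linear in $t^{r+2}$ and ${\partial t^0\over\partial t^{r+1}}$ is a polynomial of degree $\iota$ in $t^{r+1}$, these substitutions are algebraic functions of $(t^1,\dots,t^r)$. Hence $\wht g^{mn}(t)$ and its Levi-Civita connection have algebraic entries, and integrating the system \eqref{frob eqs} for $\mathbb{F}$ produces an algebraic potential. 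Naturalness is then checked by following the construction backwards: the $sl_2$-triple $\A$ fixes the decomposition \eqref{decompo}, the Slodowy slice, the hypersurface $N$, the opposite Cartan datum, and every matrix entering $\{.,.\}^{\widetilde{N}}$, so the Frobenius structure is manufactured entirely from the representation theory of $\A$.

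The hard part will not be the logical chain, which is short once Theorem~\ref{my thm} is available, but making rigorous the step from ``the metric $\wht g^{mn}$ is algebraic'' to ``the potential $\mathbb{F}$ is algebraic'': one must verify that passing to the flat coordinates for $\wht g^{ij}_1$ (via the triangular change $s^i=t^i+T^i(t^1,\dots,t^{i-1})$) keeps everything algebraic, and that solving \eqref{frob eqs} for $\mathbb{F}$ introduces only algebraic functions. I expect this to be controlled by the explicit value of $\iota$ and by the antidiagonal structure of $Et(e)$ from Proposition~\ref{opp Cartan prop}.
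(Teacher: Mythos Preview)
Your proposal is correct and follows essentially the same approach as the paper: invoke Theorem~\ref{my thm} and Theorem~\ref{dub flat pencil} to obtain the Frobenius structure of charge $\frac{\kappa-1}{\kappa+1}$, then argue algebraicity from the fact that $\wht g^{mn}_2$ is built via equations~\eqref{frob eqs} after substituting the algebraic solutions of~\eqref{eqs of N}. The paper's own proof is in fact considerably terser than yours---it does not spell out the computation of the degrees $d_i$ from the Euler field, nor does it dwell on the passage to flat coordinates---so your added detail (reading off $d_i=\frac{\eta_i+1}{\kappa+1}$ from $E$, and flagging the triangular change $s^i=t^i+T^i$) is a genuine elaboration rather than a deviation.
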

\begin{proof}
It follows from  theorem \ref{my thm} and \ref{dub flat pencil}  that $N$  has a Frobenius structure of degree $\kappa-1\over \kappa+1$. This Frobenius structure is algebraic  since in the coordinates  $t^i$ the potential $\mathbb{F}$ is constructed using   equations \eqref{frob eqs}. Besides we have from theorem \ref{subregwalg}  the matrix $\wht g^{mn}_2$  depends on the nontrivial solutions of equations \eqref{eqs of N}.
\end{proof}

\subsection{The algebraic Frobenius manifold of  $D_4(a_1)$}\label{D4}

  We verify the procedure, outlined in this work, of constructing algebraic Frobenius manifold when $\g$ is of type $D_4$.  For this end we choose  the  realization of $D_4$ as a subalgebra of  $gl_8(\mathbb{C})$ given in the appendix of \cite{DS}.  In this case it is easy to obtain a representation of $e$ which belongs to strictly lower diagonal matrices. In what follows we will denote by $\sigma_{i,j}$ the standard matrix defined by $(\sigma_{i,j})_{k,l}=\delta_{i,k} \delta_{j,l}\in gl_8(\mathbb{C})$.  We fix the subregular nilpotent element $e$ and the $sl_2$-triple $\{e,h,f\}$  as follows
\begin{eqnarray}
e &=&\sigma _{2,1}-\sigma _{3,1}+\sigma _{4,3}-\frac{\sigma _{5,2}}{2}+\sigma
   _{6,5}+\frac{\sigma _{7,4}}{2}+\sigma _{8,6}+\sigma _{8,7}
\end{eqnarray}

\begin{eqnarray}
h &=& -4 \sigma _{1,1}-2 \sigma _{2,2}-2 \sigma _{3,3}+2 \sigma _{6,6}+2 \sigma _{7,7}+4
   \sigma _{8,8}
\end{eqnarray}
\begin{eqnarray}
f &=&2 \sigma _{1,2}-2 \sigma _{1,3}-2 \sigma _{2,4}-8 \sigma _{2,5}+4 \sigma _{3,4}+4
   \sigma _{3,5}+4 \sigma _{4,6}\\\nonumber & & +8 \sigma _{4,7}+4 \sigma _{5,6}+2 \sigma _{5,7}+2
   \sigma _{6,8}+2 \sigma _{7,8}
\end{eqnarray}
We observe that $Wt(e)=\{1,3,3,1,1,3\}$ and $Et(e)=\{1,3,3,1\}$. We construct a  basis for $\g$  satisfy the hypotheses  of proposition \ref{reg:sl2:normalbasis} from the formula
\beq
X_I^i= {1\over (\eta_i+I)}{\rm ad}^{\eta_i+I} e~ X_{-\eta_i}^i,~~~ i=1,...,6.
\eeq
where the lowest root vectors   $X_{-\eta_i}^i$ are
\begin{eqnarray}
X^2_{-3} &=& 24 \sqrt{3} \sigma _{3,8}-24 \sqrt{3} \sigma _{1,6}\\\nonumber
X^3_{-3} &=& -24 \sigma_{1,6}-48 \sigma _{1,7}-48 \sigma _{2,8}+24 \sigma _{3,8}
\\\nonumber
X^4_{-1} &=&-4 \sqrt{\frac{3}{5}} \sigma _{1,2}-2 \sqrt{\frac{3}{5}} \sigma _{1,3}+2
   \sqrt{\frac{3}{5}} \sigma _{2,4}+2 \sqrt{\frac{3}{5}} \sigma _{3,4}-12
   \sqrt{\frac{3}{5}} \sigma _{3,5}\\\nonumber & &-12 \sqrt{\frac{3}{5}} \sigma _{4,6}+2
   \sqrt{\frac{3}{5}} \sigma _{5,6}-2 \sqrt{\frac{3}{5}} \sigma _{5,7}+2
   \sqrt{\frac{3}{5}} \sigma _{6,8}-4 \sqrt{\frac{3}{5}} \sigma _{7,8}
\\\nonumber
X^5_{-1} &=& -\frac{8 \sigma _{1,2}}{\sqrt{5}}-2 \sqrt{5} \sigma _{1,3}-2 \sqrt{5} \sigma
   _{2,4}+\frac{8 \sigma _{2,5}}{\sqrt{5}}+\frac{2 \sigma _{3,4}}{\sqrt{5}}-\frac{4
   \sigma _{3,5}}{\sqrt{5}}\\\nonumber& &-\frac{4 \sigma _{4,6}}{\sqrt{5}}-\frac{8 \sigma
   _{4,7}}{\sqrt{5}}+\frac{2 \sigma _{5,6}}{\sqrt{5}}+2 \sqrt{5} \sigma _{5,7}+2
   \sqrt{5} \sigma _{6,8}-\frac{8 \sigma _{7,8}}{\sqrt{5}}.
\\\nonumber
X^6_{-2} &=&-4 \sqrt{3} \sigma _{1,4}+8 \sqrt{3} \sigma _{1,5}+8 \sqrt{3} \sigma _{2,6}+8
   \sqrt{3} \sigma _{3,7}+8 \sqrt{3} \sigma _{4,8}-4 \sqrt{3} \sigma _{5,8} .
\end{eqnarray}
The opposite Cartan subalgebra $\h'$ have the following normalized basis
\begin{eqnarray}
y_1 &=&e+ X_{-3}^4 \\\nonumber
y_2 &=& -X^2_3-{3\over \sqrt{5}}X^4_{-1}-{1\over 5 \sqrt{3}} X^3_{-1}+{1\over 2} X^6_{-1}\\\nonumber
y_3 &=& -X^3_3+3 X_{-1}^1-{1\over 5 \sqrt{3}}X^2_{-1}+{3\over \sqrt{5}} X_{-1}^5\\\nonumber
y_4 &=&-X^4_1-{1\over \sqrt{5}} X_{-3}^2.
\end{eqnarray}
The matrix of the  restriction of  $\bil . .$ to $\h'$ under the order $\{y_1,y_4,y_2,y_3\}$ equals
\begin{equation}
\left(
\begin{array}{llll}
 0 & 0 & 0 & 4 \\
 0 & 0 & -\frac{4}{\sqrt{5}} & 0 \\
 0 & -\frac{4}{\sqrt{5}} & 0 & 0 \\
 4 & 0 & 0 & 0
\end{array}
\right)
\end{equation}
We write an element $z$ in Slodowy slice $ Q$ in the form
\beq
z=z_1 X_{-1}^1+ z_2 X_{-3}^3+ z_3 X_{-3}^4 +z_4 X_{-1}^2+ z_5 X_{-1}^5+ z_6 X_{-2}^6+ e.
\eeq
Here we lower the index for convenience.  Then the  restriction of the invariant polynomials to $Q$ can be found form the coefficients of the indeterminant $P$ in the equation $\det (z-P)$. After normalization we get the following Slodowy coordinates on $Q$
\begin{eqnarray}
t_1 & =& z_1\\\nonumber
t_2 & =& z_2-\frac{1}{2} \sqrt{3} z_1^2+\frac{4 z_5 z_1}{\sqrt{15}}+\frac{7 z_4^2}{5 \sqrt{3}}-\frac{7 z_5^2}{5 \sqrt{3}}\\\nonumber
t_3&=& z_3-\frac{3 z_1^2}{2}-\frac{4 z_4 z_1}{\sqrt{15}}-\frac{14 z_4 z_5}{5 \sqrt{3}}\\\nonumber
t_i&=&z_i , ~i=4,5,6.
\end{eqnarray}
We take the following as the  restriction to $Q$ of a highest degree invariant polynomial
\beq
\begin{split}
t_0=&20 t_1^3+18 \sqrt{15} t_4 t_1^2-18 \sqrt{5} t^5 t_1^2+60 t_4^2 t_1+60 t_5^2 t_1+6 \sqrt{3} t_2 t_1\\&+18 t_3 t_1-20 \sqrt{5} t_5^3-27 t_6^2+12 \sqrt{15} t_3 t_4+60
   \sqrt{5} t_4^2 t_5-12 \sqrt{15} t_2 t_5
\end{split}
\eeq
Note that in the case $t_i=0,~i=1,2,3$ we get the equation
\beq
f(t_4,t_5,t_6)=-20 \sqrt{5} t_5^3+60 \sqrt{5} t_4^2 t_5-27 t_6^2
\eeq
which define a simple hypersurface singularity of type $D_4$.

It follows that the leading term of the classical $W$-algebra on $\widetilde{Q}$ is given by
 \beq F^{ij}(t)=75 \begin{pmatrix}
  0 & 0 &  \\
  0 &  \Omega \\
\end{pmatrix}
\eeq
where $\Omega$ is a $3 \times 3$ matrix of theorem \ref{TPS}.
The hypersurface $N\subset Q$ is defined by the equations
\begin{eqnarray} {\partial t_0\over \partial t_{6}}&=&270t_6=0\\\nonumber
                 {\partial t_0\over \partial t_{5}}&=&-5 \left(-18 \sqrt{5} t_1^2+120 t_5 t_1+60 \sqrt{5} t_4^2-60 \sqrt{5} t_5^2-12 \sqrt{15} t_2\right)=0
\end{eqnarray}
We choose the  flat coordinates
\begin{eqnarray}
s_1&=&t_1\\\nonumber
s_2&=&t_2+ \frac{\sqrt{3} }{4} t_1^2-\frac{5\sqrt{3}{4}} t_4^2\\\nonumber
s_3&=&t_3+ \frac{3}{2}t_1^2+\frac{ \sqrt{15}}{2} t_4 t_1\\\nonumber
s_4&=&t_4
\end{eqnarray}
Then the potential $\mathbb{F}$ of the Frobenius structure  reads
\begin{eqnarray}
\mathbb{F}&=&\frac{Z}{180} \left(-\sqrt{5} s_1^4-10 \sqrt{5} s_4^2 s_1^2+8 \sqrt{15} s_2 s_1^2-25 \sqrt{5} s_4^4-48
   \sqrt{5} s_2^2+40 \sqrt{15} s_2 s_4^2\right)\\\nonumber
   &+&\frac{1}{2880} \Big(35 s_1^5+510 s_4^2 s_1^3-48 \sqrt{3} s_2 s_1^3+775 s_4^4 s_1+360 s_3^2 s_1\\\nonumber & &-720 \sqrt{5} s_2 s_3 s_4+~1128 s_2^2 s_1-1840 \sqrt{3}
   s_2 s_4^2 s_1\Big)
\end{eqnarray}
where $Z$ is a solution of the quadratic equation
\begin{equation}
Z^2-\frac{2}{\sqrt{5}}s_1 Z+\frac{3}{20}s_1^2-\frac{1}{4}s_4^2+\frac{\sqrt{3}}{5} s_2=0.
\end{equation}
It is straightforward to check validity of the WDVV equations for this potential. The identity vector field is ${\partial\over{\partial s_3}}$ and the quasihomogeneity reads
\begin{equation}
{1\over 2} s_1 {\partial \mathbb{F}\over \partial {s_1}}+  s_2 {\partial \mathbb{F}\over \partial {s_2}}+ s_3 {\partial \mathbb{F}\over \partial {s_3}}+{1\over 2} s_4 {\partial \mathbb{F}\over \partial {s_4}}={5 \over 2} \mathbb{F}.
\end{equation}

\section{Conclusions and remarks}

In this work we obtained infinite number of examples of algebraic Frobenius manifolds. These examples  correspond to the regular quasi-Coxeter conjugacy classes $D_r(a_1)$ where $r$ is even and $E_r(a_1)$. One of the tools we use is the structure of opposite Cartan subalgebra which relate the subregular nilpotent orbit to the conjugacy class. The structure of opposite Cartan subalgebra exists only for  regular conjugacy class. But taking the subregular nilpotent orbit in the Lie algebra $D_5$ we obtain algebraic Frobenius manifold related to the nonregular  quasi-Coxeter conjugacy class $D_5(a_1)$. This implies that the existence of algebraic Frobenius manifold is a far deeper than the notion of opposite Cartan subalgebra. This fact  will be the frame work of our future research. Our next step is to develop  a method  to uniform the construction of all algebraic Frobenius manifolds that could be obtained from  quasi-Coxeter conjugacy classes in Weyl groups.

In this work we give, for the first time, a geometric realization of algebraic Frobenius manifolds. The examples obtained are  certain  hypersurfaces in the total spaces of semi-universal deformations of simple hypersurface singularities. We hope this will rich the relation between Frobenius manifolds and singularity theory. Which one of its main contributions is the existence of  polynomial Frobenius structures   on the  universal unfolding of simple hypersurface singularities. For the definition and  deference between semiuniversal and unfolding see chapter 2 section 1 of \cite{Gruel}.

\noindent{\bf Acknowledgments.}

The author thanks  B. Dubrovin for useful discussions. This work was done primarily during the author  postdoctoral fellowship at the Abdus Salam International Centre for Theoretical Physics (ICTP). Italy.

\end{document}